 \def\AMSTeXfeatures{\Plainheads 
   \let\current@vert=\AMS@vert}
 \def\Plainheads{\sh@ftdiam=0.05em
   \getlabeldims
   \let\vshaftfill=\plnvsolidfill
   \let\hshaftfill=\plnhsolidfill
   \let\th@rhead=\plnrhead
   \let\th@lhead=\plnlhead
   \let\th@dnhead=\plndnhead
   \let\th@uphead=\plnuphead}
 \def\glet{\global\let}
 \def\LaTeXfeatures{\catcode`\@=11
   \ifx\@clnwd\undefined \nol@g
      \input ltxcode.tex \dol@g \fi
   \ltxheads \let\current@vert=\new@vert
   \providelto \catcode`\@=\active}
 \def\nol@g{\def\wlog{\edef\garbage}}
 \def\dol@g{\let\wlog=\wl@g} \let\wl@g=\wlog
 \newbox\ltobox
 \def\providelto{{\setbox\z@=
   \hbox{$\to$}\minharrlen=\wd\z@
   \global\setbox\ltobox=\hbox{$\activeat>>>$}}
   \def\lto{\mathrel{\copy\ltobox}}}
 \def\ltxheads{\sh@ftdiam=\@wholewidth
   \getlabeldims
   \let\vshaftfill= \ltxvsolidfill
   \let\hshaftfill=\ltxhsolidfill
   \let\th@rhead=\ltxrhead
   \let\th@lhead=\ltxlhead
   \let\th@dnhead=\ltxdnhead
   \let\th@uphead=\ltxuphead}
 {\catcode`\@=\active
   \gdef@#1{\csname #1\string@at\endcsname}
   \glet\activeat=@}
 \def\def@#1{\expandafter\def\csname #1@at\endcsname}
\varrow\vequalfill{}{}\fi}
  \def\AMS@vert{\varrow\vequalfill{}{}}
  \def\new@vert#1|#2|{\ifodd\row
   \let\nextarrow\vertexvarrow
   \else\let\nextarrow\varrow\fi
   \nextarrow\vshaftfill{#1}{#2}}
\let\next\AMSatdash \fi \next}
  \def\hl@ne#1-#2-{\harrow\hshaftfill{#1}{#2}}
  \def\AMSatdash{\let\next\relax\leavevmode
    \def\next@{\ifx\next-%
      \def\next-{\futurelet\next\nextii@}%
     \else\def\next{\hbox{-}}\fi\next}%
    \def\nextii@{\ifx\next-\def\next-{\hbox{---}}%
      \else\def\next{\hbox{--}}\fi\next}%
    \futurelet\next\next@}
 \def\fiberbox{\hbox{$\vcenter{\hr@le\hbox{\vr@le
   \kern1ex\vbox{\kern1.2ex}\vr@le}\hr@le}$}}
  \def\hr@le{\hrule height \sh@ftdiam}
  \def\vr@le{\vrule width \sh@ftdiam}
   \else \varrow{#1}{#2}{#3}\fi}
 \def\Dnarrfill{\vequalfill\Dnhe@d}
 \def\Uparrfill{\Uphe@d\vequalfill}
 \def\ontofill{\rtarrfill\kern-0.3em 
   \th@rhead\kern 0.3em} 
 \def\rtarrfill{\hshaftfill\th@rhead}
 \def\ltarrfill{\th@lhead\hshaftfill}
 \def\dnarrfill{\vshaftfill\th@dnhead}
 \def\uparrfill{\th@uphead\vshaftfill}
 \def\hequalfill{\plnhfill=}
 \def\deffill{:\plnhfill=}
 \def\plnvextfill#1{\setbox\z@
   \hbox{\the\textfont3 #1}%
   \dimen@=\dp\z@\advance\dimen@\ht\z@
   \copy\z@ \kern-\dimen@ 
   \cleaders\copy\z@ \vfill
   \kern-\dimen@ 
   \box\z@}
 \def\plnhfill#1{$\m@th\mkern-1.5mu\mathord#1\mkern-6mu
    \cleaders\hbox{$\mkern-2mu\mathord#1\mkern-2mu$}\hfill
    \mkern-6mu\mathord#1\mkern-1.5mu$}
 \def\vequalfill{\plnvextfill{\char'167}}
 \def\plnvsolidfill{\plnvextfill{\char'077}}
 \def\plnhsolidfill{\plnhfill-}
 \def\ltxhsolidfill{\leaders\hrule height\topofshaft depth\botofshaft
   \hfill}
 \def\ltxvsolidfill{\leaders\vrule width\sh@ftdiam\vfill}
 \def\hdashfill{\hd@sh\wd@sh
   \xleaders \hbox{\wd@sh\hd@sh\wd@sh}\hfill
   \wd@sh\hd@sh}
 \def\vdashfill{\vd@sh\wd@sh
   \xleaders \vbox{\wd@sh\vd@sh\wd@sh}\vfill
   \wd@sh\vd@sh}
 \def\dashed{\ifinmeasureCD\else
    \ifodd\row\option{\let\hshaftfill=\hdashfill}%
   \else\option{\let\vshaftfill=\vdashfill}\fi\fi}
 \newdimen\CDstrutht  \newdimen\CDstrutdp
 \newdimen\CDstrutlen \CDstrutlen=\CDstrutht
 \def\CDstrut{\vrule
   height \ifnum\row=1 \z@\else\CDstrutht \fi
   depth \ifnum\row=\numrows \z@ \else\CDstrutdp \fi
   width\z@}
 \newdimen\CDarrsurr \CDarrsurr=0.375em
 \newdimen\CDdashlen
 \newdimen\CDvarrlen \CDvarrlen=1.5\baselineskip
 \newdimen\minharrlen 
\z@\hbox{$\longrightarrow$} \minharrlen=\wd\z@
 \newdimen\minCDharrlen \minCDharrlen=2.5em 
\newdimen \minc@lwd
\def\findminc@lwd{\minc@lwd=2\CDarrsurr
  \advance\minc@lwd\minCDharrlen}
 \newdimen\sh@ftdiam
 \newdimen\labelsurr \labelsurr=1.25 em
\newdimen\@rrwd \newdimen\@rrdp
 \def\adjustbot#1{\option{\advance\@rrdp#1\relax}}
\def\pushvertex#1{\global\p@shlen#1\relax
   \global\let\maybepush=\dopush}
 \newdimen\p@shlen \p@shlen=\z@
 \let\maybepush=\relax
 \def\dopush{\ifinmeasureCD 
   \advance\locdimen by -\p@shlen 
   \else\advance \@rrwd by -\p@shlen \fi 
   \global\let\maybepush=\relax \global\p@shlen=\z@\relax}
 \def\span@ne{\global\sp@ncnt=\@ne\relax}
 \def\setsp@n#1#2{\global\sp@ncnt=#1\relax
   \ifx\relax#2\relax\else\global\sp@ncnt@=#2\relax\fi}
 \def\plnrhead{\llap{$\rightarrow\mkern-1.5mu$}}
 \def\plnlhead{\rlap{$\mkern-1.5mu\leftarrow$}}
 \def\clap#1{\hbox to \z@{\hss #1\hss}}
 \def\plndnhead{\hbox{\the\textfont3 \char'171}}
 \def\plnuphead{\hbox{\the\textfont3 \char'170}}
 \def\Dnhe@d{\hbox{\the\textfont3 \char'177}}
 \def\Uphe@d{\hbox{\the\textfont3 \char'176}}
 \def\ltxrhead{\raise\@xisheight
   \llap{\smash{\@linefnt\@getrarrow(1,0)}}}
 \def\ltxlhead{\raise\@xisheight
   \rlap{\@linefnt\@getlarrow(-1,0)}}
 \def\ltxuphead{\setbox\z@=\rlap{%
   \kern\@halfwidth\@linefnt\char'66}%
   \copy\z@\kern-\ht\z@}
 \def\ltxdnhead{\setbox\z@=\rlap{%
   \kern\@halfwidth\@linefnt\char'77}%
   \ht\z@=\z@\box\z@}
 \def\wd@sh{\kern0.5\CDdashlen}
 \def\hd@sh{\vrule height\topofshaft depth\botofshaft
    width\CDdashlen}
 \def\vd@sh{\hrule height\CDdashlen
   depth\z@ width\sh@ftdiam}
\def\xylist{14{3434}13{2414}12{1723}%
  23{1413}34{1153}11{0867}43{0707}%
  32{0580}21{0414}31{0291}41{0}}
\def\find@xyargs{\dimen@=\@rrdp
  \advance\dimen@ by \CDstrutlen
  \tgtcnt@=\dimen@ \dimen@=\@rrwd 
  \divide\dimen@ by \@m 
  \divide \tgtcnt@ by \dimen@ 
  \expandafter\testxy\xylist\relax
  \unitlength=\@xarg\@rrdp
  \divide\unitlength by\@yarg\relax}
\def\testxy#1#2#3{\ifnum\tgtcnt@>#3
    \@xarg=#1\relax \@yarg=#2\relax
    \let\next=\ignorerest
  \else\let\next\testxy\fi\next}
\def\ignorerest#1\relax{\relax}
\let\scalefactor=\@ne
\def\SWarrow{\find@xyargs\vector
  (-\@xarg,-\@yarg)\scalefactor\hskip-\wd\@linechar}
\def\NWarrow{\find@xyargs\vector
  (-\@xarg,\@yarg)\scalefactor\hskip-\wd\@linechar}
\def\NEarrow{\find@xyargs\vector
  (\@xarg,\@yarg)\scalefactor}
\def\SEarrow{\find@xyargs\vector
  (\@xarg,-\@yarg)\scalefactor}
\def\rightupline{\find@xyargs\@linelen=\scalefactor
     \unitlength\@sline}
\def\rightdownline{\find@xyargs\@yarg=-\@yarg\relax
     \@linelen=\scalefactor\unitlength\@sline}
\def\Sim{\ifodd\row\setbox\z@=\hbox{$\sim$}\dimen@=\ht\z@
 \advance\dimen@ by -\@xisheight
  \vbox{\box\z@\kern-\@xisheight\kern\dimen@}%
  \else\hbox{$\wr$}\fi}
\def\harrow#1#2#3{\inmeasureCDtrue\findminarrwd
  {#2}{#3}{\sp@ncnt\minharrlen}\inmeasureCDfalse\span@ne
  \mathrel{\hbox{\options\hplace{#1}\ulabel{#2}\dlabel{#3}}}}
\def\noharrow{\harrow\hfill{}{}}
\def\vertexvarrow#1#2#3{\findarrdp \@rrwd=\z@ \setsp@n\@ne\@ne
  \vbox to \z@{\kern-1.2\CDstrutht
  \rlap{\options\vplace{#1}\llabel{#2}\rlabel{#3}}\vss}}
\newif\ifinmeasureCD
\def\measurelabel#1{\setbox\z@
  \hbox{$\scriptstyle#1\kern\labelsurr$}%
  \ifdim\wd\z@>\@rrwd \@rrwd=\wd\z@\fi}
\def\findminarrwd#1#2#3{\@rrwd=#3\relax
   \measurelabel{#1}\measurelabel{#2}}
\def\findCDarrwd#1#2{\@rrwd=\minCDharrlen
   \measurelabel{#1}\measurelabel{#2}%
  }
\newdimen\vrtxhalfwd  \newbox\tempbox
\def\DANABUG{\advance\col by \@ne
 \@rrwd=\minCDharrlen
  \advance\@rrwd by \vrtxhalfwd
  \advance\@rrwd by \CDarrsurr
  \ifnum\col>\numcols \numcols=\col
     \newlocdimen{col\the\col}\locdimen=\@rrwd 
  \else \ifdim\@rrwd>\c@l \c@l=\@rrwd\fi\fi}
\def\drop#1\\{
  \findvrtxhalfsum\DANABUG\advance\row by 2 \measureinit}
\def\measureinit{\col=\@ne \vrtxhalfwd=-\CDarrsurr\arrspan=\@ne\@rrwd=\z@
   \setbox\tempbox=\hbox\bgroup$}
\def\measure{
  \let\harrow\measureCDarrow
  \let\CDCR=\measureCR 
   \findminc@lwd 
  \inmeasureCDtrue
  \row=\@ne \numcols=\z@ \measureinit}
\def\endmeasure{\findvrtxhalfsum\DANABUG
  \numrows=\row 
  \inmeasureCDfalse}
\def\newlocdimen#1{\advance\dimenc@unt by \@ne
  \ifnum\dimenc@unt<\insc@unt
     \else\errmessage{No room for the CD}\fi
  \dimendef\locdimen=\dimenc@unt
  \expandafter\dimendef\csname#1\endcsname=\dimenc@unt}
 \def\r@wc@l{\csname row\the\row col\the\col\endcsname}
 \def\c@l{\csname col\the\col\endcsname}
 \def\findvrtxhalfsum{$\egroup
  \newlocdimen{row\the\row col\the\col}
  \locdimen=\vrtxhalfwd 
  \vrtxhalfwd=0.5\wd\tempbox 
  \advance\vrtxhalfwd by \CDarrsurr
  \advance\locdimen by \vrtxhalfwd 
  \advance\@rrwd by \locdimen 
  \maybepush
  \divide\@rrwd by \arrspan\relax
  \ifdim\@rrwd<\minc@lwd
    \ifnum\col>\@ne \@rrwd=\minc@lwd\fi \fi
  \loop 
    \ifnum\col>\numcols \numcols=\col
       \newlocdimen{col\the\col}
       \locdimen=\@rrwd 
    \else \ifdim\@rrwd>\c@l \c@l=\@rrwd\fi \fi
   \ifnum\arrspan>\@ne
      \advance\arrspan by -1 \advance\col by \@ne
  \repeat }
 \def\measureCDarrow#1#2#3{\findvrtxhalfsum
   \arrspan=\sp@ncnt\relax\global\sp@ncnt=1\relax
   \advance\col by \@ne
   \findCDarrwd{#2}{#3}%
   \setbox\tempbox=\hbox\bgroup$}
 \def\locate#1:#2{\ifinmeasureCD\else
   \count@=-#1
   \multiply\count@ by 2
   \advance\count@ by #2
   \dimen@=\count@\@rrwd
   \ifnum\dr@tn=\@ne\relax \else\dimen@=-\dimen@ \fi
   \dimen@i=\@rrdp
   \ifnum\dr@tn>\z@\advance\dimen@i by \CDstrutlen \fi
   \dimen@i=\count@\dimen@i
   \count@=#2 \multiply\count@ by 2
   \divide\dimen@ by \count@
   \divide\dimen@i by \count@
   \lift\dimen@i\nudge\dimen@\fi}
\def\betweenCDrows{\advance\row by \@ne \col=\@ne
\options}
\def\hbegin{\hbox\bgroup\kern\c@l \kern-\r@wc@l$}
\def\hend{$\glet\maybepush\relax \CDstrut\egroup}
\def\vbegin{\setbox\tempbox=\hbox\bgroup$}
\def\vend{$\egroup\ht\tempbox=\z@\dp\tempbox\CDvarrlen
  \box\tempbox}
\def\setCD{\let\harrow=\setCDarrow
  \let\CDCR=\setCR 
  \row=\@ne \col=\@ne \hbegin}
\let\endsetCD=\hend 
\def\findarrwd{\@rrwd=\z@ \count@=\col \advance\count@ by\sp@ncnt
  \loop\ifnum\count@>\col \advance\count@ by -1
      \advance\@rrwd by\csname col\the\count@\endcsname\repeat}
\def\setCDarrow#1#2#3{\kern\CDarrsurr\advance\col by \@ne
  \findarrwd \advance\@rrwd by -\r@wc@l  
  \@rrdp=\z@ 
  \maybepush
  \advance\col by -\@ne \advance\col by \sp@ncnt \span@ne
  \hbox to \@rrwd{\options
   \@rrwd=\scalefactor\@rrwd\hss
   \hplace{#1}\ulabel{#2}\dlabel{#3}\hss}%
   \kern\CDarrsurr}
\newdimen\labspacei 
\newdimen\labspaceii 
\newdimen\@xisheight
\newdimen\labelskip
\newdimen\topofshaft
\newdimen\botofshaft
\newdimen\botofulabel
\newdimen\topofdlabel
\def\getlabeldims{
  \topofshaft=0.5\sh@ftdiam
  \botofshaft=\topofshaft
  \advance\topofshaft by \@xisheight  
  \advance\botofshaft by -\@xisheight  
  \botofulabel=\topofshaft
  \advance\botofulabel by \labelskip
  \topofdlabel=\botofshaft
  \advance\topofdlabel by \labelskip}
\def\ulabel{\ifnum\row=\@ne\let\next\ulabeli
   \else\let\next\ulabellap\fi\next}
\def\ulabeli#1{\vbox{
  \clap{\kern-\@rrwd$\scriptstyle#1$}%
  \kern\botofulabel}\maybeoffset}
\def\ulabellap#1{\vbox to \z@{\vss
  \clap{\kern-\@rrwd$\scriptstyle#1$}%
  \kern\botofulabel}\maybeoffset}
\def\dlabel{\ifnum\row=\numrows\let\next\dlabeli
   \else\let\next\dlabellap\fi\next}
\def\dlabeli#1{\vtop{\kern\topofdlabel
  \clap{\kern-\@rrwd$\scriptstyle#1$}%
  }\maybeoffset}
\def\dlabellap#1{\vbox to \z@{\kern\topofdlabel
  \clap{\kern-\@rrwd$\scriptstyle#1$}%
  \vss}\maybeoffset}
\def\rlabel#1{\vbox to \z@{\vss
  \rlap{\kern\labelskip$\scriptstyle#1$}%
  \vss\kern-\@rrdp}\maybeoffset}
\def\llabel#1{\vbox to \z@{\vss
  \llap{$\scriptstyle#1$\kern\labelskip}%
  \vss\kern-\@rrdp}\maybeoffset}
\def\swlabel#1{\vtop{\kern0.5\@rrdp
  \llap{$\scriptstyle#1$\kern\labelskip\kern-0.5\@rrwd}
  }\maybeoffset}
\def\nwlabel#1{\vbox{
  \llap{$\scriptstyle#1$\kern\labelskip\kern-0.5\@rrwd}%
  \kern-0.5\@rrdp}\maybeoffset}
\def\selabel#1{\vtop{\kern0.5\@rrdp
  \rlap{\kern0.5\@rrwd\kern\labelskip$\scriptstyle#1$}%
  }\maybeoffset}
\def\nelabel#1{\vbox{
  \rlap{\kern0.5\@rrwd\kern\labelskip$\scriptstyle#1$}%
  \kern-0.5\@rrdp}\maybeoffset}
\def\cplace#1{\vbox to \z@{\vss
  \clap{$#1$\kern-\@rrwd}%
  \kern-\@rrdp\vss}\maybeoffset}
\def\hplace#1{\hbox to \@rrwd{#1}\maybeoffset}
\def\vplace#1{\clap{\vbox to \z@{#1\kern-\@rrdp}}\maybeoffset}
\newdimen\nudgeamount \nudgeamount=\z@
\newdimen\liftamount \liftamount=\z@
\let\maybeoffset\relax
\newbox\offsetbox \newdimen\lastheight
\def\dooffset{
  \setbox\offsetbox=\lastbox \lastheight=\ht\offsetbox 
  \setbox\offsetbox=\vbox{\kern-\liftamount\box\offsetbox}%
  \ht\offsetbox=\lastheight
  \kern\nudgeamount\box\offsetbox\kern-\nudgeamount
  \global\nudgeamount=\z@ \global\liftamount=\z@
  \glet\maybeoffset=\relax}
\def\nudge#1{\ifinmeasureCD\else
  \global\advance\nudgeamount#1\relax
  \global\let\maybeoffset\dooffset\fi}
\def\lift#1{\ifinmeasureCD\else
  \global\advance\liftamount#1\relax
  \global\let\maybeoffset\dooffset\fi}
\def\findarrdp{\@rrdp=\CDvarrlen
  \ifnum\sp@ncnt@>1
    \advance\@rrdp by \CDstrutlen
    \multiply\@rrdp by \sp@ncnt@
    \advance\@rrdp by -\CDstrutlen \fi
 }
\def\varrow#1#2#3{\ifnum\sp@ncnt>\@ne 
     \sp@ncnt@=\sp@ncnt\relax\fi
  \findarrdp \@rrwd=\z@ 
  \kern\c@l
   \hbox to \z@{\options
   \@rrdp=\scalefactor\@rrdp
    \hss\vplace{#1}\llabel{#2}\rlabel{#3}\hss}%
  \global\advance\col by \@ne \setsp@n\@ne\@ne
  }
\def\novarrow{\varrow\vfill{}{}}
\def\tweenarrows#1{\findarrwd \findarrdp \setsp@n\@ne\@ne
  \rlap{\options\cplace{#1}}}
\def\usarrow #1#2#3{\dr@tn=\@ne
  \findarrwd \findarrdp \setsp@n\@ne\@ne 
  \rlap{\options\cplace{#1}\nwlabel{#2}\selabel{#3}}%
  \dr@tn=\z@}
\def\dsarrow #1#2#3{\dr@tn=\tw@
  \findarrwd \findarrdp \setsp@n\@ne\@ne 
  \rlap{\options\cplace{#1}\swlabel{#2}\nelabel{#3}}%
  \dr@tn=\z@}
 \def\@rrow#1{\csname #1@rrow\endcsname}
 \def\R@rrow{\harrow \rtarrfill}
 \def\L@rrow{\harrow \ltarrfill}
 \def\V@rrow{\varrow \dnarrfill}
 \def\A@rrow{\varrow \uparrfill}
 \def\SE@rrow{\dsarrow \SEarrow}
 \def\NW@rrow{\dsarrow \NWarrow}
 \def\SW@rrow{\usarrow \SWarrow}
 \def\NE@rrow{\usarrow \NEarrow}
 \def\DS@rrow{\dsarrow \dnslope}
 \def\US@rrow{\usarrow \upslope}
 \def\upslope{\find@xyargs
       \@linelen=\unitlength\@sline}
 \def\dnslope{\find@xyargs\@yarg=-\@yarg\relax
       \@linelen=\unitlength\@sline}
\newtoks\optionlist 
\let\options\relax
\def\dooptions{\the\optionlist\global\optionlist={}%
  \glet\options=\relax}
\def\option#1{\ifinmeasureCD\else
  \glet\options=\dooptions
  \global\optionlist=\expandafter{\the\optionlist\relax#1}\fi}
\def\wider#1{\ifinmeasureCD\else
   \option{\advance\@rrwd by #1}\fi}
\def\deeper#1{\ifinmeasureCD\else
   \option{\advance\@rrdp by #1}\fi}
\def\\{\global\let\sptoken= }\\ }
\def\CR{\futurelet\nexttok\testCR}
\def\testCR{\ifx\nexttok\sptoken
   \let\next\eatspaceCR\else\let\next\CDCR\fi\next}
\def\eatspaceCR#1 {\CR}
\def\measureCR{\ifx\nexttok\endmeasure\let\nextCR\relax
    \else\let\nextCR\drop\fi\nextCR}
\def\setCR{\ifodd\row
  \ifx\nexttok\endsetCD\else\hend\betweenCDrows\vbegin\fi
  \else\vend\betweenCDrows\hbegin\fi}
\def\CD#1\endCD{
   \begingroup\let\\=\CR
  \m@th\offinterlineskip
   \measure#1\endmeasure\null\,\vcenter{\setCD#1\endsetCD}\,
   \endgroup
    }
 \font\@linefnt=line10 
 \newdimen\@tempdima
 \newdimen\@tempdimb
 \newdimen\@wholewidth
 \newdimen\@halfwidth
 \newdimen\unitlength
 \newbox\@linechar
 \newdimen\@linelen
 \newdimen\@clnwd
 \newdimen\@clnht
 \newif\if@negarg
 \def\@whilenoop#1{}
 \def\@whiledim#1\do #2{\ifdim #1\relax#2\@iwhiledim{#1\relax#2}\fi}
 \def\@iwhiledim#1{\ifdim #1\let\@nextwhile=\@iwhiledim 
         \else\let\@nextwhile=\@whilenoop\fi\@nextwhile{#1}}
 \def\@sline{\ifnum\@xarg< 0 \@negargtrue \@xarg -\@xarg \@yyarg -\@yarg
   \else \@negargfalse \@yyarg \@yarg \fi
 \ifnum \@yyarg >0 \@tempcnta\@yyarg \else \@tempcnta -\@yyarg \fi
 \ifnum\@tempcnta>6 \@badlinearg\@tempcnta0 \fi
 \ifnum\@xarg>6 \@badlinearg\@xarg 1 \fi
 \setbox\@linechar\hbox{\@linefnt\@getlinechar(\@xarg,\@yyarg)}%
 \ifnum \@yarg >0 \let\@upordown\raise \@clnht\z@
    \else\let\@upordown\lower \@clnht \ht\@linechar\fi
 \@clnwd=\wd\@linechar
 \if@negarg \hskip -\wd\@linechar \def\@tempa{\hskip -2\wd\@linechar}\else
      \let\@tempa\relax \fi
 \@whiledim \@clnwd <\@linelen \do
   {\@upordown\@clnht\copy\@linechar
    \@tempa
    \advance\@clnht \ht\@linechar
    \advance\@clnwd \wd\@linechar}%
 \advance\@clnht -\ht\@linechar
 \advance\@clnwd -\wd\@linechar
 \@tempdima\@linelen\advance\@tempdima -\@clnwd
 \@tempdimb\@tempdima\advance\@tempdimb -\wd\@linechar
 \if@negarg \hskip -\@tempdimb \else \hskip \@tempdimb \fi
 \multiply\@tempdima \@m
 \@tempcnta \@tempdima \@tempdima \wd\@linechar \divide\@tempcnta \@tempdima
 \@tempdima \ht\@linechar \multiply\@tempdima \@tempcnta
 \divide\@tempdima \@m
 \advance\@clnht \@tempdima
 \ifdim \@linelen <\wd\@linechar
    \hskip \wd\@linechar
   \else\@upordown\@clnht\copy\@linechar\fi}
 \def\@getlinechar(#1,#2){\@tempcnta#1\relax\multiply\@tempcnta 8
 \advance\@tempcnta -9 \ifnum #2>0 \advance\@tempcnta #2\relax\else
 \advance\@tempcnta -#2\relax\advance\@tempcnta 64 \fi
 \char\@tempcnta}
 \def\vector(#1,#2)#3{\@xarg #1\relax \@yarg #2\relax
 \@tempcnta \ifnum\@xarg<0 -\@xarg\else\@xarg\fi
 \ifnum\@tempcnta<5\relax
 \@linelen=#3\unitlength
 \ifnum\@xarg =0 \@vvector 
   \else \ifnum\@yarg =0 \@hvector \else \@svector\fi
 \fi
 \else\@badlinearg\fi}
 \def\@svector{\@sline
 \@tempcnta\@yarg \ifnum\@tempcnta <0 \@tempcnta=-\@tempcnta\fi
 \ifnum\@tempcnta <5
   \hskip -\wd\@linechar
   \@upordown\@clnht \hbox{\@linefnt  \if@negarg 
   \@getlarrow(\@xarg,\@yyarg) \else \@getrarrow(\@xarg,\@yyarg) \fi}%
 \else\@badlinearg\fi}
 \def\@getlarrow(#1,#2){\ifnum #2 =\z@ \@tempcnta='33\else
 \@tempcnta=#1\relax\multiply\@tempcnta \sixt@@n \advance\@tempcnta
 -9 \@tempcntb=#2\relax\multiply\@tempcntb \tw@
 \ifnum \@tempcntb >0 \advance\@tempcnta \@tempcntb\relax
 \else\advance\@tempcnta -\@tempcntb\advance\@tempcnta 64
 \fi\fi\char\@tempcnta}
 \def\@getrarrow(#1,#2){\@tempcntb=#2\relax
 \ifnum\@tempcntb < 0 \@tempcntb=-\@tempcntb\relax\fi
 \ifcase \@tempcntb\relax \@tempcnta='55 \or 
 \ifnum #1<3 \@tempcnta=#1\relax\multiply\@tempcnta
 24 \advance\@tempcnta -6 \else \ifnum #1=3 \@tempcnta=49
 \else\@tempcnta=58 \fi\fi\or 
 \ifnum #1<3 \@tempcnta=#1\relax\multiply\@tempcnta
 24 \advance\@tempcnta -3 \else \@tempcnta=51\fi\or 
 \@tempcnta=#1\relax\multiply\@tempcnta
 \sixt@@n \advance\@tempcnta -\tw@ \else
 \@tempcnta=#1\relax\multiply\@tempcnta
 \sixt@@n \advance\@tempcnta 7 \fi\ifnum #2<0 \advance\@tempcnta 64 \fi
 \char\@tempcnta}
\newcommand{\F}{{\mathbb F}}
\newcommand{\Q}{{\mathbb Q}}
\numberwithin{equation}{section}
\DeclareMathOperator{\Mod}{mod}
\begin{document}

\newtheorem{theorem}{Theorem}[section]
\newtheorem{lemma}[theorem]{Lemma}
\newtheorem{prop}[theorem]{Proposition}
\newtheorem{proposition}[theorem]{Proposition}
\newtheorem{corollary}[theorem]{Corollary}
\newtheorem{corol}[theorem]{Corollary}
\newtheorem{conj}[theorem]{Conjecture}

\theoremstyle{definition}
\newtheorem{defn}[theorem]{Definition}
\newtheorem{example}[theorem]{Example}
\newtheorem{examples}[theorem]{Examples}
\newtheorem{remarks}[theorem]{Remarks}
\newtheorem{remark}[theorem]{Remark}
\newtheorem{algorithm}[theorem]{Algorithm}
\newtheorem{question}[theorem]{Question}
\newtheorem{problem}[theorem]{Problem}
\newtheorem{subsec}[theorem]{}
\newtheorem{acknowledgements}[theorem]{Acknowledgements \nonumber}

\def\toeq{{\stackrel{\sim}{\longrightarrow}}}
\def\into{{\hookrightarrow}}


\def\alp{{\alpha}}  \def\bet{{\beta}} \def\gam{{\gamma}}
 \def\del{{\delta}}
\def\eps{{\varepsilon}}
\def\kap{{\kappa}}                   \def\Chi{\text{X}}
\def\lam{{\lambda}}
 \def\sig{{\sigma}}  \def\vphi{{\varphi}} \def\om{{\omega}}
\def\Gam{{\Gamma}}   \def\Del{{\Delta}}
\def\Sig{{\Sigma}}   \def\Om{{\Omega}}
\def\ups{{\upsilon}}


\def\F{{\mathbb{F}}}
\def\BF{{\mathbb{F}}}
\def\BN{{\mathbb{N}}}
\def\Q{{\mathbb{Q}}}
\def\Ql{{\overline{\Q }_{\ell }}}
\def\CC{{\mathbb{C}}}
\def\R{{\mathbb R}}
\def\V{{\mathbf V}}
\def\D{{\mathbf D}}
\def\BZ{{\mathbb Z}}
\def\K{{\mathbf K}}
\def\XX{\mathbf{X}^*}
\def\xx{\mathbf{X}_*}

\def\AA{\Bbb A}
\def\BA{\mathbb A}
\def\HH{\mathbb H}
\def\PP{\Bbb P}

\def\Gm{{{\mathbb G}_{\textrm{m}}}}
\def\Gmk{{{\mathbb G}_{\textrm m,k}}}
\def\GmL{{\mathbb G_{{\textrm m},L}}}
\def\Ga{{{\mathbb G}_a}}

\def\Fb{{\overline{\F }}}
\def\Kb{{\overline K}}
\def\Yb{{\overline Y}}
\def\Xb{{\overline X}}
\def\Tb{{\overline T}}
\def\Bb{{\overline B}}
\def\Gb{{\bar{G}}}
\def\Ub{{\overline U}}
\def\Vb{{\overline V}}
\def\Hb{{\bar{H}}}
\def\kb{{\bar{k}}}

\def\Th{{\hat T}}
\def\Bh{{\hat B}}
\def\Gh{{\hat G}}

\def\cF{{\mathfrak{F}}}
\def\cC{{\mathcal C}}
\def\cU{{\mathcal U}}

\def\Xt{{\widetilde X}}
\def\Gt{{\widetilde G}}

\def\gg{{\mathfrak g}}
\def\hh{{\mathfrak h}}
\def\lie{\mathfrak a}

\def\GL{\textrm{GL}}            \def\Stab{\textrm{Stab}}
\def\Gal{\textrm{Gal}}          \def\Aut{\textrm{Aut\,}}
\def\Lie{\textrm{Lie\,}}        \def\Ext{\textrm{Ext}}
\def\PSL{\textrm{PSL}}          \def\SL{\textrm{SL}}
\def\loc{\textrm{loc}}
\def\coker{\textrm{coker\,}}    \def\Hom{\textrm{Hom}}
\def\im{\textrm{im\,}}           \def\int{\textrm{int}}
\def\inv{\textrm{inv}}           \def\can{\textrm{can}}
\def\Cl{\textrm{Cl}}
\def\Sz{\textrm{Sz}}
\def\ad{\textrm{ad\,}}
\def\SU{\textrm{SU}}
\def\PSL{\textrm{PSL}}
\def\PSU{\textrm{PSU}}
\def\rk{\textrm{rk}}
\def\PGL{\textrm{PGL}}
\def\Ker{\textrm{Ker}}
\def\Ob{\textrm{Ob}}
\def\Var{\textrm{Var}}
\def\poSet{\textrm{poSet}}
\def\Al{\textrm{Al}}
\def\Int{\textrm{Int}}
\def\Mod{\textrm{Mod}}
\def\Smg{\textrm{Smg}}
\def\ISmg{\textrm{ISmg}}
\def\Ass{\textrm{Ass}}
\def\Grp{\textrm{Grp}}
\def\Com{\textrm{Com}}
\def\Im{\textrm{Im}}
\def\Val{\textrm{Val}}
\def\LKer{\textrm{LKer}}
\def\Val{\textrm{Val}}
\def\Th{\textrm{Th}}
\def\Set{\textrm{Set}}
\def\Hal{\textrm{Hal}}
\def\Lat{\textrm{Lat}}
\def\LK{\textrm{LK}}

\def\tors{_\def{\textrm{tors}}}      \def\tor{^{\textrm{tor}}}
\def\red{^{\textrm{red}}}         \def\nt{^{\textrm{ssu}}}

\def\sss{^{\textrm{ss}}}          \def\uu{^{\textrm{u}}}
\def\mm{^{\textrm{m}}}
\def\tm{^\times}                  \def\mult{^{\textrm{mult}}}

\def\uss{^{\textrm{ssu}}}         \def\ssu{^{\textrm{ssu}}}
\def\comp{_{\textrm{c}}}
\def\ab{_{\textrm{ab}}}

\def\et{_{\textrm{\'et}}}
\def\nr{_{\textrm{nr}}}

\def\nil{_{\textrm{nil}}}
\def\sol{_{\textrm{sol}}}
\def\End{\textrm{End\,}}

\def\til{\;\widetilde{}\;}

\large



\title[Isotyped algebras]{Isotyped algebras }

\author[Boris   Plotkin]{Boris Plotkin}
\address{Department of Mathematics, Hebrew University, Jerusalem, Israel}
\email{plotkin@macs.biu.ac.il}


\begin{abstract}
The paper is essentially a continuation of \cite{PZ}, whose main
notion is that of logic-geometrical equivalence of algebras
(LG-equivalence of algebras). This equivalence of algebras is stronger than
elementary equivalence. In the paper we introduce the notion of isotyped algebras and relate it to
LG-equivalence. We show that these notions coincide. The
idea of the type is one of the central ideas in Model Theory. The correspondence introduced in the paper  stimulates
 a bunch of problems which connect  universal algebraic
geometry and Model Theory. 
 We provide a new general
view on the subject, arising "on the territory" of universal
algebraic geometry. 
This insight   yields also applications of algebraic logic
   in  Model Theory. Application of algebraic logic in Model theory makes some approaches more transparent.

\end{abstract}

\maketitle


CONTENT

1. General view.

2. Logical noetherianity

3. Isotypeness and isomorphism

4. Logically perfect algebras

5. Some facts from algebraic logic. Appendix

\section{General view} \label{sec:general_view}

\subsection{Introduction}\label{subsec:intro}
The notion of a type of an algebra, like of
any other algebraic system, came from Model Theory and turns to be
one of its key notions (see, for example, \cite{Ho}, \cite{Ma}).

 The paper is devoted to isotyped
algebras that is to the algebras with the same types. We consider algebras which belong to a fixed variety of algebras $\Theta$.
We approach to the
notion of a type from the positions of {\it universal algebraic geometry} (UAG).
On the one hand, universal algebraic geometry is an equational
algebraic geometry  in an arbitrary
variety of algebras $\Theta$. This means that algebraic sets are defined by systems of equations
in free algebras from $\Theta$. On the other hand, universal algebraic geometry spreads to  First Order
Logic (FOL)  geometry   in an arbitrary $\Theta$ ({\it logical geometry}).
This means that algebraic sets are defined  by arbitrary first order formulas,
semantically compressed in the given $\Theta$. In the case of logical geometry algebraic sets are called elementary sets and arbitrary first order formulas replace equations
(see \cite{PZ} for details).

 In the paper we proceed from the system of notions of algebraic
logic.  In principle, it is possible to translate this approach to the usual model theoretic language. However, we  believe that application of algebraic logic makes the main ideas of the paper more transparent and consistent.

The bridge between logical geometry and model theory is provided via algebraic logic by the means of the algebra of formulas $\Phi = \Phi(X)$, $X$ is a finite set of variables. In fact,  $\Phi(X)$ is {\it a set of first order formulas over $X$} which is converted in a special way into an algebra of formulas. The precise definition of the algebra $\Phi = \Phi(X)$ is given in Section \ref{sec:appendix}.5 (see also \cite{P6}-\cite{P8}).

Let $W=W(X)$ be the free algebra in $\Theta$
over $X$. An equality
$w\equiv w'$, which is an element in the algebra $\Phi(X)$,
corresponds to an equation $w= w'$ in $W(X)$. So, equalities are considered as nullary operations (constants) in $\Phi(X)$. Boolean algebras with equalities of the form $w\equiv w'$, and
 with acting quantifiers $\exists x$ by all $x \in X$ are called {\it extended boolean algebras} (for the list of identities see \cite{PZ}, Section 2.1  and Subsection 5.2 of this paper). The algebra  $\Phi(X)$ is an example of an extended boolean algebra. However, in  $\Phi(X)$ there are other operations $s_*$ (see below).


It was mentioned that universal algebraic geometry is an algebraic geometry associated to an arbitrary variety of algebras $\Theta$. If $\Theta=Com-P$ is the variety of commutative associative algebras with unit over the field $P$ then we arrive to classical algebraic geometry. One of the principal problems related to universal algebraic geometry is to understand what a part of rich geometry of the variety $\Theta=Com-P$ (i.e. of the classical algebraic geometry) survives in other varieties $\Theta$. On the other hand, the new ideas related to UAG  and, especially, to logical geometry  appear in the classical situation of $Com-P$.

  It is quite important to note 
  that equational algebraic geometry (AG for short) is connected with the category $\Theta^0$ of the free in $\Theta$ algebras $W=W(X)$ with finite sets $X$. 
  In order to easy the intuition  one should mention that in classical case $\Theta^0$ is just the category of all polynomial algebras over a field $P$. The role of $\Theta^0$ in logical geometry  (LG for short) plays the special category $Hal_\Theta^0$, whose objects are the algebras of formulas $\Phi(X)$ (we use the notation "$Hal$" in order to remind the role played by P.Halmos in algebraic logic).  The categories $\Theta^0$ and $Hal_\Theta^0$ are bounded by the covariant functor

  $$
  \Theta^0\to Hal_\Theta^0.
  $$

This functor attaches a morphism $s_*: \Phi(X)\to\Phi(Y)$ in $Hal_\Theta^0$ to each homomorphism-morphism $s:W(X)\to W(Y)$ in $\Theta^0$.  Here $s_{*}$ is a boolean homomorphism which is in some sense compatible with quantifiers and equalities. The same $s_*$
can be treated as an operation in  special multi-sorted Halmos algebras (see Section 5 for details). In particular, the operation $s_*$ can  present in  a record of  elements from $\Phi(X)$.

Now let us make one more step towards the general theory. Denote by $\Phi^0=\Phi^0(X)$ the subalgebra in $\Phi(X)$ generated by all equalities in the signature of the boolean operations and quantifiers.  This is an extended boolean algebra which is a subalgebra in the extended boolean algebra $\Phi(X)$. This subalgebra is usually less than $\Phi(X)$ since the operations $s_*$ are not involved in the records of the elements from $\Phi^0$. We shall note here, that the algebra $\Phi^0(X)$ cannot be defined independently from the algebra $\Phi(X)$. In its turn $\Phi(X)$ is defined through the means of algebraic logic.

\begin{remark}  Axioms of Halmos algebras (Subsection 5.3) imply that if we have $s:W(Y)\to W(X)$ and $v$ is an equality in $\Phi^0(Y)$  then $s_*(v)$ is an equality in $\Phi^0(X)$. However, in general, for the formula $v=\exists yv_0 \in \Phi^0(Y)$ the formula $s_*(v)$ can be not in $\Phi^0(X)$. (See, for example, Proposition \ref{prop:cycl}).
\end{remark}

Now we shall consider the origin of the morphisms and operations  $s_*$. The reasons for introducing $s_*$ are as follows. It is well-known that the standard algebraic first order logic uses an infinite number of variables. Denote the infinite set of variables by  $X^0$. Assuming the needs of logical geometry we shall deal with the system $\Gamma$ of all finite subsets $X$ in  $X^0$. In algebraic logic this approach leads to Halmos categories and multi-sorted (i.e., $\Gamma$-sorted) Halmos algebras. Different $\Phi(X)$, $X\in \Gamma$ should be somehow connected. This peculiarity requires introduction of the operations and morphisms of the type $s_*$ (see Section 5 for details).


 We use some notions which can be found in \cite{PZ}. For the sake of completeness many 
 definitions from \cite{PZ} are reproduced throughout the paper. Papers devoted to universal algebraic geometry (\cite{BMR},\cite{KMR}, \cite{MR},\cite{P6},\cite{P3},\cite{P7},\cite{P8}, etc., ) makes the material more friendly.

 The paper is organized as follows. In Section 1 we introduce the notion of isotyped algebras and relate it to the known notions of universal algebraic geometry. Section 2 deals with noetherian properties. In Section 3 we introduce the notion of a logically separable algebra and give examples of isotyped but not isomorphic algebras. Section 4 is devoted to logically perfect algebras. We show that every group can be embedded into a logically perfect one. We also consider two examples of abelian groups and study their behavior with respect to logical properties. Section 5 makes the paper self-complete. In this section we provide the reader with the notions of algebraic logic. In particular we define the algebra of formulas $\Phi(X)$ which plays a principal  role in all considerations. In Section 5 there is also a list of open problems.

 In this paper we have no difficult theorems. But we have some new insight and new problems in the field of mathematics which can be characterized as Logic in Algebra. 
 Concerning the objects of investigation, in this paper an algebra $H\in\Theta$ takes a primary role and is considered from the perspective of its logical and geometric invariants.

\subsection{Algebra $Bool(W(X),H)$}\label{subsec:Bool}

 Let $H$ be an algebra in $\Theta$. Take a free in
$\Theta$ algebra $W=W(X)$ with a finite $X$ and consider the set
$Hom(W,H)$ as an affine space. Its points are homomorphisms $\mu : W
\to H$. If $X=\{ x_1, \ldots , x_n \}$, then $Hom(W,H)$ is isomorphic to $H^{(n)}$ and  a point $\mu$ meets a
tuple $\bar a = (a_1, \ldots , a_n) \in H^{(n)}$.

Denote by $Bool(W(X),H)$ the boolean algebra of all subsets of
$Hom(W(X),H)$. Define the action of quantifiers $\exists x, x \in X$.
Recall  (see \cite{H}) that if $B$ is a boolean algebra, then the mapping $\exists :
B \to B$ is an existential quantifier if

1. $\exists (0) =0$,

2. $\exists (a) \geq a$,

3. $\exists (a \wedge \exists b) = \exists a \wedge \exists b$.

\noindent
A universal quantifier $\forall : B \to B$ is defined dually as
$\forall (a) = \neg (\exists (\neg a ))$.

Let now $A \in Bool(W(X),H)$ and $x \in X$. We set: $\mu \in \exists x
A$ if there is  $\nu \in A$ such that $\mu(x') =\nu(x')$ for every
$x'\in X$, $x'\neq x$. The necessary conditions $(1)-(3)$ hold true and the
definition of existential quantifier perfectly agrees with 
intuition.

Let, further, $w\equiv w'$ be an equality in the algebra of formulas $\Phi(X)$. Define the
corresponding element of the algebra $Bool(W(X),H)$ by
$Val^X_H(w\equiv w') = \{ \mu: W \to H | (w,w') \in Ker(\mu) \}$. These elements are considered as equalities in $Bool(W(X),H)$.

Thus  $Bool(W(X),H)$ is defined as an extended Boolean algebra.

 As
we will see in Subsection 5.6, the correspondence  $w\equiv w' \to Val^X_H(w\equiv w')$ is
naturally extended up to a homomorphism value of extended boolean algebras
$$Val^X_H : \Phi(X) \to
Bool(W(X),H). \eqno(*)$$

We consider also the category $Hal_\Theta(H)$ of all $Bool(W(X),H)$ with the natural morphisms $s_\ast$. The values and morphisms in $Hal_\Theta^0$ and $Hal_\Theta(H)$ are connected by the following commutative diagram

$$
\CD
\Phi(X)@> s_\ast>> \Phi(Y)\\
@V \Val^X_H VV @VV \Val^Y_H V\qquad\qquad\qquad(**)   \\
Bool(W(X),H) @>s_\ast>> Bool(W(Y),H).
\endCD
$$

\medskip

\begin{remark} The existence of homomorphisms $\Val^X_H$ for every algebra $H$ in $\Theta$ which satisfy the diagram above was a leading idea towards the definition of algebra of formulas $\Phi(X)$ and the category of such algebras of formulas $Hal_\Theta^0$. This is a place we faced with advantages of application of algebraic logic.

More precisely, our aim is to define the system of algebras of formulas $\Phi(X)$, where $X$ are finite subsets of $X^0$ and to define the category $Hal_\Theta^0$ of these algebras in such a way that  for every algebra $H\in\Theta$ and every $s:W(X)\to W(Y)$ the conditions $(*)$ and $(**)$ are fulfilled.

 In fact, what we have to do is to define the upper level of the diagram $(**)$ and the vertical arrows having already the lower level of the diagram.
This leads to constructions  and conditions which are  realized in Section 5.

Such approach to the definition of the algebra $\Phi(X)$ and the category  $Hal_\Theta^0$ is well coordinated with the approach based on application of the equivalence of Lindenbaum-Tarski.

\end{remark}

\subsection{Logical kernel of a point, types, isotyped algebras}

Define the  notion of the {\it logical kernel} of a point $\mu: W(X)
\to H$. So, along with $Ker(\mu)$ we will consider the logical kernel $LKer(\mu)$. This logical kernel is an important logical invariant of a point.

\begin{defn}
Let
$u \in \Phi(X)$ and $\mu: W(X)
\to H$ be a point in $Hom(W(X),H)$. We set: $u \in LKer(\mu)$, if $\mu \in Val^X_H
(u)$.
\end{defn}

In this case we say that $Val^X_H (u)$ is the value of a formula $u$ in
$Bool(W(X),H)$ and a point $\mu$ is a solution of the "equation" $u$
in $Hom(W(X),H)$.
This definition corresponds to the usual inductive definition of a point satisfying a formula (see \cite{Ma}).

We have also $$Ker(\mu) = LKer(\mu)\cap M_X,$$
where $M_X$ is a set of all equalities $w\equiv w', \ w,w' \in
W(X)$.

Show that the kernel $LKer(\mu)$ is an ultrafilter of the boolean
algebra $\Phi(X)$. First prove that it is a filter. Let $u_1, u_2
\in LKer(\mu)$. We have $\mu \in Val^X_H (u_1) \cap Val^X_H (u_2)=
Val^X_H (u_1 \wedge u_2)$. Hence, $u_1 \wedge u_2 \in LKer(\mu)$.
Let, now, $u \in LKer(\mu)$, $v \in \Phi(X)$. We have: $\mu \in
Val^X_H (u)\cup  Val^X_H (v) = Val^X_H (u \vee v)$. Thus, $u \vee v
\in LKer(\mu)$ and $LKer(\mu)$ is a filter.

Let now $u \in \Phi(X), u \not \in LKer(\mu)$, i.e., $ \mu \not \in
Val^X_H (u), \mu \in \neg Val^X_H (u) = Val^X_H (\neg u)$. Then
$\neg u \in LKer(\mu)$ and, hence, $LKer(\mu)$  is an ultrafilter.

\begin{defn}
Every ultrafilter in $\Phi(X)$ we call $X$-type. A type $T$ we call
$X$-type of the algebra $H \in \Theta$, if $T = LKer(\mu)$ for some
$\mu : W(X) \to H$.
\end{defn}

\begin{remark}
Compare with the definition of a type from \cite{Ma}.
\end{remark}

We say that $T$ is realized in the algebra $H$ if $T$ is an $X$-type of $H$.
Denote by $S^X(H)$ the system of all $X$-types of the algebra $H$. This is an important logical invariant of the algebra $H$.

\begin{defn}
Algebras $H_1$ and $H_2$ in $\Theta$ are called isotyped, if for any
finite $X$ every $X$-type of the algebra $H_1$ is an $X$-type of the
algebra $H_2$ and vice versa.
\end{defn}

Thus, the algebras $H_1$ and $H_2$ are {\it isotyped} if
$$S^X(H_1)=S^X(H_2).$$
\noindent
for any $X$.
Note, further, that the logical kernel $LKer(\mu)$ of $\mu:W(X)\to H$ contains the elementary $X$-theory of the algebra $H$. Indeed, if $u\in Th^X(H)$ then $Val^H_X(u)=Hom(W(X),H)$. In particular, $\mu\in Val^X_H(u)$ and $u\in LKer(\mu)$. Thus $Th^X(H)\subset LKer(\mu).$

It is clear now that if $H_1$ and $H_2$ are isotyped then
 $$Th^X(H_1)= Th^X(H_2),$$
  where $Th^X(H)$ is the {\it elementary $X$-theory} of $H$.

\begin{defn}
We say that an algebra $H$ is {\it saturated}, if for every $X$ any ultrafilter $T$ in $\Phi(X)$ which contains  $Th^X(H)$ is realizable in $H$.
\end{defn}

\begin{remark}
 Observe that the definition of a saturated algebra which is used in Model theory operates also with constants. In our definition constants are already incorporated in the signature of the variety $\Theta$.
 \end{remark}

Now we can state that if the saturated algebras $H_1$ and $H_2$ are elementary equivalent i.e., $Th(H_1)=Th(H_2)$  then they are isotyped. Thus, the saturated algebras are elementary equivalent if and only if they are isotyped. As we noticed above the isotyped algebras are always elementary equivalent. However, in general the notion to be isotyped is more strong than to be elementary equivalent. In fact the relation on algebras "to be isotyped" can be treated as a generalization of the idea of saturated algebras.

In what follows we will consider logically noetherian algebras. The statement for such algebras is  also two sided: logically noetherian algebras $H_1$ and $H_2$ are elementary equivalent if and only if they are isotyped.

We consider the definition of isotyped algebras from the logical
geometry perspective, but first we treat (equational) algebraic
geometry. Note that in algebraic geometry it is also useful to speak
of {\it atomic kernel}. It is all equalities $w\equiv w'$ and inequalities
$w\not \equiv w'$ belonging to $LKer(\mu)$. Denote this atomic
kernel by $AtKer(\mu)$. If $w\equiv w' \not \in AtKer(\mu)$, then
$w\not \equiv w'  \in AtKer(\mu)$. In fact, $AtKer(\mu)$ is the kernel of $\mu$ represented in the algebra of formulas $\Phi(X)$.

We consider also a special logical  kernel  $LKer^0(\mu)$ defined by
$$
LKer^0(\mu)=LKer(\mu)\cap\Phi^0(X).
$$
\noindent
This kernel is an ultrafilter in the algebra $\Phi^0(X)$.

\subsection{The main Galois
correspondences in algebraic geometry
and logical geometry. $AG$- and $LG$-equivalence of algebras}

Consider the Galois correspondence between subsets $A$ in $Hom(W(X),H)$
and systems of equations $T$ in $W=W(X)$. These $T$ can be viewed as
binary relations in $W$. For each $T$ we set:
$$T'_H = A = \{\mu :W \to H | T \subset Ker(\mu)\}.$$
For an arbitrary $A$ we set
$$ A'_H = T = \bigcap_{\mu \in A} Ker(\mu).$$
We have here the Galois correspondence and we can speak of the Galois
closures $A''_H$, $T''_H$. Every set $A\subset Hom(W(X),H) $ of the form $A=T'_H$ is closed, and we call it {\it an
algebraic set}. Every system of equations $T$ of the form $T=A'_H$ is an $H$-closed congruence in $W$.

Let us do the same for the logical geometry, substituting $Ker(\mu)$
by $LKer(\mu)$. Here $T$ is an arbitrary subset in $\Phi = \Phi(X)$.
We set:
$$T^L_H = A = \{\mu :W \to H | T \subset LKer(\mu)\},$$
$$ A^L_H = T = \bigcap_{\mu \in A} LKer(\mu).$$
The corresponding  closures are $T^{LL}_H$ and $A^{LL}_H$. Each $A$ of the form $A=T^L_H$ is called an
{\it elementary set} (it can be defined for an infinite $T$ as well).  Each
$T=A^L_H$ is an $H$-closed filter in the boolean algebra $\Phi =
\Phi(X)$. We have also
$$ T^L_H = A = \bigcap_{u \in T} Val^X_H (u),$$
$$ T = A^L_H = \{ u \in \Phi(X) | A \subset Val^X_H (u) \}.$$

Recall that algebras $H_1$ and $H_2$ in $\Theta$ are called
{\it geometrically (AG-equivalent)}, if for any finite $X$ and any $T$ in
$W(X)$ we have $$T''_{H_1}= T''_{H_2}$$ (see \cite{P6}, \cite{P7}), and $H_1$ and $H_2$ are called
{\it $LG$-equivalent} if always  $$T^{LL}_{H_1} = T^{LL}_{H_2}$$ for $T
\subset \Phi(X)$ (see \cite{PZ}).

Note that starting from the logical kernel $LKer^0(\mu)$ one can also establish the Galois correspondence between subsets $T$ in $\Phi^0(X)$ and the sets of points $A$ in $Hom(W(X),H).$

\subsection{$LG$-equivalence and isotyped algebras}\label{sub:lg}
We interrelate $LG$-equivalence and isotypeness of algebras. Let $A$
be a subset in $Hom(W(X),H)$, consisting of a single point $\mu:
W(X) \to H$. We have $$ T = A^L_H = \{\mu\}^L_H = LKer(\mu).$$
Hence, $LKer(\mu)= \{\mu\}^L_H$ and $LKer(\mu)$ is an $H$-closed
ultrafilter. Let $T$ be an ultrafilter, and $T=LKer(\mu)$. Take
$T^L_H = A_0$ and let $\nu \in A_0$. We have $\{\nu\}^L_H =
LKer(\nu) \supset {A_0}^L_H = T = LKer(\mu)$. We see that any two
points in $A_0$ have the same logical kernel. Besides, we can note
that if $T$ is an ultrafilter in $\Phi(X)$, then this $T$ is
$X$-type for $H$ if and only if the set $A_0=T^L_H$ is not empty.

\begin{defn}\label{defn:rho}
Define the equivalence $\rho = \rho^X_H$ on the set $Hom(W(X),H)$
setting $\mu \rho \nu$ if and only if $LKer(\mu) =LKer(\nu)$.
\end{defn}
Consider
the quotient set $Hom(W(X),H) / \rho =
\overline{Hom}(W(X),H)$. Hence, there is a bijection $\overline{Hom}(W(X),H) \to S^X(H)$.


Note also that every coset of the equivalence $\rho$ is an
elementary set, defined by the type $LKer(\mu) =T$, where $\mu$ belongs
to the coset. According to \cite{P7}, each elementary set is
invariant under the action of the group of automorphisms $Aut(H)$.
Hence, if $\mu$ and $\nu$ are conjugated by an automorphism $\sigma
\in Aut(H)$, then $\mu \rho \nu$. Recall that the action of a group
$Aut(H)$ in $Hom(W,H)$ is defined by a transition $\mu \to \mu
\sigma$. Under certain conditions the opposite is true as well. In
these cases cosets of the equivalence $\rho$ are exactly the orbits
of the action of the group $Aut(H)$.

The following theorem is the main one.

\begin{theorem}
Algebras $H_1$ and $H_2$ are $LG$-equivalent if and only if they are
isotyped.
\end{theorem}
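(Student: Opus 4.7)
The plan is to translate both directions into a common language of intersections of $X$-types. The key preliminary identity I would establish is
$$
T^{LL}_H = \bigcap\bigl\{U \in S^X(H) : T \subset U\bigr\}
$$
for every $T \subset \Phi(X)$, valid with the usual convention that the empty intersection of subsets of $\Phi(X)$ equals $\Phi(X)$. This drops out of the definitions, since $\mu \in T^L_H$ iff $T \subset LKer(\mu)$, so as $\mu$ ranges over $T^L_H$ the logical kernel $LKer(\mu)$ sweeps out precisely those $X$-types of $H$ which contain $T$.

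For the direction ``$LG$-equivalent $\Rightarrow$ isotyped'', I would pick an $X$-type $T$ of $H_1$, written $T = LKer(\mu) = \{\mu\}^L_{H_1}$. Being in the image of $(\cdot)^L_{H_1}$, this $T$ is automatically $H_1$-closed, i.e.\ $T = T^{LL}_{H_1}$, and then $LG$-equivalence gives $T = T^{LL}_{H_2}$. The delicate step, and the main obstacle, is to ensure that $T$ is actually realized in $H_2$, not merely $H_2$-closed. Here I would argue by contradiction: if $T^L_{H_2} = \emptyset$, then the intersection formula forces $T^{LL}_{H_2} = \Phi(X)$, contradicting the fact that $T$, being a realized logical kernel, is a proper ultrafilter. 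Hence $T^L_{H_2} \neq \emptyset$ and $T \in S^X(H_2)$; by symmetry $S^X(H_1) = S^X(H_2)$.

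For the direction ``isotyped $\Rightarrow$ $LG$-equivalent'', the intersection formula essentially reduces the claim to bookkeeping. Given any $T \subset \Phi(X)$, the set $\{U \in S^X(H_i) : T \subset U\}$ depends only on $S^X(H_i)$, which coincides for $i = 1, 2$ by hypothesis; intersecting then gives $T^{LL}_{H_1} = T^{LL}_{H_2}$. The only care required here is to make sure the intersection formula is valid in the degenerate cases (empty $T^L_H$, or $T$ already an ultrafilter), but this is settled in the preliminary observation and requires nothing beyond the Galois setup of Subsection~\ref{sub:lg}.
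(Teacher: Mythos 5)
Your proof is correct and follows essentially the same route as the paper's: both rest on the identity $T^{LL}_H=\bigcap_{\mu\in T^L_H}LKer(\mu)$ together with the fact that a realized type is exactly an ultrafilter with nonempty solution set. Your single intersection formula merely repackages the paper's two ingredients (the lemma that an intersection of $H$-closed filters is $H$-closed, and the nonemptiness argument for ultrafilters), and in fact makes the step ``$T^L_{H_2}\neq\emptyset$'' more explicit than the paper does.
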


\begin{proof} Let $H_1$ and $H_2$ be $LG$-equivalent algebras. This
means that for any finite $X$ and any set $T$ of formulas from $\Phi(X)$ this $T$ is
$H_1$-closed if and only if $T$ is $H_2$-closed.

Take now  an ultrafilter $T$ in $\Phi(X)$ and let $T$ be an $X$-type
over $H_1$. Such $T$ is $H_1$-closed and, consequently,
$H_2$-closed. Here $T^L_{H_2}$ is not empty and, hence, $T$ is an
$X$-type over $H_2$. The transition from $H_2$ to $H_1$ works in a similar way.
Therefore, $H_1$ and $H_2$ are isotyped.

Let, further, $H_1$ and $H_2$ be isotyped. This means, in
particular, that if $T=LKer(\mu)$ for $\mu:W(X) \to H_1$, then
$T=LKer(\nu)$ for some $\nu:W(X) \to H_2$ as well. The same is true for the
transition from $H_2$ to $H_1$. Thus $T$ is simultaneously $H_1-$ and
$H_2-$closed.

Let, now, $T$ be an $H_1$-closed filter in $\Phi(X)$. We want to check that $T$ is $H_2-$closed.  Take
$A=T^L_{H_1}$. Then $T=A^L_{H_1}= \bigcap_{\mu \in A}LKer(\mu)$.
This also means that $u \in T$ if and only if $A \subset
Val^X_{H_1}(u)$.

We will see that the intersection of $H$-closed filters is also
$H$-closed. As we know every $H_1-$closed filter of the form $LKer(\mu)$ is  $H_2-$closed.
Thus $T= \bigcap_{\mu \in A}LKer(\mu)$ is an  $H_2$-closed filter. Similarly,
$H_2$-closedness of $T$ implies its $H_1$-closedness. This is true
for any $X$. Hence, $H_1$ and $H_2$ are LG-equivalent.

It remains to check that the intersection $T=\bigcap_\alpha
T_\alpha$ with all $H$-closed $T_\alpha, \alpha \in I$ is also
$H$-closed.

Take $T_\alpha=(A_\alpha)^L_H$ and check that $\bigcap_\alpha
(A_\alpha)^L_H = (\bigcup_\alpha A_\alpha)^L_H$. Let $u \in
\bigcap_\alpha T_\alpha =T$. The inclusions $u \in T_\alpha$ and
$A_\alpha \subset Val^X_{H}(u)$ always hold true. So,
$\bigcup_\alpha A_\alpha \subset Val^X_{H}(u)$ and, hence $u \in
(\bigcup_\alpha A_\alpha)^L_H$.

Let $u \in (\bigcup_\alpha A_\alpha)^L_H$.
Then $\bigcup_\alpha A_\alpha \subset Val^X_{H}(u)$. The inclusions
$A_\alpha \subset Val^X_{H}(u)$ and $u \in (A_\alpha)^L_H =
T_\alpha$ always hold true. Hence, $u \in T$.
\end{proof}

From this theorem follows  that if the algebras $H_1$
and $H_2$ are $LG-$equivalent, then they are elementary equivalent (see \cite{PZ}).
Besides, if $H_1$ and $H_2$ are isotyped, then the categories of
elementary sets $LK_\Theta(H_1)$ and $LK_\Theta(H_2)$ (\cite{PZ}) are 
isomorphic (see also Section 5).

\begin{remark}
Our definition of a type corresponds  to the notion of a complete type in Model theory.  A complete type is an $H-$closed ultrafilter defined by a single point. 
However, an arbitrary   $H-$closed set is defined by a set of points. In this case also there are  relations with the model theoretic general theory of types. In particular, the $H-$closure of an arbitrary  type is always an intersection of complete types.
\end{remark}

\subsection{Infinitary logic}
Let us make some remarks on the relations with infinitary logic. We
start from quasiidentities. Take a binary relation $T$ in $W=W(X)$.
Consider a formula  (quasiidentity) $$(\bigwedge_{(w,w')\in T} (w \equiv w')) \to
(w_0 \equiv w'_0).\qquad\qquad (*)$$ If $T$ is infinite, it is an infinitary
quasiidentity.

Let, further, $H$ be an algebra in $\Theta$ and consider 
$T''_H$. It is proved in \cite{P5} that $(w_0, w'_0) \in T''_H$ if and
only if the quasiidentity  $(*)$ holds in $H$.

Proceed now from $T \subset \Phi(X)$ and consider a formula
$$(\bigwedge_{u \in T} u) \to v, \qquad\qquad (**)$$ where $v \in \Phi(X)$. The formula $(**)$ is
infinitary if T is an infinite set. We write for short $T \to v$. This
formula holds in $H$ if and only if $v \in T^{LL}_H$ \cite{P8}. Denote
by $Im(Th)(H)$ the set of all formulas of the form $T \to v$, holding in
$H$. It is the {\it implicative theory} of the algebra $H$. We may say that
the algebras $H_1$ and $H_2$ are $LG$-equivalent (isotyped), if and only if
their implicative theories coincide, i.e., $Im(Th)(H_1)=Im(Th)(H_2)$ (see \cite{PZ}).
The presence here of infinitary formulas is naturally stipulated by the aims of universal algebraic geometry. Such
formulas are not new also in Model Theory. In particular, they
participate in the theory of abstract elementary classes
($AEC$-classes) of models \cite{G}.

\subsection{Galois correspondence and morphisms in $Hal_\Theta^0$}

Let us show the relation between the Galois correspondence and morphisms in the category $Hal_\Theta^0$.
Let $s: W(X)\to W(Y)$ and $s_*: \Phi(X)\to \Phi(Y)$ be given. For every $u\in \Phi(X)$ we have
$$
Val^Y_H(s_*u)=s_*Val_H^X(u),
$$
Here, $\mu\in s_*Val_H^X(u)$ if $\mu s\in Val^X_H(u)$.

For $T\subset \Phi(X)$ denote by $s_*T$ the subset in $\Phi(Y)$ which consists of $s_*u$, $u\in T$. We have
  $$(s_*T)^L_H=s_*T^L_H,$$
   \noindent
  (see \cite{P7}). Thus, if $A=T^L_H$ is an elementary set in $Hom(W(X),H)$ then $s_*A$ is an elementary set in
$Hom(W(Y),H)$. As usual $\mu\in s_*A$ if $\mu s\in A$.

Connect now $X$ and $Y-$types  over $H$ for the given $s: W(X)\to W(Y)$ and $s_*: \Phi(X)\to \Phi(Y)$. For each point $\nu: W(X)\to H$ we  denote by $LKer^X(\nu)$ its logical kernel, having in mind that this kernel is calculated in $\Phi(X)$. Let us check that
$$s_*LKer^X(\mu s)\subset LKer^Y(\mu)$$
\noindent
  for   $\mu: W(Y)\to H$.

Let $u\in \Phi(X)$ and let $u\in LKer^X(\mu s)$. This gives $\mu s \in Val^X_H(u)$. Thus $\mu \in s_*Val^X_H(u)=Val^Y_H(s_*u)$, i.e., $s_*u\in LKer^Y(\mu)$. The inclusion is checked.

Apply once again the $L$-transition. We have:
 $$
 (LKer^Y(\mu))^L_H=B\subset (s_*LKer^X(\mu s))^L_H=s_*(LKer^X(\mu s))^L_H=s_*A.
$$
Here, $B$ is the closure of the point $\mu$, $A$ is the closure of the point $\mu s$ and
$B\subset Hom(W(Y),H)$, $A\subset Hom(W(X),H)$, $s_*A\subset Hom(W(Y),H)$, and $B\subset s_*A$.
$B$ and $A$ are minimal elementary sets (i.e., they do not contain other elementary sets), while $s_*A$ is not necessarily a minimal set.

\subsection{Relations $\rho$ and $\tau$}

Along with the relation $\rho$ consider relations $\rho_0$ and
$\tau$ on the given set $Hom(W(X),H)$. The relation $\rho_0$ is determined by the
decomposition of $Hom(W(X),H)$ into orbits of the group $Aut(H)$.
The inclusion $\rho_0 \subset \rho$ always holds, but we are
interested in the situation of the equality $\rho_0 = \rho$.

Let $X=\{x_1, \ldots , x_n \}$ and let $$(\mu(x_1), \ldots , \mu(x_n)) =
\bar a = (a_1, \ldots , a_n),$$  $$(\nu(x_1), \ldots , \nu(x_n)) =
\bar b = (b_1, \ldots , b_n)$$ for the points $\mu$ and $\nu$.
Denote by $A$ and $B$ subalgebras in $H$, generated by all $a_1,
\ldots , a_n$ and all $b_1, \ldots , b_n$ respectively. We set: $\mu
\tau \nu$ if and only if the transitions $a_i \to b_i$ determine the
isomorphism $\eta : A \to B$. We have $\rho _0 \subset \tau$.
Actually the following theorem takes place:
\begin{theorem}
The condition $\mu \tau \nu$ holds true if and only if $AtKer(\mu)=
AtKer(\nu)$ in $\Phi(X)$ or what is the same $Ker(\mu)=
Ker(\nu)$ in $W(X)$ .
\end{theorem}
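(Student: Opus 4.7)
The plan is to establish the chain of equivalences in two independent directions: first show the trivial equivalence between $AtKer(\mu)=AtKer(\nu)$ and $Ker(\mu)=Ker(\nu)$, and then prove the substantive equivalence between $Ker(\mu)=Ker(\nu)$ and $\mu\tau\nu$ via the universal property of the free algebra $W(X)$.

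For the first equivalence, I would unfold the definitions. By construction, $w\equiv w'\in AtKer(\mu)$ iff $\mu\in Val^X_H(w\equiv w')$ iff $\mu(w)=\mu(w')$ iff $(w,w')\in Ker(\mu)$; and dually $w\not\equiv w'\in AtKer(\mu)$ iff $(w,w')\notin Ker(\mu)$. Since every atomic formula or negation of an atomic formula falls in one of these two camps, $AtKer(\mu)$ is entirely determined by $Ker(\mu)$ and vice versa, yielding the equivalence at once.

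For the second (main) equivalence, I would use that $W(X)$ is free on $X$ in $\Theta$, so every homomorphism into $H$ is determined by its values on $x_1,\dots,x_n$. In the $(\Rightarrow)$ direction, assume $\mu\tau\nu$ with isomorphism $\eta:A\to B$ sending $a_i\mapsto b_i$. Then $\eta\circ\mu$ and $\nu$ are two homomorphisms $W(X)\to H$ agreeing on the generators $x_i$, hence $\eta\circ\mu=\nu$ on all of $W(X)$. Since $\eta$ is injective, $\mu(w)=\mu(w')$ iff $\nu(w)=\nu(w')$, which gives $Ker(\mu)=Ker(\nu)$. In the $(\Leftarrow)$ direction, assume $Ker(\mu)=Ker(\nu)$. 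Using the standard first-isomorphism-theorem factorization $\mu=\iota_A\circ\bar\mu$ with $\bar\mu:W(X)/Ker(\mu)\to A$ an isomorphism (and similarly $\bar\nu:W(X)/Ker(\nu)\to B$), I would define $\eta=\bar\nu\circ\bar\mu^{-1}:A\to B$; the hypothesis $Ker(\mu)=Ker(\nu)$ is exactly what makes this composition well defined and an isomorphism. Checking $\eta(a_i)=\eta(\mu(x_i))=\nu(x_i)=b_i$ shows $\mu\tau\nu$.

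The only mildly subtle point is to make sure one uses the correct characterization of $A$ and $B$: $A$ is the image of $\mu$ (which coincides with the subalgebra generated by the $a_i$'s because $W(X)$ is generated by the $x_i$'s, and $\mu$ is a homomorphism), and similarly for $B$. Once this observation is in place, both directions are essentially a direct application of the universal property of $W(X)$ together with the first isomorphism theorem in the variety $\Theta$, so I expect no real obstacle beyond bookkeeping.
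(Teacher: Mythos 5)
Your proposal is correct and follows essentially the same route as the paper: the forward direction uses that $\eta\circ\mu$ and $\nu$ agree on the generators of the free algebra (hence coincide, and injectivity of $\eta$ transfers the kernel), and the converse builds $\eta$ from the equality of kernels via the factorization of $\mu$ and $\nu$ through $W(X)/Ker(\mu)$. Your explicit remarks on the triviality of $AtKer(\mu)=AtKer(\nu)\Leftrightarrow Ker(\mu)=Ker(\nu)$ and on $A$ being the image of $\mu$ only make more careful what the paper leaves implicit.
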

\begin{proof}
Let $\mu \tau \nu$ hold and the corresponding isomorphism $\eta:A
\to B$ be given. Take $w \equiv w' \in AtKer(\mu)$. We have $w^\mu =
{w'}^\mu$ in $A$ and $w^{\mu\eta} = {w'}^{\mu\eta}$ in $B$. The
equality $w^\nu = {w'}^\nu$ holds in $B$ and, hence, $w \equiv w'
\in AtKer(\nu)$. Similarly, if $w \not \equiv w' \in AtKer(\mu)$,
then $w \not \equiv w' \in AtKer(\nu)$. Thus $AtKer(\mu) \subset
AtKer(\nu)$. The second direction is similar.

Let us check the converse statement. Consider homomorphisms $\alpha:W(X) \to A$ and
$\beta:W(X) \to B$, where $X=\{x_1, \ldots , x_n \}$,
$\alpha(x_i)=a_i=\mu(x_i)$, $\beta(x_i)=b_i=\nu(x_i)$. Let $w(x_1,
\ldots , x_n) =w'(x_1, \ldots , x_n )$ lie in the kernel $Ker(\alpha)$.
Then $w\equiv w' \in AtKer(\mu)= AtKer(\nu)$. Therefore, $w\equiv w'
\in Ker(\beta)$. More precisely, $w\equiv w' \in Ker(\alpha)$ if and
only if $w\equiv w' \in Ker(\beta)$, i.e., $ Ker(\alpha)=
Ker(\beta)$. We get an isomorphism $A \to B$ induced by
 $a_i \to b_i$.
\end{proof}

It is
clear that $LKer(\mu)= LKer(\nu)$ implies $AtKer(\mu)= AtKer(\nu)$
which also means that $\rho \subset \tau$. This gives us $\rho_0\subset\rho \subset \tau$ and if $\tau = \rho_0$ for
the given $X$ and $H$ then $\rho = \rho_0$.

Let us show that every coset of the relation $\tau$ is an elementary set.
Let $T$ be an arbitrary congruence on the algebra $W=W(X)$. Consider the set of formulas 
in $\Phi(X)$  defined by
 $T' =T'_1 \cup T'_2$, where $T'_1= \{w \equiv w' |
(w,w') \in T \}$ and  $T'_2= \{w \not\equiv w' |(w,w') \not\in T \}$.
It is easy to understand that a point $\mu : W(X) \to H$ satisfies
the set $T'$, that is  $\mu \in (T')^L_H$, if and only if $Ker(\mu)=T$.
Hence, the coset of the relation $\tau$ containing the point $\mu$ is an  elementary set defined by the set of formulas $T'$ with $Ker(\mu)=T$.
 We will
always use this remark  dealing with the relation $\tau$.

Note that the main future problem in this paper is to find the conditions on algebra $H$
which provide isomorphism $\eta :A \to B$ to be realized by some
automorphism of the algebra $H$. In this case we have $\tau=\rho_0$.

\subsection{Isotypeness of points}

Along with the notion of isotypeness of algebras we
introduce also the notion of  isotyped  points over algebras.
We say that {\it two
$X$-points $\mu, \nu: W(X) \to H$ are isotyped} if $LKer (\mu)= LKer
(\nu)$ which means that $\mu \rho \nu$ holds true. 

This notion lead to the definition
of  {\it logical isotypeness also for  elements} of the algebra $H$. We say that the elements $a_1$ and $a_2$  of an algebra $H$ are isotyped if the corresponding points $\mu, \nu\in Hom(W(x),H)$  with $\mu(x)=a_1$ and $\nu(x)=a_2$ are isotyped. Actually the notion of isotyped elements of $H$ is related to the following  idea.

 We proceed from a formula or a set
of formulas  $T$. 
Let the elements $a_1$ and $a_2$ from $H$ with the corresponding points $\mu, \nu\in Hom(W(x),H)$   be isotyped. Then the points
$\mu$ and $\nu$ are isotyped and let $T$ belong to $LKer(\mu)$. Then $T$
belongs to $LKer(\nu)$ as well. So, both points satisfy $T$ and let formulas from $T$ describe some algebraic property of elements in $H$. Isotypeness  of $a_1$ and $a_2$ means that  these elements both satisfy this algebraic property.
 For example,
 if $g$ and $g'$ are isotyped and $g$ has a finite
order $n$, then  $g'$ has the same order. Here $T$ consists of the formula $x^n
\equiv 1$.

Consider another example of engel elements and nil-elements in groups.
Recall that an element $g \in H$ is an $n$-engel element, if  there is $n=n(g)$ such that $[a,g,\ldots ,g]=1$ for any $a
\in H$. Here $[a,g,\ldots ,g]$ stands for a  composite commutator, where
$g$ is repeated $n$ times. Thus, the point $\mu$ with $\mu(x)=a$, $\mu(y)=g$ satisfies the formula
$\forall x([x,y, \ldots ,y]=1)$ if and only if $g$ is an $n$-engel
element. This formula is $T$
 .

Proceed further from $\mu, \nu : W(x,y) \to H$, $\mu(x)=a$, $\mu(y)
=g$, $\nu(x)=a'$, $\nu(y) =g'$. Let $\mu$ and  $\nu$ be isotyped (i.e., $g$ and $g'$ are isotyped) and
the element $g$ be an $n$-engel one. Then $g'$ is engel  as well. These remarks imply that if $H$ is
noetherian group and $g$ belongs to its nilpotent radical, then $g'$
belongs to it as well. See
\cite{Ba} and \cite{P2}.The similar fact is true for the solvable radical of a
noetherian group for the corresponding $T$ and isotyped $g$ and $g'$ (see \cite{P10}).

Let us pass to nil-elements. An element $g$ of a group $G$ is a
nil-element, if for every $a \in G$ there exists $n = n(a,g)$ such that  $[a, g, \ldots ,g] =1$,
where $g$ is taken $n$ times. The
property of being nil-element is not expressed as a formula, since
the definition uses a quantifier of the type $\exists n$ for natural
$n$.

We can improve the situation in the following way. Fix two elements $g$ and $g'$ in
the group $G$ and consider all possible pairs $(a,g)$ and $(a', g')$ where $a$ and $a'$ are one-to-one
related. Suppose that  the corresponding $\mu, \nu :
W(x,y) \to H$ are isotyped. Assume now that $g$ is a nil-element. Then the pair $(a,g)$ satisfies
an equation $[x, y, \ldots , y]=1$. Isotypeness of $\mu, \nu$ implies that the  pair $(a',g')$ satisfies the
same equation. This holds for every appropriate $a$ and $a'$, and,
hence, $g'$ is also nil-element. 

Let now $G$ be a solvable group (or, more generally, radical
group (\cite{P1})), and  $g$ and $g'$ be its isotyped elements. Then, if one of them
belongs to a locally nilpotent radical, then the second one does. It
follows from the previous remarks and from \cite{P1}. We can also
consider various other radicals in other groups. A lot of natural problems arise in this way.

\section{Logical noetherianity} \label{sec:LN}
\subsection{Definitions and problems}\label{sub:dfn}
Let $H$ be an algebra from $\Theta$. Consider three conditions of noetherianity for $H$.

1. An algebra $H$ is {\it logically noetherian}, if for any finite $X$ an arbitrary
elementary set $A \subset Hom(W(X),H)$ is {\it finitely definable}. This means that if $A=T^L_H$ then there exists a finite set $T_0$ (not necessarily a subset of $T$) such that $A=(T^0)^L_H$.

2. An algebra $H$ is {\it strictly logically noetherian}, if for any finite $X$ and arbitrary
elementary set $A =T^L_H\subset Hom(W(X),H)$ there exists a finite set $T_0\subset T$ such that $A=(T_0)^L_H$.

3. An algebra $H$ is {\it weakly logically noetherian} if for any $T \to v$ which holds in $H$ there exists a finite subset $T_0\subset T$ (possibly depending on $v$) such that $T_0 \to v$ holds in $H$.


For each of these noetherian conditions isotypeness of the algebras $H_1$
and $H_2$  is equivalent to their
elementary equivalence.

Clearly, every finite algebra is logically
noetherian in any sense. In Model Theory there exist examples of infinite logically noetherian algebras  (see also Proposition \ref{prop:inf_ab_gr}).

We will use now the following remark. All elementary
sets in the given $Hom(W(X),H)$ constitute a
sub-lattice in the lattice of all subsets. Dually, we have a lattice
of all $H$-closed filters in $\Phi(X)$. 
Strict
noethrianity of $H$ is equivalent to the noetherianity condition
of the corresponding lattice of filters and, what is the same, to the
condition of artinianity of the lattice of elementary sets for a finite
$X$.

The principal problem related to  Model Theory  is as follows:

\begin{problem}
To develop a general approach for constructing  logically noetherian algebras $H$ in different varieties $\Theta$.
\end{problem}

Here $\Theta$ can be the variety of groups, the variety of abelian groups, the variety of commutative associative algebras. For example, we have a problem to describe the logically noetherian abelian groups.

Let us make some remarks on algebras $H$ with the finite set of types  $S^X(H)$ 
for every $X$. It follows from the definitions that if the
set $S^X(H)$ is finite, then we have also a finite number of
different $H$-closed filters in $\Phi(X)$. We have also a finite
number of elementary sets in $Hom(W(X),H)$. This gives artinianity
and noetherianity of the corresponding lattices and strict
noetherianity of the algebra $H$. It is clear that if we have a
finite number of $Aut(H)$-orbits in $Hom(W(X),H)$, then the set
$S^X(H)$ is finite as well.

\subsection{Automorphic finitarity of algebras}

In this subsection we consider the question raised in Problem \ref{sub:dfn}.

\begin{defn}
Algebra $H$ is called automorphically ($Aut(H)$)-finitary, if there is a finite number of
$Aut(H)$-orbits in $Hom(W(X),H)$ for
any finite $X$.
\end{defn}

If $H$ is $Aut(H)$-finitary, then there is a finite number of types on $H$ and hence a finite number of elementary sets. Thus, $H$ is strictly logically noetherian.  So, there arise a question whether an algebra $H$ is automorphically finitary.

We shall specify the problem above for the case of groups:

\begin{problem}
To develop general methods for constructing  infinite automorphically
finitary groups.
\end{problem}

Consider an example.

\begin{proposition}\label{prop:inf_ab_gr}
Let $H$ be an infinite abelian group with the identity $x^p\equiv 1$
with prime $p$. Such a group is $Aut(H)$-finitary.
\end{proposition}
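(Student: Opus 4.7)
The plan is to identify the abelian group $H$ of exponent $p$ with an $\mathbb{F}_p$-vector space, so that the problem becomes a linear-algebra statement about $\mathrm{GL}_{\mathbb{F}_p}(H)$ acting on $n$-tuples of vectors. Concretely, since every nonzero element has order $p$, scalar multiplication by $k \in \mathbb{F}_p$ is well-defined (as $k$-fold addition), and a map $H \to H$ is a group automorphism if and only if it is an $\mathbb{F}_p$-linear automorphism. Because $H$ is infinite, its $\mathbb{F}_p$-dimension is infinite; fix once and for all a basis $E$ of $H$ over $\mathbb{F}_p$.

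Next I would describe the affine space. For $X = \{x_1, \dots, x_n\}$, the free algebra $W(X)$ in the subvariety of abelian groups of exponent $p$ is $\mathbb{F}_p^n$. A homomorphism $\mu : W(X) \to H$ is thus the same as an $\mathbb{F}_p$-linear map $\mathbb{F}_p^n \to H$, determined by the tuple $(a_1,\dots,a_n)=(\mu(x_1),\dots,\mu(x_n))\in H^n$. The key invariant of such a map under the action of $\mathrm{Aut}(H)=\mathrm{GL}_{\mathbb{F}_p}(H)$ is its kernel $V_\mu \subseteq \mathbb{F}_p^n$: if $\sigma \in \mathrm{Aut}(H)$ then $\mu\sigma$ has the same kernel as $\mu$. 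Hence the map $\mu \mapsto V_\mu$ is constant on $\mathrm{Aut}(H)$-orbits in $\mathrm{Hom}(W(X),H)$.

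The main step is to show this map is injective on orbits, i.e.\ that any two homomorphisms with the same kernel are conjugate by an automorphism of $H$. Given $\mu, \nu$ with $V_\mu = V_\nu = V$, factor both through the quotient $\mathbb{F}_p^n/V$ to obtain injections $\bar\mu,\bar\nu : \mathbb{F}_p^n/V \hookrightarrow H$. Their images $A = \mathrm{Im}\,\mu$, $B = \mathrm{Im}\,\nu$ are finite-dimensional subspaces of $H$ of the same (finite) dimension $d \le n$, and $\bar\nu \circ \bar\mu^{-1} : A \to B$ is an $\mathbb{F}_p$-linear isomorphism. Since $H$ has infinite dimension, both $A$ and $B$ admit complements in $H$, and those complements have the same infinite dimension; I would pick any $\mathbb{F}_p$-linear bijection between the complements and splice the two maps into a global automorphism $\sigma \in \mathrm{GL}_{\mathbb{F}_p}(H) = \mathrm{Aut}(H)$ with $\nu = \sigma\mu$. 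This is the crux of the argument—the finite-dimensional case would fail here, and it is exactly infiniteness of $H$ that is used.

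Finally, I would conclude by counting: the invariant $V_\mu$ ranges over the set of $\mathbb{F}_p$-subspaces of $\mathbb{F}_p^n$, which is finite (it is the disjoint union of the finite Grassmannians $\mathrm{Gr}_k(\mathbb{F}_p^n)$ for $0 \le k \le n$). Therefore $\mathrm{Hom}(W(X),H)$ has only finitely many $\mathrm{Aut}(H)$-orbits, for every finite $X$, which is exactly the assertion that $H$ is $\mathrm{Aut}(H)$-finitary. The only delicate point is the splicing argument in the middle step, and everything else is bookkeeping.
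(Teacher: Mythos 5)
Your proof is correct and follows essentially the same route as the paper's: both take the kernel of $\mu$ (a subgroup, equivalently an $\mathbb{F}_p$-subspace, of the finite free algebra $W(X)\cong\mathbb{F}_p^n$) as the complete orbit invariant, extend the induced isomorphism of images to a global automorphism by choosing complements and using infiniteness of $H$ to match them, and conclude by the finiteness of the set of possible kernels. The only difference is cosmetic (explicit linear-algebra language versus the paper's group-theoretic phrasing, and the paper additionally writes down a defining formula for each orbit to record that it is an elementary set, which it uses afterwards).
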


\begin{remark} This proposition is not unexpected in view of  the well known model theoretic result by  Ryll-Nardzewski  (see \cite{CK},\cite{Ma}) which basically states that a complete theory is  $\aleph_0$-categorical if and only it has a finite number of types.
However, for the sake of
completeness we present an independent simple proof of the
proposition \ref{prop:inf_ab_gr}.

\end{remark}

\begin{proof}
Proceed from the variety $\Theta$ of all abelian groups with the
identity $x^p\equiv 1$,  $p$ is prime. Fix $X=\{x_1, \ldots , x_n\}$
and let $W=W(X)$ be a free in $\Theta$ group over $X$. This group is
finite. Let $T$ be a subgroup of $W$.

Take a formula $u = (\bigwedge_{u_i \in T} (u_i \equiv 0)) \wedge
(\bigwedge_{v_i \not \in T} (v_i \not \equiv 0))$. Denote
$Val^X_H(u) =V$. Obviously, a point $\mu :W\to H$ belongs to $V\subset Hom(W(X),H) $ if
and only if $Ker(\mu) =T$.

Let now $\mu$ and $\nu$ be two points in $V$, $A$ and $B$ their
images in $H$. Then $A=<a_1, \ldots , a_n>$, $B=<b_1, \ldots ,
b_n>$. Since $Ker(\mu) =Ker(\nu)$, we have a commutative diagram


$$
\CD
A @[2]> \sigma  >> B\\
 @[2]/NW/\mu//@.@.\;    @/NE//\nu/\\
 @.W(X) \\
\endCD
$$


Here $\sigma$ is an isomorphism, $\sigma(a_i)=b_i$ and $\mu \sigma =
\nu$. Subgroups $A$ and $B$ have complements in $H$, i.e.,
$A\bigoplus A' = B\bigoplus B' =H$. Since $H$ is infinite, $A'$ and
$B'$ are infinite and isomorphic. This means that the isomorphism
$\sigma: A \to B$ can be extended up to an automorphism $\sigma \in
Aut(H)$. Therefore $\mu$ and $\nu$ are conjugated by this
automorphism. Besides, sequences $(a_1, \ldots , a_n)$, $(b_1,
\ldots , b_n)$ are also conjugated by the automorphism $\sigma$.
This means that the set $V$ belongs to $Aut(H)$-orbit, determined by
each of the points $\mu$ and $\nu$. On the other hand, since $V$ is
an elementary set, the pointed orbit is contained in $V$. Therefore,
$V$ is equal to the  orbit containing $\mu$.

Every orbit, determined by a point $\mu : W \to H$ is of this form.
Indeed, take $T = Ker(\mu)$ and a formula $u$ constructed by $T$.
Then $V=Val^X_H(u)$ is an $Aut(H)$-orbit, determined by the point
$\mu$.

Since there are finite sets of different $T$ and $u$, we have a
finite set of orbits. Moreover, all of them are one-defined elementary sets.
The group $H$ is an infinite strictly logically  noetherian group.

\end{proof}

Note that we use in this proof  a representation of the
point $\mu\in H^{(n)}$ as a point - homomorphism $\mu : W \to H$.

A similar situation holds in finitely dimension vector spaces over
arbitrary finite field. If the field is infinite, then the
description of orbits is the same, but the number of orbits is  infinite.
 Various other examples of infinite automorphically
finitary (and thus strictly logically noetherian) groups  were suggested to me by
A.Olshansky (algebraic approach) and B.Zilber (model theoretic
approach). In their constructions the groups are far from being abelian.

In this concern there is a
problem whether there exist infinite noetherian groups which are
also logically noetherian. Noetherian polycyclic groups and their
finite extensions are not the case. Infinite cyclic group is also not logically noetherian.

Let us conclude this section with one useful remark.


Suppose that $H_1$ and $H_2$ are isotyped and $H_1$ is strictly logically
noetherian.  Then $H_2$ is strictly logically noetherian too.

Indeed, let $H_1$ and $H_2$ be isotyped and $H_1$ be strictly logically noetherian. Then for $T\subset \Phi(X)$
and some finite part $T_0 \subset T$ we have $T^{LL}_{H_2}=T^{LL}_{H_1}={T_0}^{LL}_{H_1}=
{T_0}^{LL}_{H_2}$ and, hence, $T^{LL}_{H_2}={T_0}^{LL}_{H_2}$. Therefore, $H_2$ is strictly
logically noetherian.




\section{Isotypeness and isomorphism}
\subsection{Separable algebras}
Another general problem is to study relations between isotypeness property and isomorphism of algebras.
First of all, there are various examples of non-isomorphic isotyped algebras. Even for the case of fields
there are  examples of such kind .  
 We will present examples of isotyped but not isomorphic algebras in Subsection \ref{ex}.


\begin{defn}\label{sep}
An algebra $H\in \Theta$ is called separable in $\Theta$ if  each
$H' \in \Theta$ isotyped to $H$ is isomorphic to $H$.
\end{defn}


\begin{remark}
Sometimes it is worth to modify Definition \ref{sep} and to consider only finitely-generated $H'$.
\end{remark}

\begin{problem}
For which $\Theta$ every free in $\Theta$ algebra $W=W(X)$ with
finite $X$ is separable?
\end{problem}

In other words this problem asks when every free in $\Theta$ algebra $W=W(X)$ with
finite $X$ can be distinguished in $\Theta$ by  means of logic of types (i.e.,  LG-logic).

The problem is stated for an arbitrary variety $\Theta$, but most of
all we are interested in the  variety of groups $\Theta=Grp$ and the
variety $\Theta=Com-P$ of commutative and associative algebras over
a field $P$.

Z.Sela 
(unpublished) showed that free noncommutative groups $F(X)$ and $F(Y)$ are isotyped
if and only if they are isomorphic.

\subsection{Examples of non-isomorphic isotyped algebras}\label{ex}

\begin{proposition}\label{prop:inf_dim}
Let algebras $H_1$ and $H_2$ be infinitely dimension vector spaces over a
field $P$. Then they are isotyped.
\end{proposition}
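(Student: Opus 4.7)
The plan is to prove isotypeness directly by showing that every $X$-type realized in $H_1$ is realized in $H_2$ (and vice versa, by symmetry), working in the variety $\Theta$ of $P$-vector spaces so that $W(X) \cong P^{|X|}$ and a homomorphism $\mu: W(X) \to H$ is recorded by the tuple $\bar a = (\mu(x_1), \dots, \mu(x_n))$. Atomic formulas in $\Phi(X)$ are $P$-linear equations $\sum p_i x_i \equiv 0$, so $\AtKer(\mu)$ (equivalently $\Ker(\mu) \subset W(X)$) is precisely the subspace of linear relations satisfied by $\bar a$. I will follow the pattern of the proof of Proposition~\ref{prop:inf_ab_gr}: use automorphisms inside each $H_i$ to show that the type of a point depends only on its atomic kernel, and then invoke infinite dimension to realize the same atomic kernels across the two spaces.

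First, given $\mu: W(X) \to H_1$ with $\Ker(\mu) = K \subset W(X)$, I will pick any $\nu: W(X) \to H_2$ with $\Ker(\nu) = K$; such $\nu$ exists because $W(X)/K$ embeds in $H_2$ as $H_2$ is infinite-dimensional. This exhibits $\mu\, \tau\, \nu$ across the two spaces in the sense of Theorem~2 of Section~1 (equality of atomic kernels). It remains to upgrade $\tau$ to $\rho$, i.e.\ to show $\LKer(\mu) = \LKer(\nu)$ as ultrafilters in $\Phi(X)$.

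The key step is a standard back-and-forth argument, which I regard as the main obstacle (since it is the part that uses infinite dimension in an essential way beyond the atomic level). Consider the family of partial isomorphisms between finite tuples in $H_1$ and $H_2$ having the same linear relations over $P$. This family has the back-and-forth property: given a matched pair $(\bar a, \bar b)$ and any new element $c \in H_1$, if $c \in \mathrm{span}(\bar a)$ write $c = \sum p_i a_i$ and set $c' = \sum p_i b_i$; otherwise pick any $c' \in H_2 \setminus \mathrm{span}(\bar b)$, which exists because $H_2$ is infinite-dimensional while $\mathrm{span}(\bar b)$ is finite-dimensional. The reverse step is symmetric. A straightforward induction on formula complexity in $\Phi(X)$ (boolean cases are immediate, and the existential case is exactly what the back-and-forth is designed for) then yields $Val^X_{H_1}(u) \ni \mu \Leftrightarrow Val^X_{H_2}(u) \ni \nu$ for every $u \in \Phi(X)$, i.e.\ $\LKer(\mu) = \LKer(\nu)$.

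Thus every $T = \LKer(\mu) \in S^X(H_1)$ is of the form $\LKer(\nu) \in S^X(H_2)$, and symmetrically. Since $X$ was arbitrary finite, $S^X(H_1) = S^X(H_2)$ for all $X$, so $H_1$ and $H_2$ are isotyped.
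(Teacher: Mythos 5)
Your proposal is correct and follows essentially the same route as the paper: the paper's induction on ``correct'' formulas is precisely your back-and-forth on partial isomorphisms of finitely generated subspaces, with infinite dimension used both to extend the isomorphism in the existential step and to realize a point with the prescribed kernel in the other space. The only step the paper carries out that you omit is the additional induction case for the operations $s_*$ (elements of $\Phi(X)$ of the form $s_*v$ with $v\in\Phi(Y)$), which in this algebraic-logic formalism must be checked separately; it goes through by the same matching, since $\mu s$ and $\nu s$ are related by the same partial isomorphism as $\mu$ and $\nu$.
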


\begin{proof}
We use a method which can be applied in other cases as well.
Proceed from a variety $\Theta$ of vector spaces over $P$.
Take $X=\{x_1, \ldots , x_n \}$ and let $W=W(X)$ be the corresponding free object, i.e., an arbitrary linear space of dimension $n$.
Consider points $\mu:W \to H_1$ and  $\nu:W(X)\to H_2$. A sequence $\bar a=( a_1, \ldots, a_n)$,
$a_i= \mu (x_i)$ corresponds to the point $\mu$. Similarly, we have  $\bar b=( b_1, \ldots, b_n)$
for $\nu$. Denote $A=< a_1, \ldots, a_n >$ and $B= <b_1, \ldots, b_n >$.
The points $\mu$ and $\nu$ we call isomorphic, if there is an isomorphism $\alpha : A \to B$
with $\alpha (a_i)=b_i$. We write $\alpha \mu = \nu$. We have also $\mu = \alpha^{-1} \nu$.
If $H_1=H_2$ then isomorphism of the points $\mu$ and $\nu$   means  that $\mu$ and $\nu$
satisfy the relation $\tau$.

Along with the free algebra $W=W(X)$ consider the algebra of formulas
$\Phi =\Phi(X)$. A formula $u \in \Phi$ is called correct, if for
 isomorphic $\mu$ and $\nu$ the inclusion $\mu \in Val_{H_1}(u)$, i.e.,
$u \in LKer(\mu)$, holds if and only if $u \in LKer(\nu)$, i.e., $\nu \in
Val_{H_2}(u)$.

We intend to check that in our case every formula $u$ is correct. It is easy to see that for an arbitrary  $\Theta$ all
the equalities
$w \equiv w'$ are correct,  if $u$ is correct, then its negation $\neg u$
is correct, and  if $u_1$ and $u_2$ are correct, then $u_1 \vee u_2$ and
$u_1 \wedge u_2$ are correct as well. 


 However the implication if $u$ is correct, then so is
 $\exists x_i u$ for every $x_i$ is not probably true for arbitrary $\Theta$. Here there arises a question to find an example when the correctness condition is not fulfilled. We shall check that in the situation under consideration the implication is valid.

Without loss of generality take $x_i=x_1$. So, let $\mu$ and $\nu$
be isomorphic, $\nu = \alpha \mu$, $\mu \in Val_{H_1}(\exists x_1 u)
= \exists x_1 Val_{H_1}(u)$. There exists $\mu_1 \in Val_{H_1}(u)$
with $\mu (y) = \mu _1 (y)$ for each $y \not = x_1$, $y \in X$.

Take $a=\mu_1(x_1)$. For $\mu_1$ we have a sequence $(a, a_2,
\ldots, a_n )$.  Recall that $\bar a= (a_1, a_2, \ldots, a_n )$ and
$\bar b= (b_1, b_2, \ldots, b_n )$, $b_i=\alpha '(a_i)$, for $\mu$
and $\nu$ respectively. Take $A_1= <a, a_2, \ldots, a_n >= <a,< a_2,
\ldots, a_n >>$. We want to investigate an isomorphism $\alpha_1:
A_1 \to B_1$ with $B_1= <b, < b_2, \ldots, b_n >>$, where $b_1
=\alpha _1(a)$, $b_i =\alpha_1(a_i) = \alpha(a_i)$, $i=2, \ldots,
n$. The isomorphism $\alpha: A \to B$ induces an isomorphism
$\alpha': < a_2, \ldots, a_n > \to < b_2, \ldots, b_n >$. Suppose
first that $ a \in < a_2, \ldots, a_n >$. In this case we have an
isomorphism $\alpha_1 : A_1 \to B_1$ with $\alpha_1(a)=
\alpha'(a)=b$. Let $a\notin < a_2, \ldots, a_n >$. Then
$A_1$ is a vector space with the  dimension greater by one than the
dimension of the space $< a_2, \ldots, a_n >$. Take an arbitrary $b
\in H_2$  which does not lie in $< b_2, \ldots, b_n >$. We have a
vector space $B_1=<b, < b_2, \ldots, b_n >>$ of the same dimension
as  $A_1$. Assuming $\alpha _1(a)=b$ we determine an isomorphism
$\alpha_1 : A_1 \to B_1$, extending the isomorphism $\alpha'$.

Take further $\nu_1:W \to H_2$ defined by the rule $\nu_1(x_i)= \alpha_1
\mu_1(x_i)$, $i=1, \ldots, n$. Here $\mu_1$ and $\nu_1$ are
isomorphic. Since $\mu_1 \in Val_{H_1}(u)$, then $\nu_1 \in
Val_{H_1}(u)$ due to correctness of
 the formula $u$. The points
$\nu$ and $\nu_1$ coincide on the variables $x_2, \ldots ,x_n$ by
the construction. Hence, $\nu \in  Val_{H_2}(\exists x_1 u)= \exists
x_1 Val_{H_2}(u)$. Similarly, if $\nu \in \exists x_1 Val_{H_2}(u)$,
then $\mu = \alpha^{-1}\nu \in Val_{H_1}(\exists x_1 u)$.

Show that if
$u \in \Phi(Y)$ is correct then the formula $s_* u\in
\Phi(X)$ where $s: W(Y) \to W(X)$ and $s_* : \Phi(Y) \to \Phi(X)$ (see Section \ref{sec:appendix} for the
definition of the mapping $s_*$) is correct as well. Let $\mu$ and $\nu$ be isomorphic by
the isomorphism $\alpha$, and $\mu \in Val_{H_1}(s_* u) = s_*
Val_{H_1}(u)$. Here $\mu s \in Val_{H_1}(u)$. Apply the isomorphism
$\alpha$ with $\nu= \alpha \mu$. This gives the isomorphism $(\alpha
\mu)s = \nu s=\alpha (\mu s) $, and $\nu s$ and $\mu s$ are
isomorphic.

Suppose $u$ is correct. Then $ \nu s \in Val_{H_2}(u)$ and $\nu \in
s_* Val_{H_2}( u)= Val_{H_2}(s_* u)$. The formula $s_* u$ is also
correct. Using the definition of the algebra $\Phi(X)$ 
and the fact that all the equalities are correct,
we may conclude that all $u \in \Phi(X)$ are correct. If $\mu$ and
$\nu$ are isomorphic, then for any $u \in \Phi(X)$ we have $\mu \in
Val_{H_1}( u)$ if and only if $\nu \in Val_{H_2}( u)$.

Let now $\mu$ and $\nu$ be isomorphic. Since $u$ is correct we have $u \in LKer(\mu)$ if and
only if $u \in LKer(\nu)$. Thus $LKer(\mu)=LKer(\nu)$. If the dimensions of $H_1$ and $H_2$ are infinite, for any point
$\mu$ we can construct a point $\nu$ isomorphic to it. 
Hence, for each point $\mu\in Hom(W(X),H_1)$  there is $\nu\in Hom(W(X),H_2)$
with $LKer(\mu)= LKer(\nu)$.

The opposite is also true. This means that $H_1$ and $H_2$ are
isotyped. It is clear that they are not necessarily isomorphic.
\end{proof}

The  method from the proposition above can be used also in the case when $H_1$ and $H_2$
are infinite abelian groups of the finite exponent $p$. From the
other hand, what can be said if $H_1$ and $H_2$ are free abelian
groups of infinite range, or free noncommutative groups of infinite
range? Using considerations similar to those from Proposition \ref{prop:inf_dim}
we may, in particular, study locally cyclic torsion-free groups
to see if they are isotyped. This also should give examples of isotyped
but not isomorphic algebras.

\begin{remark}
In fact, the proof of Proposition \ref{prop:inf_dim} follows already from the freeness of vector spaces. We gave a detailed proof having in mind applications to other situations.
\end{remark}

\section{Logically perfect algebras}
\subsection{Embedding of algebras}

Remind that we defined equivalence $\rho$ on the set (affine space)
$Hom(W(X),H)$. If $\mu$ and $\nu$ are two points $W(X) \to H$, then
$\mu \rho \nu$ if and only if $LKer(\mu)= LKer(\nu)$(see \ref{sub:lg}). These logical
kernels are calculated in the algebra of formulas $\Phi(X)$. A group
$Aut(H)$ acts on the set $Hom(W(X),H)$ by the rule $\mu \to \mu
\sigma$, $\sigma \in Aut(H)$. Here each elementary set in
$Hom(W(X),H)$ is invariant under the action of the group $Aut(H)$.
This implies that $\mu \rho (\mu \sigma)$ always hold true.

\begin{defn}
An algebra $H \in \Theta$ is called {\it logically perfect} if for any $X$
the relation $\mu \rho \nu$ holds if and only if the points $\mu$ and $\nu$ are
conjugated by an automorphism of the given $H$.
\end{defn}
This condition means that for any $X$ every coset of the relation
$\rho$ is an orbit of the group $Aut(H)$.  Hence, for logically perfect groups every orbit is an elementary set.

Let us formulate the following problem:
\begin{problem}
Consider conditions when a given algebra $H \in \Theta$ can be
embedded into a perfect algebra $H' \in \Theta$.
\end{problem}

This problem is closely related to the following well-known  results of
Model Theory \cite{Ma}:

1. For every finite set $X$, algebra $H \in \Theta$, and points $\mu, \nu : W(X)
\to H$ the condition $LKer(\mu)= LKer(\nu)$ is equivalent to the following one:
for an elementary embedding $H \to G \in \Theta$ there is $\sigma \in
Aut(G)$ with $\mu \sigma = \nu$.

2. There exists a large algebra $H \in \Theta$, such that for
every $X$ and points $\mu, \nu : W(X) \to H$ we have $LKer(\mu)=
LKer(\nu)$ if and only if $\mu \sigma = \nu$ for some $\sigma \in
Aut(H)$.

Both these theorems are theorems of existence and  rely on the compactness theorem. So they are highly non-constructive.
Our interest is a bit different. 
We look at the specific varieties $\Theta$. The question is how to realize the transition from $H$
to $H'$ by
constructions
in $\Theta$ (see, for example, Proposition 4.2, where $\Theta$ is the variety of groups).

 So, we want to find out  how logically perfect algebras look like. The first question is 


\begin{problem}\label{orb}
For which algebras $H \in \Theta$ every $Aut(H)$-orbit  in
$Hom(W(X),H)$  is an elementary set for any finite $X$?
\end{problem}

It was noted earlier that if $H$ is logically perfect then $Aut(H)$ orbits are the elementary sets. In fact, the opposite statement is true as well (Proposition \ref{perf}). This fact explains the importance of  Problem \ref{orb}.
\begin{prop}\label{perf}
An algebra $H$ is logically perfect if and only if every $Aut(H)$-orbit in $Hom(W(X),H)$ is an elementary set for every $X$.
\end{prop}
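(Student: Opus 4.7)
The plan is to prove both implications by unpacking the definitions, using in each direction one of the two ``basic facts'' already collected in the paper: (a) every coset of the equivalence $\rho$ is itself an elementary set (noted right after Definition \ref{defn:rho}); and (b) elementary sets are $Aut(H)$-invariant, so a point and any of its images under $Aut(H)$ lie in the same cosets of $\rho$. The forward direction is essentially immediate; the content is in the reverse direction, and even there it is just a one-line Galois-closure argument.

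For the forward direction, assume $H$ is logically perfect and fix any finite $X$ and any orbit $O$ of $Aut(H)$ in $Hom(W(X),H)$. By the definition of logically perfect, $O$ coincides with a coset of $\rho$. But as recalled in Subsection \ref{sub:lg}, every coset of $\rho$ is the elementary set defined by the ultrafilter $T = LKer(\mu)$ for any representative $\mu$; explicitly, this coset equals $T^L_H$. Hence $O$ is an elementary set.

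For the reverse direction, assume every $Aut(H)$-orbit in $Hom(W(X),H)$ is elementary, and let $\mu,\nu\in Hom(W(X),H)$ with $\mu\,\rho\,\nu$, i.e.\ $LKer(\mu)=LKer(\nu)$. I would denote by $O$ the $Aut(H)$-orbit of $\mu$ and write $O = T^L_H$ for some $T\subset\Phi(X)$. From $\mu\in O$ we get $T\subset LKer(\mu)$; by hypothesis this equals $LKer(\nu)$, whence $\nu\in T^L_H = O$. Thus $\nu=\mu\sigma$ for some $\sigma\in Aut(H)$. The converse implication ``$\mu$ and $\nu$ conjugate $\Rightarrow$ $\mu\,\rho\,\nu$'' is already observed in the paragraph preceding the proposition (every elementary set is $Aut(H)$-invariant, so $LKer(\mu\sigma)=LKer(\mu)$). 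Combining, $H$ is logically perfect.

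There is no real obstacle: the whole argument is a direct consequence of the Galois correspondence $T\mapsto T^L_H$ together with the characterization of $\rho$-cosets as minimal elementary sets. The only place one has to be slightly careful is to notice that in the reverse direction one does \emph{not} need to know $T = LKer(\mu)$ (the orbit could a priori be cut out by a strictly smaller set of formulas $T$); all one uses is the trivial inclusion $T\subset LKer(\mu)$, which suffices because the hypothesis $LKer(\mu)=LKer(\nu)$ propagates membership in $T^L_H$ from $\mu$ to $\nu$.
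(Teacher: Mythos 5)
Your proof is correct and follows essentially the same route as the paper's: the forward direction is the observation (already made after Definition \ref{defn:rho}) that every $\rho$-coset is the elementary set $(LKer(\mu))^L_H$, and the reverse direction is a short Galois-correspondence argument. Your reverse direction is in fact slightly more economical than the paper's: the paper first derives the identity $A^L_H = LKer(\mu)$ for the orbit $A$ containing $\mu$ (via $A^L_H\subset LKer(\mu)$, the $Aut(H)$-invariance of $\{\mu\}^{LL}_H$, and idempotence of the closure) and then compares $A^L_H$ with $B^L_H$, whereas you only use the immediate fact that two points with equal logical kernels lie in exactly the same sets of the form $T^L_H$, applied to the (elementary) orbit of $\mu$ -- correctly noting that one need not identify the defining set $T$ with $LKer(\mu)$.
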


\begin{proof} Let us fix a finite $X$ and take a point $\mu: W(X)\to H$. Let $A$ be an $Aut(H)$-orbit defined by $\mu$, $\mu\in A$. Suppose that $A$ is an elementary set. Then $A^{LL}_H=A$.
We have $A^L_H\subset \{\mu\}^L_H=LKer(\mu)$. Then $\{\mu\}^{LL}_H= LKer(\mu)^L_H\subset A^{LL}_H=A$. The point $\mu$ belongs to the elementary set $\{\mu\}^{LL}_H$ and thus the whole orbit $A$ lies in $\{\mu\}^{LL}_H$. We get the equality $\{\mu\}^{LL}_H=A$. We have also
$$
\{\mu\}^{LLL}_H=A^L_H=\{\mu\}^L_H=LKer(\mu).
$$
Thus, for every orbit $A$ we have  $A^L_H=LKer(\mu)$, where $\mu$ is a point from $A$.  We used that all orbits $A$ are elementary sets. Different types correspond to different orbits over $H$ and such correspondence exhausts all types related to $H$.

Let now $\mu\rho \nu$ and let $A$ be an $Aut(H)$-orbit over $\mu$, $B$ an $Aut(H)$-orbit over $\nu$. We have $LKer(\mu)=LKer(\nu)$ i.e., $A^L_H=B^L_H$, and thus $A=B$ since $A$ and $B$ are elementary sets.  This means that the points $\mu$ and $\nu$ belong to a common orbit. This gives $\mu\rho_0\nu$. Hence $\rho=\rho_0$ and thus the algebra $H$ is perfect.
\end{proof}

Consider the notion of strictly logically perfect algebras.

\begin{defn}
An algebra $H \in \Theta$ is called {\it strictly logically perfect} if for any  $X$
and points $\mu: W(X)\to H$ and $\nu: W(X) \to H$  the condition $LKer^0(\mu)=LKer^0(\nu)$ implies  $\mu = \nu \sigma$ for some $\sigma \in Aut (H)$.
\end{defn}
This notion is indeed more strict than being logically perfect in the usual sense. As it was done before one can prove that an algebra $H$ is strictly logically perfect if for any $X$ every $Aut(H)-$orbit in $Hom(W(X),H)$ is an elementary set defined by some set of formulas $T\subset \Phi^0(X).$ In this case we say that every orbit is a strictly elementary set.



\subsection{Method of $HNN$-extensions}
This subsection describes the case $\Theta = Grp$. We use here the
method of $HNN$-extensions (see \cite{HNN}).

Let $a_i$, $b_i$, $i \in I$ be two sets of elements in the group
$H$, $A$ and $B$ be subgroups in $H$, generated by all $a_i$ and
$b_i$ respectively. Consider a system of equations $t a_i t ^{-1}
=b_i$, $i \in I$. It is proved (Theorem 1 from \cite{HNN}) that such a
system has a solution for some $t$ belonging to a group $G$ containing
$H$, if and only if the subgroups $A$ and $B$ are isomorphic under
transition $a_i \to b_i$.

This theorem has a lot of applications. In particular, it implies
that for every torsion free group $H$ there exists a torsion free group $G$
containing $H$, such that there is only one non-trivial class of
conjugated elements in $G$. This means also the following. Let $G$
be such a group  and $W=W(x)$ a free
cyclic group. Consider the affine space $Hom(W,G)$ and an elementary
set $A$, defined by a single "equation" $x\not = 1$. 
As usual,
it is invariant under the action of the group $Aut(G)$. On the other
hand, any two elements $\mu$ and $\nu$ in the given elementary set
are conjugated by an element in $Aut(G)$. Hence, $A$ is the unique
nontrivial orbit of the group $Aut(G)$. We would like to build a
group $G$ in which something similar holds in
the affine space $Hom(W(X),G)$, for each finite $X$. Namely, we would like to construct a group $G$ by given group $H$ such that for any $X$ in the affine space $Hom(W(X),G)$ would be a finite number $Aut(G)-$orbits and each of the orbit should be an elementary set.

 Let us start from the situation when $H$ is a set without
operations.

Let us use the scheme from the proof of Proposition \ref{prop:inf_ab_gr}. Given an
equivalence $\tau$ on the set $X = \{x_1, \ldots, x_n\}$, consider
formulas $x_i \equiv x_j$ if $x_i \tau x_j$, and $x_i \not \equiv x_j$
otherwise. 
Let $u=u_\tau$ be a conjunction of all
such equalities and inequalities. Pass to $V=Val^X_H(u)$. A point
$\mu :X\to H$ belongs to $V$ if and only if $Ker(\mu)=\tau$. 
If $A$ is an image of the point $\mu$, then we have a
bijection $X / \tau \to A$. If $\mu$ and $\nu$ are two points in
$V$, then $Ker(\mu)=Ker(\nu)=\tau$, which gives a commutative
diagram


$$
\CD
A @[2]> \sigma  >> B\\
 @[2]/NW/\mu//@.@.\;    @/NE//\nu/\\
 @. X \\
\endCD
$$


Here $\sigma$ is a bijection and $B$ is the image of the point
$\nu$. The sets $A$ and $B$ are finite, while the set $H$ we regard
as infinite. We have $A\cup A' =H=B\cup B'$ where $A'$ and $B'$ are
of the same cardinality. This leads to the extension of the
bijection $\sigma$ up to a permutation $\sigma \in S_H$. According to
diagram $\nu = \mu \sigma$ and $\sigma(a_i)=b_i$, where
$a_i=\mu(x_i)$ and $b_i=\nu(x_i)$. Hence, every $V_\tau\tau
=Val^X_H(u_\tau)$ is an orbit of the permutation group $S_H$, and
these are all the orbits. Thus we obtained a finite number of  finitely defined orbits.


%
%

\subsection{Embedding theorem} In the proof of the embedding theorem we will use Theorem 2 from
\cite{HNN}:

Let $\eta_i : A_i \to B_i$, $i \in I$, be isomorphisms of subgroups
of the group $H$. There exists a group $G=<H,F>$, where $F$ is
freely generated by elements $t_i$ so that $\eta_i(a)=t_i a
{t_i}^{-1}$ for each $a \in A_i$.

Our next goal is the following:
\begin{theorem}\label{emb}
Each group can be embedded into a logically perfect group.
\end{theorem}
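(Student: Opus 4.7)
The plan is to build the required extension $G \supseteq H$ by iterating HNN extensions in the spirit of P.\,Hall's construction of ultrahomogeneous groups, and then to derive logical perfectness from ultrahomogeneity by using the inclusions $\rho_0 \subset \rho \subset \tau$ established in Section 1.7. The point is that making $\tau = \rho_0$ on $G$ automatically forces $\rho = \rho_0$, which is precisely the definition of logically perfect.

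For any group $K$, I would let $\mathcal{J}(K)$ be the collection of all triples $(A,B,\phi)$ with $A,B$ finitely generated subgroups of $K$ and $\phi:A \to B$ an isomorphism. Applying Theorem 2 of \cite{HNN} to the entire family $\mathcal{J}(K)$ simultaneously (or iterating the one-letter version well-orderedly over it), I obtain an embedding $K \hookrightarrow K^{\ast}$ together with stable letters $t_{(A,B,\phi)}$ such that conjugation by $t_{(A,B,\phi)}$ realizes $\phi$ on $A$. Setting $H_0 = H$ and $H_{n+1} = (H_n)^{\ast}$, the candidate group is the direct limit
$$
G \;=\; \bigcup_{n \ge 0} H_n .
$$

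Next I would verify that $G$ is \emph{ultrahomogeneous}: every isomorphism $\phi:A \to B$ between finitely generated subgroups of $G$ is induced by an inner automorphism of $G$. Indeed, finite generating sets of $A$ and $B$ lie in some $H_n$, so $(A,B,\phi)\in\mathcal{J}(H_n)$ and the stable letter corresponding to it in $H_{n+1}\subseteq G$ realizes $\phi$ by conjugation. To conclude, fix a finite $X=\{x_1,\dots,x_k\}$ and take $\mu,\nu:W(X)\to G$ with $LKer(\mu)=LKer(\nu)$. Since $\rho\subset\tau$, this forces $AtKer(\mu)=AtKer(\nu)$, and by the theorem characterizing $\tau$ in Section 1.7 the assignment $\mu(x_i)\mapsto\nu(x_i)$ is an isomorphism between the finitely generated subgroups $\langle\mu(x_1),\dots,\mu(x_k)\rangle$ and $\langle\nu(x_1),\dots,\nu(x_k)\rangle$ of $G$. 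Ultrahomogeneity of $G$ then produces $\sigma\in\Aut(G)$ with $\mu\sigma=\nu$, so $\rho=\rho_0$ on $Hom(W(X),G)$ for every finite $X$.

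The main obstacle I expect is technical rather than conceptual: one must check that Theorem 2 of \cite{HNN} indeed gives a genuine embedding when one adjoins stable letters for the entire, possibly uncountable, family $\mathcal{J}(K)$ at once, and that the chain $H_0\subseteq H_1\subseteq\cdots$ consists of honest inclusions so that the direct limit is well defined. If only the single-amalgamation form is available, one replaces the simultaneous step by a transfinite well-ordered iteration over $\mathcal{J}(K)$; this complicates the bookkeeping but not the substance of the argument. Once these standard HNN-theoretic points are in place, the direct-limit observation — that every isomorphism of finitely generated subgroups is eventually absorbed into an inner automorphism — completes the proof.
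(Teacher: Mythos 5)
Your proposal is correct and follows essentially the same route as the paper: both adjoin, via Theorem 2 of \cite{HNN}, stable letters realizing every isomorphism between finitely generated subgroups, iterate the construction countably, pass to the union, and then deduce $\rho=\rho_0$ from the resulting homogeneity together with the chain $\rho_0\subset\rho\subset\tau$. The only difference is bookkeeping (you index by triples $(A,B,\phi)$, the paper by pairs of $n$-tuples $\bar a\,\tau_n\,\bar b$), which does not change the substance.
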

This theorem seems to be a particular case of a similar model theoretic
result. We present here an independent group theoretic proof, using
$HNN$-theory.
\begin{proof}
Let $\bar a = (a_1, \ldots , a_n)$ and $\bar b = (b_1, \ldots ,
b_n)$ be two  sequences of elements in $H$, and let  $A$ and $B$ be subgroups, generated by the elements
$a_1, \ldots , a_n$ and $b_1, \ldots , b_n$, respectively. According to Theorem 1 from
\cite{HNN}, there exists a group $G= G_{(\bar a, \bar b)}$ containing
$H$ and such that some automorphism of the group $G$ takes $\bar a$
into $\bar b$ if and only if this determines an isomorphism $\eta :
A \to B$. Having
this in mind, consider an equivalence $\tau$ 
defined in $H^{(n)}$, $n$ is fixed. Define $\bar a \tau \bar b$ if there is an
isomorphism $\eta = \eta _{(\bar a, \bar b)}: A \to B$, extending
correspondence  $a_i \to b_i$. If $\bar a \tau \bar b$ holds true, then
we have a group $G= G_{(\bar a, \bar b)}$ with the element
$t=t_{(\bar a, \bar b)}$, such that $t$ determines an inner
automorphism of the group $G$, taking $\bar a$ to $ \bar b$. The
group $G$ can be represented as $G=<H,t>$. The equivalence $\tau$ is
automatically extended to the space $Hom(W(X),H)$, $X=\{x_1, \ldots
, x_n\}$.

Consider further the relations $\tau_n$ for every natural $n$. The
relations $\tau_n$ are defined on all sequences  $\bar a = (a_1,
\ldots , a_n)$ of the length $n$. A subgroup $A$ in $H$, generated
by the elements $a_1, \ldots , a_n$, corresponds to every sequence.
We write $A=A(\bar a)$. As before, $\bar a \tau_n \bar b$ if the
transitions $a_i \to b_i$ determine the isomorphism $\eta_{(\bar a,
\bar b)} : A(\bar a) \to B(\bar b)$. Consider elements $t_{(\bar a,
\bar b)}$, which freely generate the group $F_n$. According to \cite{HNN}, we
have $G_n=<H,F_n>$ so that every $t_{(\bar a, \bar b)}$ determines
an inner automorphism of the group $G_n$ which, like $\eta_{(\bar a,
\bar b)}$, transforms $a_i$ into $b_i$.

On the next step we vary $n$ and consider the relations $\tau_n$ for different
$n$. Take different $t_{(\bar a, \bar b)}$ with $\bar a\tau_n \bar
b$ for all $n$. Generate by these $t_{(\bar a, \bar b)}$ the free
group $F$. Then we have a group $G=<H,F>$. If $\bar a\tau_n \bar b$,
then $t_{(\bar a, \bar b)}$ induces an isomorphism $\eta_{(\bar a,
\bar b)}: A(\bar a) \to B(\bar b)$. All this is valid due to
Theorem 2 from \cite{HNN}.

Denote the group $G=<H,F>$ by $H'$. Iterating the transition $H \to
H'$ we get the increasing sequence of groups $H$, $H'$, $H''$,\ldots, etc. Denote by  $H^0$ the union of all these groups. Consider sequences $\bar a = (a_1,
\ldots , a_n)$, where $a_i \in H^0$. For every $n$ if $\bar a
(\tau_n) \bar b$, then there exists an element $t \in H^0$, such
that the inner automorphism $\hat t$ transforms $\bar a$ into $\bar
b$. 

Show now that for every $X=\{x_1, \ldots , x_n\}$ in $Hom(W(X),H^0)$
the equality $\tau =\tau_n=\rho_0$ holds true and every coset of
the relation $\tau$ is $Aut(H^0)$-orbit.

Consider $Hom(W(X),H^0)$. Take two points $\mu, \nu :W(X) \to H^0$.
The condition $\mu \tau \nu$ means that $\mu \tau_n \nu$ for some
$n$, and if $(\mu(x_1), \ldots , \mu(x_n))= \bar a =(a_1, \ldots ,
a_n)$ and $(\nu(x_1), \ldots , \nu(x_n))= \bar b =(b_1, \ldots ,
b_n)$, then $\bar a \tau_n \bar b$. We have an inner automorphism
$\sigma=\hat t$, transforming $\bar a$ into $\bar b$ and,
simultaneously, $\mu$ into $\nu$. So, $\mu \tau \nu$ implies $\mu
\sigma = \nu$ for some $\sigma \in Aut(H^0)$, $\tau \subset \rho_0$.
Besides, $\rho_0 \subset \tau$ and $\tau =\rho_0$. We have also
$\tau = \rho$. Finitely, $\rho =\rho_0$ and the group $H^0$ is
perfect.
\end{proof}





\begin{remark}
All $Aut H^0-$orbits in $Hom(W(X),H^0)$ are elementary sets for every finite $X$. However the number of orbits is infinite and equals  to the number of different $Ker (\alpha)$ for the points $\alpha: W(X)\to H^0$. This group is not good enough for generalization of the example of a group with a single non-trivial conjugacy class \cite{HNN}, \cite{Ku}.

On the other hand, in Proposition \ref{prop:inf_ab_gr} we constructed a group $H$ with finite number $Aut H-$orbits and each of them is an elementary set. The group $H$ is logically perfect group.

\end{remark}

Note that a group $G$ is called {\it homogeneous} if every isomorphism $\eta: A\to B$ of  its finitely generated subgroups is realized by an inner automorphism of $G$. Such groups were constructed by Ph. Hall, O.Kegel, B.Neumann ( see \cite{Ha}, \cite{Ke}, \cite{Ne} ). It is easy to see that every group of such type  is logically perfect since here we have $\tau=\rho_0$ and $\rho=\rho_0$.
 Homogeneity property is considered in model theory with respect to an arbitrary algebraic system. In this case the automorphism of a system is not assumed to be inner. The notion of  logically perfect  algebra is close to the notion of homogeneous algebra. It is easy to see that the algebra $H$ is homogeneous if and only if for every $X$ we have $\tau=\rho_0$.
 We keep the term "perfectness" having in mind the observation that perfectness  in groups is provided, usually, by inner automorphisms (like in Theorem \ref{emb}).

In general,  a logically perfect algebra is  not necessarily homogeneous: it can happen that $\rho=\rho_0$ but $\tau\neq\rho$.  However, the following theorem takes place:

\begin{theorem} (G.I.Zhitomiskii)\label{zh}
An algebra $H$ in $\Theta$ is homogeneous if and only if $H$ is strictly logically perfect.
\end{theorem}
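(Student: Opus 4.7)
The plan is to handle the two directions separately. For $(\Rightarrow)$, assuming $H$ is homogeneous, one has $\tau = \rho_0$ for every finite $X$, so each $\Aut(H)$-orbit $\mathcal{O}_\mu$ in $\Hom(W(X), H)$ coincides with the $\tau$-coset $V_{\Ker(\mu)}$. As established in the subsection on $\rho$ and $\tau$, this $\tau$-coset is the elementary set cut out by the formulas $\{w \equiv w' : (w,w') \in \Ker(\mu)\} \cup \{w \not\equiv w' : (w,w') \notin \Ker(\mu)\}$, all of which belong to $\Phi^0(X)$. Hence every orbit is a strictly elementary set and $H$ is strictly logically perfect.

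For $(\Leftarrow)$, assume $H$ is strictly logically perfect. Since $\rho_0 \subseteq \tau$ is automatic, it suffices to show $\tau \subseteq \rho_0$. Take $\mu \tau \nu$. By the equivalent characterization (every orbit is strictly elementary), $\mathcal{O}_\mu = (\LKer^0(\mu))^L_H$, and since $\LKer^0(\mu)$ is an ultrafilter in $\Phi^0(X)$, the conclusion $\nu \in \mathcal{O}_\mu$ is equivalent to the equality $\LKer^0(\mu) = \LKer^0(\nu)$. I would derive this equality by induction on the construction of $u \in \Phi^0(X)$ from atomic equalities via boolean operations and quantifiers $\exists x_i$: the atomic case follows from Theorem 1.5 ($\mu \tau \nu$ iff $AtKer(\mu) = AtKer(\nu)$), and the boolean steps are immediate from the inductive hypothesis.

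The critical case is the quantifier step $u = \exists x_i v$: given a witness $a \in H$ realizing the point $\mu'$ (obtained from $\mu$ by resetting its $x_i$-value to $a$) as a member of $\Val^X_H(v)$, one must produce $b \in H$ such that the analogous modification $\nu'$ of $\nu$ satisfies $\mu' \tau \nu'$, after which the induction hypothesis applied to $v$ yields $\nu' \in \Val^X_H(v)$, hence $\nu \in \Val^X_H(\exists x_i v)$. The main obstacle is extracting such a $b$ from strict logical perfection alone. The strategy is to use strict logical perfection at enlarged variable sets together with the $\Aut(H)$-equivariance of the $s_*$ maps in diagram $(**)$ of Subsection 1.2; the uniformity of the hypothesis across all finite variable sets is essential here, and is where the nontrivial content of the theorem lies.
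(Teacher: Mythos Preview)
Your direction $(\Rightarrow)$ is correct and coincides with the paper's argument: homogeneity gives $\tau=\rho_0$, so each orbit is the $\tau$-coset described by the set
\[
T'=\{w\equiv w' : (w,w')\in \Ker(\mu)\}\cup\{\neg(w\equiv w') : (w,w')\notin \Ker(\mu)\}\subset\Phi^0(X),
\]
hence a strictly elementary set.

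For $(\Leftarrow)$ your overall plan is right (show $\LKer^0(\mu)=\LKer^0(\nu)$ whenever $\mu\tau\nu$, then invoke strict logical perfection), but the quantifier step is not proved, and your suggested strategy points in the wrong direction. You propose to run a purely structural induction on formulas and to handle $\exists x_i v$ by finding $b\in H$ with $\mu'\tau\nu'$, appealing to ``strict logical perfection at \emph{enlarged} variable sets'' and $s_*$-equivariance. Enlarging $X$ does not help: to use strict logical perfection at a larger set you would already need the equality of $\LKer^0$ there, which is harder than what you are proving. The key idea you are missing is the opposite move.

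The paper's argument runs the outer induction on $|X|$, not on formula complexity. For $u=\exists x_n v$ one restricts $\mu,\nu$ to $X_0=X\setminus\{x_n\}$, obtaining $\mu_0,\nu_0$ which are still isomorphic. By the inductive hypothesis on $|X_0|<|X|$, \emph{every} formula in $\Phi^0(X_0)$ is correct, so $\LKer^0(\mu_0)=\LKer^0(\nu_0)$. Now strict logical perfection applied at the \emph{smaller} set $X_0$ yields $\sigma\in\Aut(H)$ with $\mu_0\sigma=\nu_0$. Given a witness $\mu'\in \Val^X_H(v)$ agreeing with $\mu$ on $X_0$, set $\nu'=\mu'\sigma$: then $\nu'$ agrees with $\nu$ on $X_0$, and $\nu'\in \Val^X_H(v)$ by $\Aut(H)$-invariance of elementary sets (no appeal to correctness of $v$ is needed here). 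Hence $\nu\in \Val^X_H(\exists x_n v)$. This produces exactly the $b=\sigma(\mu'(x_n))$ you were looking for, but the mechanism is restriction plus the hypothesis at a smaller $X$, not enlargement.
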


\begin{proof} We use the notion of a correct formula (see Proposition \ref{prop:inf_dim}). Recall this notion with respect to algebra $\Phi^0$. A formula $u \in \Phi^0$ is called correct, if for
any isomorphic $\mu$ and $\nu$ the inclusion $\mu \in Val_{H_1}(u)$, i.e.,
$u \in LKer^0(\mu)$, holds if and only if $u \in LKer^0(\nu)$, i.e., $\nu \in
Val_{H_2}(u)$.

If every formula $u\in \Phi^0(X)$ is correct then $LKer^0(\mu)=LKer^0(\nu)$, for the isomorphic $\mu,$ $\nu$.

Let now $H$ be strictly logically perfect. Using induction by cardinality of the set $X$ we will prove that every formula $u\in\Phi^0(X)$ is correct. Let $X=\{x\}$. Then every boolean formula over equalities is correct. The formula $\exists x u$ has 0 and 1 as values. So for one-element $X$ correctness takes place. Let $X=\{x_1,\ldots, x_n\}$ and let for any $X$ such that $|X|<n$ the property is proved. First of all note that the correctness property is preserved under application of the boolean operations. Let $u\in\Phi^0(X)$ be correct. Show that $\exists x_n u$ is correct. Let  the points $\mu$, $\nu:W(X)\to H$ be given, let $\mu_0$, $\nu_0$ be their restrictions on $X_0=\{x_1,\ldots, x_{n-1}\}$. Let $A$, $B$ be the images of the points $\mu$, $\nu$, respectively,  in $H$ and  $A_0$, $B_0$ be the corresponding images for $\mu_0$, $\nu_0$. Let $\alpha$ be an isomorphism $A\to B$. Denote by $\alpha_0$ the isomorphism $A_0\to B_0$ induced by $\alpha$. Take $\alpha \mu=\nu$ and $\alpha_0 \mu_0=\nu_0$. Since every formula in $\Phi^0(X_0)$ is correct we have $LKer^0(\mu_0)=LKer^0(\nu_0)$ for isomorphic $\mu_0$, $\nu_0$. Now, since $H$ is strictly logically perfect we have an automorphism $\sigma\in Aut H$ such that $\mu_0\sigma=\nu_0$. Take $\mu \sigma$ and $\nu$. Their restrictions to $X_0$ coincide.

Let now $\mu\in Val^X_H(\exists x_nu)=\exists x_n Val^X_H(u)$. Then there is $\mu'\in Val^X_H(u)$, such that $\mu$ and $\mu'$ coincide on $X_0$. Then $\mu\sigma$  and $\mu'\sigma$ coincide on $X_0$. But $\mu\sigma$ and $\nu$ coincide on $X_0$. Then
$\mu'\sigma=\nu'$ and $\nu$ coincide on $X_0$. Besides, along with $\mu'$ the point $\mu'\sigma=\nu'$ belongs to $Val^X_H(u)$.
Then $\nu\in Val^X_H(\exists x_nu)$. Hence $\mu\in Val^X_H(\exists x_nu)$ implies $\nu\in Val^X_H(\exists x_nu)$. In a similar way one can see that $\nu\in Val^X_H(\exists x_nu)$ implies $\mu\in Val^X_H(\exists x_nu)$. Thus the formula $\exists x_n u $ is also correct. Therefore all formulas from $\Phi^0(X)$ are correct. 
So, if $\mu$ and $\nu$ are isomorphic then $LKer^0(\mu)=LKer^0(\nu)$. Using once again the condition of strict logical perfectness we have $\mu\sigma=\nu$ for some $\sigma\in Aut H$ which extends $\alpha: A\to B$. This means that algebra $A$ is homogeneous.

Conversely, let $H$ be homogeneous. Then $\mu\tau\nu$ means that $\mu\sigma=\nu$ for some $\sigma\in Aut H$. Hence, a coset of $\tau$ is an orbit. However, a coset for $\tau$ is defined by a set $T\subset \Phi^0(X)$. This means that the algebra $H$ is strictly logically perfect.

\end{proof}

\begin{remark}
In fact, Theorem \ref{zh} can be deduced from the criterion $\tau=\rho_0$. However, we presented here the original edifying proof.
\end{remark}

\begin{remark}
  Note that from the proof of  Theorem \ref{emb} could be seen that every group is embedded into a homogeneous one. This fact can also be deduced from the method of  HNN-extensions.
\end{remark}

Now we give an example of logically perfect but not a homogeneous group. This example also belongs to G.I. Zhitomirskii.

\begin{proposition}\label{prop:cycl} The infinite cyclic group $\mathbb Z$ is logically perfect.
\end{proposition}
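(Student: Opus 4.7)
The plan is to exploit the smallness of $Aut(\mathbb{Z})=\{\pm\mathrm{id}\}$: for points $\mu,\nu:W(X)\to\mathbb{Z}$ with $a_i=\mu(x_i)$ and $b_i=\nu(x_i)$, the relation $\mu\rho_0\nu$ amounts to the tuple identity $(b_1,\dots,b_n)=\pm(a_1,\dots,a_n)$, because every homomorphism to the abelian group $\mathbb{Z}$ factors through the abelianization of $W(X)$ and $\mathbb{Z}$ has only two automorphisms. The content of the proposition becomes: $LKer(\mu)=LKer(\nu)$ forces $(b_i)=\pm(a_i)$.

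First I would convert the logical hypothesis into arithmetic data. For each element $w\in W(X)$, whose value under any homomorphism to $\mathbb{Z}$ is a fixed integer linear combination $L_w(a)=\sum_i m_i^{w}a_i$, and each positive integer $k$, both the equality $w\equiv e$ and the divisibility formula ``$k$ divides $w$'' belong to $\Phi(X)$. The latter is obtained from the equality $ky\equiv y'\in\Phi^0(\{y,y'\})$ by first applying $\exists y$ and then transporting along $s_\ast$ with $s:y'\mapsto w$, which is precisely the construction hinted at in Remark 1.1. Consequently $LKer(\mu)=LKer(\nu)$ yields, for every such $w$ and $k\geq 1$,
\begin{equation*}
\mu(w)=0 \iff \nu(w)=0, \qquad k\mid \mu(w) \iff k\mid \nu(w).
\end{equation*}
A nonzero integer is recovered (up to sign) as the maximum of its set of positive divisors, so these equivalences force $|\mu(w)|=|\nu(w)|$ for every $w$; equivalently, $\bigl|\sum m_i a_i\bigr|=\bigl|\sum m_i b_i\bigr|$ for all $(m_1,\dots,m_n)\in\mathbb{Z}^n$.

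It then remains to prove the elementary lemma that two linear forms on $\mathbb{Z}^n$ agreeing in absolute value on every integer tuple differ by a global sign. Taking $(m_i)=e_i$ yields $|a_i|=|b_i|$, hence $b_i=\varepsilon_i a_i$ with $\varepsilon_i\in\{\pm 1\}$ at every coordinate where $a_i\neq 0$. Taking $(m_i)=e_i+e_j$ with $a_i a_j\neq 0$ gives $|a_i+a_j|=|\varepsilon_i a_i+\varepsilon_j a_j|$; squaring this identity produces $2a_i a_j(1-\varepsilon_i\varepsilon_j)=0$, so $\varepsilon_i=\varepsilon_j$. Thus a single $\varepsilon\in\{\pm 1\}$ works at every nonzero coordinate (and is arbitrary where $a_i=0$), and the automorphism $\sigma:\mathbb{Z}\to\mathbb{Z}$, $k\mapsto\varepsilon k$, satisfies $\mu\sigma=\nu$. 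The main obstacle is purely a bookkeeping one: justifying that the divisibility predicates used above genuinely live in $\Phi(X)$, which is where the nontrivial role of the operations $s_\ast$ enters; the arithmetic lemma itself is then routine.
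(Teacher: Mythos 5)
Your argument is correct, and it reaches the paper's conclusion by a genuinely different route. Both proofs share the same skeleton: since $Aut(\mathbb{Z})=\{\pm\mathrm{id}\}$, one must show that $LKer(\mu)=LKer(\nu)$ forces $(b_i)=\pm(a_i)$, and both extract the needed arithmetic information through formulas of the shape $s_*\exists y(\dots)$, which is exactly the mechanism flagged in Remark 1.1. The difference lies in the choice of test formulas. The paper uses a \emph{single} formula tailored to the point $\mu$: it asserts $\exists y\,\bigl(x_1\equiv |a_1|y\wedge\dots\wedge x_n\equiv \mathrm{sgn}(a_na_1)|a_n|y\bigr)$, witnessed by $y=\pm 1$; transferring it to $\nu$ gives $b_i=\mathrm{sgn}(a_1)a_ic$ for some integer $c$, and the symmetric formula built from $\nu$ pins down $c=\pm1$. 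You instead use a $\mu$-independent \emph{family} of divisibility predicates $k\mid\sum m_ix_i$, recover $|\mu(w)|=|\nu(w)|$ for every linear form $w$ as the maximum of the common divisor set (the case $\mu(w)=0$ being forced since only $0$ is divisible by every $k$), and finish with the elementary lemma that two linear forms on $\mathbb{Z}^n$ agreeing in absolute value differ by a global sign. Your construction of the divisibility formula is sound: for $s:W(\{y,y'\})\to W(X)$ with $s(y')=w$, the semantics gives $\mu\in Val(s_*\exists y(ky\equiv y'))$ iff $k\mid\mu(w)$, irrespective of where $s$ sends $y$. What the paper's route buys is brevity (one formula, one symmetric swap); what yours buys is a reusable principle -- the divisor set determines an integer up to sign, so the logical type of a point over $\mathbb{Z}$ determines all values $|\mu(w)|$ -- at the cost of an extra (routine) arithmetic lemma. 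The only caveat is the one you already flag: one should fix $\Theta$ to be abelian groups or $\mathbb{Z}$-modules, as the paper does, or else note that homomorphisms to $\mathbb{Z}$ factor through abelianization so that elements of $W(X)$ still act as integer linear forms.
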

\begin{proof}

Consider the infinite cyclic group which is represented as the additive group of integers $\mathbb Z$. This group is also a $\mathbb Z$-module. Each  subgroup of $\mathbb Z$ is a submodule.  We take the variety of all abelian groups or the variety of $\mathbb Z$-modules as $\Theta$. It is clear that the group $\mathbb Z$ is not homogeneous.  Let us check that $\mathbb Z$ is logically perfect.

Take a set $X=\{x_1,\ldots,x_n\}. $ Pick a variable $y$ and let $Y=\{y,X\}$. Define the map $s:Y\to X$ by the rule $s(x_i)=x_i$, $s(y)=0$. So we have morphisms $s: W(Y)\to W(X)$ and $s_*: \Phi(Y)\to \Phi(X)$. For each formula $v\in \Phi(Y)$ there is the formula $u=s_*v$, $u\in \Phi(X)$ which does not necessarily belong to $\Phi^0(X).$

Let $\mu: W(X)\to \mathbb Z$ be a point. We have $\mu\in Val_{\mathbb Z}^X(u)=s_*Val^Y_{\mathbb Z}(v)$ if and only if $\mu s \in Val^Y_{\mathbb Z}(v)$.

Take two points $\mu$, $\nu: W(X)\to \mathbb Z$ and let $LKer(\mu)=LKer(\nu)$. Let us show that $\mu$ and $\nu$ are conjugated by an automorphism of $\mathbb Z$.
This group has only the identity automorphism and the automorphism which takes an element to the inverse one. We shall check that $\mu=\nu$ or $\mu=-\nu$.

Let $\mu(x_i)=a_i$, $\nu(x_i)=b_i$, $i=1,\ldots, n$. If all $a_i$ are equal to zero, then $\mu=\nu$. So we can assume that $a_1\neq 0$.

For the point $\mu$ we are going to construct a special test formula $v\in \Phi(Y)$ such that $\mu\in Val^X_\mathbb Z(u)$, $u=s_*v$. This will imply $\nu\in Val^X_\mathbb Z (u)$.  First, we shall define $v_0\in \Phi(Y)$. Then $v$ is constructed as $v=\exists y
v_0$.

Let $v_0$ be the formula
$$
(x_1\equiv|a_1|y)\wedge(x_2\equiv sgn(a_2a_1)|a_2|y)\wedge\ldots\wedge (x_n\equiv sgn(a_na_1)|a_n|y).
$$

Here, $|a|$ stands for the absolute value of $a$ and $\it sgn(a)$ is a sign of $a$.

Define a point $\gamma: W(Y)\to \mathbb Z$ by $\gamma(x_i)=\mu(x_i)=a_i$ and $\gamma(y)=sgn(a_1)1$. The point $\gamma$ satisfies $v_0$.  Since $\mu s$ and $\gamma$ coincide on $X$ then the point $\mu s$ satisfies the formula $\exists y v_0\equiv v$. In other words  $\mu\in Val^X_\mathbb Z(u)$, where $u=s_*v$. We have also $\nu s\in Val^Y_\mathbb Z(v)$. From this follows that for some values $c$ of $y$ and  $b_i$ of $x_i$ we have
$$
b_1=|a_1|c,\ldots, b_n=sgn(a_na_1) |a_n|c.
$$
This gives that $b_i=a_ic$ if $a_1>0$,  $b_i=-a_ic$ if $a_1<0$. Interchanging $\mu$ and $\nu$ we have $d$ such that $a_i=b_id$ if $b_1>0$ and $a_i=-b_id$  if $b_1<0$. In such a way we arrive to $c=1$ or $c=-1$. In the first case we have $\mu=\nu$, and in the second one $\mu=-\nu$.
 The proof of the example is finished.
\end{proof}

The group $\mathbb Z$ is not strictly logically perfect since $\mathbb Z$ is not homogeneous. It wood be interesting to get other examples which work in different $\Theta$.

Let us make one more remark. Let us take $\mu:W(X)\to H$ and consider its closure $A=\{\mu\}^{LL}_H=(LKer(\mu))^L_H$. This $A$ is a minimal elementary set and every elementary set in $Hom(W(X),H)$ is the union of such disjoint $A$. If $H$ is logically perfect then these minimal elementary sets coincide with the orbits of $Aut(H)$. This is a property of the lattice of elementary sets in $Hom(W(X),H)$. It is clear that the minimal elementary sets one to one correspond to types.

Now we give two simple examples of  strictly logically perfect abelian groups.

\subsection{Example}
Let us study a concrete example. Let a group $H$ be a discrete direct product
 of all simple cyclic groups of different prime orders.
We show that for a given $H$ all $Aut(H)$-orbits in $Hom(W,H)$ are
elementary sets.

Proceed from the variable $x \in W=W(X)$  and write down a formula
$u=u(x)$ of the form $x\not =1 \wedge x^m=1 \wedge u_0$, where $u_0$
is the conjunction of  all $x^{m_i}\not =1$ by all divisors $m_i$ of
the number  $m$.

Only elements $g$ of the order $m$ satisfy the formula $u=u(x)$.
There is a finite number of such elements, namely $(p_1-1) \ldots
(p_k -1)$, if $m=p_1 \ldots p_k$.

Take a finite $X$ and consider $Hom(W(X),H)$. Let $(\mu(x_1), \ldots
, \mu(x_n))= \bar g = (g^1, \ldots , g^n)$ for a point $\mu : W(X)
\to H$. Represent each $g^i$ as $g^i = g^i_1 \ldots g^i_{k_i}$. The
factors are organized by increasing of their prime orders. The order of
$g^i$ is some $m^i$.

Proceed further from the set of formulas $u(x_1), \ldots  ,u(x_n)$
for the orders $m_1, \ldots , m_n$ and let $u$ be their conjunction.
Pass to the elementary set $Val^X_H(u)=A$. All the points $\mu :
W(X) \to H$ of the type described above are included in this set.

\begin{proposition}\label{prop:orbit}
Every $A= Val^X_H(u)$ is an orbit of the group $Aut(H)$ and every
orbit has such form.
\end{proposition}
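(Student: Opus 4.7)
The plan is to first describe $\Aut(H)$ explicitly. For each prime $p$ the summand $\Z/p\Z$ is the $p$-torsion subgroup of $H$, hence a characteristic subgroup; consequently every $\sigma\in\Aut(H)$ preserves the prime decomposition and acts on $\Z/p\Z$ by multiplication by some unit $c_p\in(\Z/p\Z)^{\times}$. This gives $\Aut(H)\cong\prod_p(\Z/p\Z)^{\times}$, acting diagonally on the prime-component decomposition of $H$.

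Given a point $\mu\colon W(X)\to H$ with $\mu(x_i)=g^i$, write $g^i=\sum_p g^i_{(p)}$ for its prime decomposition (viewing $H$ additively). The support $\{p:g^i_{(p)}\neq 0\}$ is exactly the set of prime divisors of the order $m_i=\ord(g^i)$, so the formula $u(x_i)$ of the excerpt, which pins $\ord(x_i)=m_i$, simultaneously pins the prime support of $x_i$. Hence every $\nu\in A=\Val^X_H(u)$ has prime supports that match those of $\mu$ index by index.

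Next I would verify that any two $\mu,\nu\in A$ are $\Aut(H)$-conjugate, writing $\nu(x_i)=h^i=\sum_p h^i_{(p)}$. For each prime $p$ in the common support, pick an index $i_p$ with $g^{i_p}_{(p)}\neq 0$ and put $c_p=h^{i_p}_{(p)}\cdot (g^{i_p}_{(p)})^{-1}\in(\Z/p\Z)^{\times}$. Then $\sigma=(c_p)_p$ carries $\mu$ to $\nu$ provided $c_p\,g^j_{(p)}=h^j_{(p)}$ for every $j$ with $g^j_{(p)}\neq 0$, i.e.\ provided the ratios $g^{i_p}_{(p)}:g^j_{(p)}$ and $h^{i_p}_{(p)}:h^j_{(p)}$ agree. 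This is automatic when the prime supports of the $g^i$ are pairwise disjoint. In the general case the ratio constraints must be built into $u$ by conjoining equalities of the form $x_i^{m_i/p}\equiv k\cdot x_j^{m_j/p}$, using that the word $w^{m/p}$ isolates the $p$-component of $w$ up to the fixed unit $(m/p)\bmod p$, with $k$ read off from $\mu$. Conversely, any $\Aut(H)$-orbit is realised in this way by picking any representative and writing down the corresponding formula.

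The main obstacle is precisely the consistency of the $c_p$'s across all indices $j$ with $g^j_{(p)}\neq 0$: the raw conjunction $u(x_1)\wedge\cdots\wedge u(x_n)$ fixes only orders, which is not enough to cut out a single orbit whenever prime supports overlap. The remedy is to enrich $u$ with the ratio conjuncts described above and verify that the enriched formula is still elementary (a finite conjunction of word equalities) and that its solution set equals the orbit of $\mu$; once that is in place, the rest of the proposition is immediate, since $\Val^X_H(u)$ is automatically $\Aut(H)$-invariant and the construction exhibits a formula $u$ for each orbit.
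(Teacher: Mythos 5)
Your approach is in essence the same as the paper's: describe $\Aut(H)\cong\prod_p(\Z/p\Z)^{\times}$ acting diagonally on the prime components and match two points of $A$ component by component. But you have put your finger on a point that the paper's own proof passes over. The paper constructs, for each variable $x$ separately, the local transitions $g_i\to g'_i$ on the cyclic summands in the support of $\mu(x)$ and then asserts that ``we have an automorphism $\sigma$'' of $H$; it never checks that the multipliers $c_p$ produced by different variables agree. As you observe, they need not: take $X=\{x_1,x_2\}$ and $\mu(x_1)=\mu(x_2)=g$ with $g$ a generator of the summand of order $5$. The order formula alone cuts out the set of all pairs of elements of order $5$ (sixteen points), whereas the $\Aut(H)$-orbit of $\mu$ is the diagonal (four points). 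So the proposition as literally stated fails for $n\ge 2$ whenever the prime supports of the $\mu(x_i)$ overlap, and the consistency of the $c_p$'s across indices is exactly where the paper's argument has a gap.

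Your remedy is correct and recovers what the rest of the subsection actually uses. Since the orders are squarefree, $x_i^{m_i/p}$ does isolate the $p$-component of $x_i$ up to the fixed unit $(m_i/p)\bmod p$, so the ratio conjuncts $x_i^{m_i/p}\equiv \bigl(x_j^{m_j/p}\bigr)^{k}$ (written multiplicatively), with $k$ read off from $\mu$, force a single multiplier $c_p$ for each prime $p$; together with the order constraints this yields a finite conjunction of equalities and negated equalities, i.e.\ a quantifier-free formula in $\Phi^0(X)$, whose value set is exactly the orbit of $\mu$, and conversely every orbit arises from such a formula built from any of its representatives. The corrected conclusion --- every $\Aut(H)$-orbit in $Hom(W(X),H)$ is a finitely defined (indeed strictly) elementary set --- is precisely what is needed afterwards to conclude that $H$ is logically perfect, so your enriched formula should replace the order-only $u$ in the statement.
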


\begin{proof}
Let $\mu: W(X) \to H$ be a point in A. For every automorphism
$\sigma \in Aut(H)$ we have the inclusion $\mu \sigma \in A$. We
should check whether $\nu = \mu \sigma$ for any other point $\nu:
W(X) \to H$ from $A$ and some automorphism $\sigma$.

By the definition, we have $\mu \sigma(x) =\mu(x) \circ \sigma =g
\circ \sigma$ for every $x \in X$. Let $g=g_1 \ldots g_k$, where the
order of $g_i$ is $p_i$. Let, besides, $\nu(x) = g'=g'_1 \ldots
g'_k$, where the order of $g'_i$ is also $p_i$ and $g_i$ and $g'_i$
are generators of the cyclic group of the order $p_i$. Therefore the
transition $g_i \to g'_i$ determines automorphisms of these cyclic
subgroups. This holds for each $x \in X$ and, hence, we have an
automorphism $\sigma$, taking $g$ into $g'$. We also have
$\mu \sigma = \nu$.

Prove now that every orbit is some set $A$. Take an arbitrary
$X=\{x_1, \ldots ,x_n\}$ and a point $\mu : W(X) \to H$. A sequence
$\bar g =(g^1, \ldots ,g^n)$ corresponds to this point. A specific
$A$ containing $\mu$ corresponds to $\mu$, and this is valid for
each $A$. Applying to $\bar g$ arbitrary automorphisms $\sigma \in
Aut(H)$ we get the whole orbit $A$.
\end{proof}

There is an infinite number of different orbits, and thus an infinite number of
elementary sets and types. The interesting point here is that every
orbit is finitely defined. It is easy to understand that the group
$H$ satisfies none of the noetherianity conditions and, besides, if
some $H'$ is isotyped with $H$, then $H$ and $H'$ are isomorphic.
Actually this result is derived  from elementary equivalence of  $H$
and $H'$.

Let us prove this fact and then make some remarks on noetherianity.

Let $H$ be a group from the \ref{prop:orbit} and $H'$ a
 group isotyped to $H$. Then they are elementary equivalent and they have the same
identities. Thus, $H'$ is abelian. The formula $\exists x (x^p
\equiv 1)$ in $H$ means that also in $H'$ there exist elements of
the order $p$. The formula $x_1^p \equiv x_2^p \equiv 1 \to
x_1\equiv x_2^{m_1} \vee \ldots \vee x_1\equiv x_2^{m_k}$ with $m_i
< p$ implies that $H'$ contains only one cyclic subgroup of the
order $p$. The formula $x^{p^n} \equiv 1 \to x^p \equiv1$ means that
all $p$-elements are of the order $p$.

All this holds in $H'$. Therefore, $H'$ is a direct product of all
cyclic subgroups of all prime orders, that leads to isomorphism.

Let us show that $H$ is not logically  noetherian. Take an infinite subset $M$
in the set of primes with the infinite complement $M'$. Consider a
formula $\neg (x^p \equiv 1 \to x \equiv 1)$ by all $p \in M$. This
set is not reduced to a finite one since $M'$ is infinite.


\subsection{Additive group of rational numbers}

Proceed now from the additive group  $H$   of rational numbers.

First of all we show that this group is homogeneous. Fix a set $X=\{ x_1, \ldots , x_n\}$. Let $W(X)$
be the free abelian group over $X$. Take the points $\mu, \nu:W(X)\to H$. Let $A$ and $B$ be the images of $\mu$ and $\nu$, respectively, and $\alpha: A\to B$ be an isomorphism. The groups $A$ and $B$ are cyclic and thus $A=\{a\}$, $B=\{b\}$. We assume that $\alpha(a)=b$. Let the sequence $a_1 , \ldots , a_n$, where $a_i=m_ia \in A$ corresponds to $\mu$. Correspondingly, we have $(b_1,\ldots ,b_n)$
for $\nu$. Here, $\alpha(a_i)=b_i=m_i b$, where all $m_i$ are integers. Take $s=b/a$, $sa=b$. Multiplication of the element $h\in H$ by $s$ is an automorphism of the group $H$. This automorphism extends the initial $\alpha$.

If, further, $A$ is the orbit of the point $\mu$ then this orbit is defined by a set of formulas T determined by the kernel of the point $\mu$.

The same orbit one can describe using another approach. Let $x$ be an auxiliary variable and consider the formula
$$u=u(m_1,\ldots, m_n)=\exists x(x^{m_1}=x_1\wedge \ldots \wedge x^{m_n}=x_n).$$
 \noindent
 If we restrict the value of this formula on the initial set $X$ then we come to the orbit we are looking for.

 In the next definition and Theorem \ref{th:locsyc}  a possible use of the auxiliary variable $x$ is taken into account.

\begin{defn}
An algebra $H$ is called logically locally noetherian if for each
finite set $X$ the closure of a finite set of points $\mu_i: W(X)
\to H$ is finitely defined.
\end{defn}

\begin{theorem}\label{th:locsyc}
The locally cyclic group $H$ is logically locally noetherian.
\end{theorem}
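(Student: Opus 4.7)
The plan is to reduce finite definability of the closure of a finite set of points to finite definability of a single orbit, and then to take a disjunction of the corresponding defining formulas. Since $H=\mathbb{Q}$ is locally cyclic, the image of every point $\mu:W(X)\to H$ is a cyclic subgroup, and this is what makes an explicit description of orbits possible.

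First I would record that $\{\mu\}^{LL}_{H}$ equals the $Aut(H)$-orbit of $\mu$. Indeed, the homogeneity of $\mathbb{Q}$, verified in the discussion preceding the theorem, yields $\tau=\rho_0$; combined with the always-valid chain $\rho_0\subseteq\rho\subseteq\tau$ this gives $\rho=\rho_0$. Since $LKer(\mu)$ is an ultrafilter, $\{\mu\}^{LL}_{H}$ is exactly the $\rho$-class of $\mu$, hence the $Aut(H)$-orbit of $\mu$.

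Next I would give a single defining formula $v_\mu$ for this orbit. Pick a generator $a$ of $\langle\mu(x_1),\ldots,\mu(x_n)\rangle$ and write $\mu(x_j)=m_j a$ with $m_j\in\mathbb{Z}$. The formula
\[
u=\exists x\bigl((m_1 x\equiv x_1)\wedge\cdots\wedge(m_n x\equiv x_n)\bigr),
\]
discussed just before the theorem, has value set $\{(m_1 c,\ldots,m_n c):c\in\mathbb{Q}\}$ in $Hom(W(X),H)$. When some $m_j\neq 0$ this overshoots the orbit by the single zero tuple (which lies in a distinct orbit), so I would set
\[
v_\mu=u\wedge\bigl(\bigvee_j x_j\not\equiv 0\bigr),
\]
and in the remaining case set $v_\mu=\bigwedge_j(x_j\equiv 0)$. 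The passage of $u$ from $\Phi(X\cup\{x\})$ to an element of $\Phi(X)$ is handled by the operation $s_\ast$ exactly as in the proof of Proposition \ref{prop:cycl}.

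Finally, for points $\mu_1,\ldots,\mu_k$ the elementary set defined by the single formula $v_{\mu_1}\vee\cdots\vee v_{\mu_k}$ contains every $\mu_i$ and hence contains the closure $\{\mu_1,\ldots,\mu_k\}^{LL}_{H}$; conversely each $\{\mu_i\}^{LL}_{H}$ is contained in this closure by monotonicity, so the two sets coincide. The only genuine technical point is refining the natural formula $u$ so as to exclude the zero tuple and isolate a single orbit; the remainder is routine manipulation of Galois closures and of the operation $s_\ast$.
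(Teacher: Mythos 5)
Your proof is correct and follows essentially the same route as the paper's: identify the closure of a single point with its $Aut(H)$-orbit, define that orbit by the formula $\exists x\bigl(\bigwedge_j(m_jx\equiv x_j)\bigr)$ transported into $\Phi(X)$ via $s_*$, and pass to a finite disjunction for a finite set of points. Your refinement $v_\mu=u\wedge\bigl(\bigvee_j x_j\not\equiv 0\bigr)$ is a welcome correction of a small imprecision the paper leaves implicit, namely that the value set of $u$ also contains the zero tuple, which lies in a different orbit.
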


\begin{proof}

First of all we show that for every point $\mu: W(X)\to H$ we have $LKer(\mu)= \{u\}^{LL}_H$ for some formula $u$ of the form $u=u(m_1, \ldots ,
m_n)$. Besides, $\{\mu\}^{LL}_H-$ is a finitely definable closure of the point $\mu$.

Let,  $u=u(m_1, \ldots ,
m_n)$. Then $Val_H^{X}(u) =\{u\}^L_H$ and $(Val_H^{X}(u))^L_H
=\{u\}^{LL}_H$. Let now $\mu \in A_0=Val_H^{X}(u)$. Then
$\{\mu\}^L_H \supset {A_0}^L_H =\{u\}^{LL}_H$ and $LKer(\mu) \supset
\{u\}^{LL}_H$. By the definition of the operator $L$ we have
${A_0}^L_H = \bigcap_{\nu \in A_0} LKer(\nu)$. It follows from the
previous considerations that $\mu$ and $\nu$ are isotyped. Hence,
${A_0}^L_H =  LKer(\mu)= \{u\}^{LL}_H$. We have also $
(LKer(\mu))^L_H=A_0$ for arbitrary $\mu \in A_0$.

We will need now a remark on the lattice of all elementary sets in
the given $Hom(W(X),H)$. Let $A$ and $B$ be two such sets,
$A^L_H=T_1$, $B^L_H=T_2$. Let $T^0_1$ be a subset in $T_1$ with
$(T^0_1)^{LL}_H = T_1$. Similarly, take $T^0_2$ in $T_2$ with
$(T^0_2)^{LL}_H = T_2$. Denote by $T^0_1 \vee T^2_0$  a set of all
$u \vee v$, $u \in T^0_1$, $v \in T^0_2$. Check that $(T^0_1 \vee
T^2_0)^L_H = A \cup B$. Indeed, $(T^0_1 \vee T^2_0)^L_H = \bigcap_{u
\vee v} Val^X_H(u \vee v) = \bigcap_{u \vee v}( Val^X_H(u) \cup
Val^X_H(v))= (\bigcap_{u \in T^0_1}Val^X_H(u)) \cup (\bigcap_{v \in
T^0_2}Val^X_H(v)) =  A\cup B$.

Apply this to the group $H$. Let
$u_1= u(m_1, \ldots , m_n)$, $u_2= u({m'}_1, \ldots , {m'}_n)$,
$A=Val^{X}_H(u_1)$, $B=Val^{X}_H(u_2)$. Then $A \cup B =
Val^{X}_H(u_1 \vee u_2)$.  This implies that
the union of any finite number of the sets of the type $A$ is an
elementary finitely defined set.

Let, further, $A_0$ be a finite set of points $\mu: W(X) \to H$.
Every $\mu \in A_0$ belongs to some $Val^{X}_H(u)$, $u=u(m_1, \ldots
, m_n)$. Hence, $A_0$ is contained in a finite union of the sets of
the type $Val^{X}_H(u)$.

Let $A_0$ be a subset in some
$A=(LKer(\mu))^L_H$. Then ${A^L_0}_H= \bigcap_{\nu \in A_0}
LKer(\nu) =LKer(\mu)$ for a point $\mu\in A_0$. The closure is
${A^{LL}_0}_H =A$.

Take now a finite set of points $A_0=\{\mu_1, \ldots , \mu_k\}$,
$\mu_i \in LKer(\mu_i)^L_H$ and show that its closure is finitely
defined. Here we have ${A_0}^L_H = \bigcap_{\mu_i \in A_0}
LKer(\mu_i)$.

It follows from the remarks on elementary sets that ${A^{LL}_0}_H =
\bigcup_{\mu_i} LKer(\mu_i)^L_H = \bigcup_{\mu_i} A_i$, $A_i=
LKer(\mu_i)^L_H$. For the group of rational numbers $H$ all $A_i$
are finitely defined. Then $\bigcup_{\mu_i} A_i$ is finitely defined
as well. Therefore, the closure ${A^{LL}_0}_H$ is finitely defined.

\end{proof}

Now we might compare local logical noetherianity to other noetherianity
conditions. We do not  study this problem here.  Just note that every logically noetherian algebra $H$ is locally logically noetherian.
However, if
every finitely generated subgroup in $H$ is logically noetherian, it
does not mean that $H$ is logically locally noetherian.




\section{Some facts from algebraic logic. Appendix} \label{sec:appendix}

\subsection{Introduction}

For every given variety of algebras $\Theta$ we distinguish an
ordinary pure logic in $\Theta$ and algebraic logic in $\Theta$.
Formulas of algebraic logic are formulas of the pure logic,
compressed by the semantic relation in the given $\Theta$. Polyadic
Halmos algebras and cylindric  Tarski algebras are the main general
structures of the algebraic logic. The essential characteristic
property of these structures is that they admit an infinite set of
variables. Denote this set by $X^0$. (See \cite{H}, \cite{HMT}).

In our case we are forced to
consider a system of all finite subsets $X$ in $X^0$ instead of such
big $X^0$. Denote this system by $\Gamma$. Then we pass to a
multi-sorted algebra with a system of sorts $\Gamma$. We come, in
particular, to multi-sorted Halmos algebras and to Halmos
categories.

\subsection{A category - algebra $Hal_\Theta(H)$}

Let us start with an important example, namely, Halmos category
$Hal_\Theta(H)$. Here $H$ is an algebra in $\Theta$. Objects of this
category are extended boolean algebras $Bool(W(X),H)$. Define
morphisms $s_* : Bool(W(X),H)\to Bool(W(Y),H)$. Denote by $\Theta^0$
the category of free in $\Theta$ algebras $W=W(X)$, $X \in \Gamma^0$.
We have also a category $K^0_\Theta(H)$ of all affine spaces over
$H$. Its morphisms are mappings $\tilde s: Hom(W(Y),H) \to
Hom(W(X),H)$, where $s:W(X) \to W(Y)$ is a morphism in $\Theta^0$
and $\tilde s(\nu) = \nu s: W(Y) \to H$ for $\nu: W(X) \to H$. For
each $A \subset Hom(W(X),H)$ we set $s_* A = \tilde s^{-1} A$. This
determines a morphism in $Hal_\Theta(H)$. Every $s_*$ is also a
homomorphism of Boolean algebras and it is correlated with
quantifiers and equalities (see also Subsection  \ref{subsec:HA}).

Passing to general definitions, let us refine the notion of extended
Boolean algebras.

Recall that in Algebraic Logic (AL) quantifiers are treated as
operations on Boolean algebras. Let $B$ be a Boolean algebra. Its
{\it existential quantifier} is a mapping $\exists : B \to B$ with the
conditions:

1. $\exists 0 = 0$,

2. $\exists a > a$,

3. $\exists(a \wedge \exists b) = \exists a \wedge \exists b $.

\noindent The {\it universal quantifier} $ \forall : B \to B$ is defined
dually:

1. $\forall 1 = 1$,

2. $\forall a < a$,

3. $\forall(a \vee \forall b) = \forall a \vee \forall b $.

\noindent Here $0$ and $1$ are zero and unit of the algebra $B$ and
$a,b$ are arbitrary elements of $B$. The quantifiers $\exists$ and
$\forall$ are coordinated in the usual way: $\overline{\exists
a}=\forall{\overline a}$, $\overline {\forall a}=\exists {\overline
a}$ .

Let $\Theta$ and $W=W(X) \in \Theta$ be fixed and $B$ be a Boolean
algebra. We call $B$ an {\it extended Boolean algebra} in $\Theta$ over
$W(X)$, if

1. There are defined quantifiers $\exists x$ for all $x \in X$ in
$B$ with $\exists x \exists y = \exists y \exists x$ for all $x,y
\in X$.

2. To every formula  $w \equiv w'$, $w,w' \in W$ it corresponds a
constant in  $B$, denoted also by $w \equiv w'$. Here,

2.1. $w \equiv w$ is the unit of the algebra $B$.

2.2. For every $n$-ary operation $\omega\in \Omega$, where $\Omega$
is a signature of the variety $\Theta$, we have
$$
w_1\equiv w'_1 \wedge \ldots \wedge w_n \equiv w'_n < w_1 \ldots w_n
\omega \equiv w'_1 \ldots w'_n \omega.
$$

\noindent We can consider the variety of such algebras for the given
$\Theta$ and $W=W(X)$.

\subsection{Halmos category. A general definition}
\begin{defn} A category $\Upsilon$ is a {\it Halmos category} if: 

1. Every its object has the form $\Upsilon(X)$,  and this object is
an extended Boolean algebra in $\Theta$ over $W(X)$.

2. Morphisms are of the form $s_* : \Upsilon(X) \to \Upsilon(Y)$,
where $s: W(X) \to W(Y)$ are morphisms in $\Theta^0$, $s_*$ are
homomorphisms of Boolean algebras and the transition $s \to s_*$ is
given by a covariant functor $\Theta^0 \to \Upsilon$.

3. There are identities controlling the interaction of morphisms
with quantifiers and equalities. The coordination with the
quantifiers is as follows:

\noindent  $\qquad $ 3.1.
 $s_{1*} \exists x a = s_ {2*} \exists x
a, \quad a \in \Upsilon(X)$, if $s_1 y = s_2y$ for every $y
 \in X, \; y \neq
x$.

\noindent $\qquad $ 3.2. $s_{*}\exists x a = \exists (sx) (s_*a) $
if $ sx = y \in Y$ and $y = sx$ is not in the support of $sx'$, $x'
\in X, \; x' \neq x$.

4.  The following conditions describe coordination with equalities:

\noindent
 $\qquad $ 4.1. $s_* (w\equiv w') = (sw \equiv sw')$ for
$s\colon W(X) \to W(Y)$, $w, w' \in W(X)$.

\noindent $\qquad $ 4.2. $s^x_w a \wedge (w \equiv w') < s^x_{w'} a$
for an arbitrary $a \in \Upsilon(X), x \in X,$ where $w,w' \in
 W(X)$, and $s^x_w\colon W(X) \to W(X)$ is defined by the rule:
$ s^x_w (x) = w, s^x_w(y) = y, y \in X,\; \;  y \neq x$.

\noindent
\end{defn}
The category $Hal_\Theta(H)$ is an example of the Halmos category.
Another important example is the category of formulas
$Hal^0_\Theta$ of the algebras of formulas $Hal^0_\Theta(X) =
\Phi(X)$. {\it This category plays in logical
geometry  the same role as the category $\Theta^0$ plays in AG.}
\subsection{Halmos algebras}\label{subsec:HA}
We deal with multi-sorted Halmos algebras, associated with Halmos
categories. Describe first the signature  $L_X$  . Take $L_X = \{
\vee, \wedge, ^-, \exists x, x \in X, M_X \}$ for every $X$. Here
$M_X$ is a set of all equalities over the algebra $W=W(X)$. We  add
all $s: W(X) \to W(Y)$ to all $L_X$, treating them as symbols of unary
operations. Denote the new signature by $L_\Theta$.

Consider further algebras $\Upsilon = (\Upsilon_X, X \in \Gamma^0)$.
Every $\Upsilon_X$ is an algebra in the
 signature $L_X$ and an unary operation (mapping) $s_*: \Upsilon_X \to \Upsilon_Y$  corresponds to every
$s: W(X) \to W(Y)$.

\begin{defn}
We call an algebra $\Upsilon$ in the signature
$L_\Theta$ {\it a Halmos algebra,} if

1. Every $\Upsilon_X$ is an extended Boolean algebra in the
signature $L_X$.

2. Every mapping $s_*: \Upsilon_X \to \Upsilon_Y$  is a homomorphism of Boolean
algebras.

3. The identities, controlling interaction of operations $s_*$ with
quantifiers and equalities are the same as in the definition of
Halmos categories.


\noindent

\end{defn}
 It is clear now that each Halmos category $\Upsilon$ can
be viewed as a Halmos algebra and vice versa.
 In particular, this relates to $Hal_\Theta(H)$. Recall also that homomorphisms of multi-sorted algebras work componentwise.

\subsection{Categories and algebras of formulas}
Denote by $M=(M_X, X \in \Gamma)$ a multi-sorted set with the
components $M_X$.

Take the {\it absolutely free algebra} $\Upsilon^0 = (\Upsilon^0_X, X \in
\Gamma)$ over $M$ in the signature $L_\Theta$. Elements of each
$\Upsilon^0_X$ are First Order Logic (FOL) formulas  which are
inductively constructed  from the equalities using the signature
$L_\Theta$. So, $\Upsilon^0$ is a multi-sorted algebra of pure FOL
formulas over equalities.

Denote by $Hal_\Theta$ the variety of $\Gamma$-sorted Halmos algebras
in the signature $L_\Theta$. 
Denote by $Hal^0_\Theta$ the free algebra of this variety over the
multi-sorted set of equalities $M=(M_X, X\in \Gamma)$.

The same $M$ determines the
homomorphism $\pi = (\pi_X, X \in \Gamma) : \Upsilon^0 \to
Hal^0_\Theta$. If $u \in \Upsilon^0_X$, then the image $u^{\pi_X} =
\bar u$ in $Hal^0_\Theta (X)$ is viewed as a compressed formula.

{\it Setting $Hal^0_\Theta(X) = \Phi(X)=(\Upsilon^0_X)^{\pi_X}$ we get the wanted algebra of
compressed formulas which we used throughout the paper}. This  is an extended Boolean algebra with additional operations of type $s_*$.

Recall that the Halmos algebra of formulas $Hal^0_\Theta$ is also a
Halmos category.
 We have a covariant functor
$\Theta^0 \to Hal^0_\Theta$.

\subsection{Value of a formula}
The value $Val^X_H(w\equiv w')$ corresponds to each equality
$w\equiv w'$, $w, w' \in  W(X)$.
 This determines a mapping
$Val_H: M \to Hal_\Theta (H)$ which is uniquely extended up to
homomorphisms $$Val^0_H : \Upsilon^0 \to Hal_\Theta (H),$$ and
$$Val_H: Hal^0_\Theta \to Hal_\Theta (H).$$
\noindent
 For every $X \in
\Gamma$ we have a commutative diagram


$$
\CD
\Upsilon^0_X @[2]> Val^{0X}_H  >> Bool(W(X),H)\\
 @[2]/SE/\pi_X //@.@.\;    @/NE//Val^X_H/\\
 @. \Phi(X) \\
\endCD
$$

\noindent Thus, for every $u \in \Upsilon^0_X$ and the corresponding
$\bar u \in \Phi(X)$ we have the values $Val^{0X}_H (u) =
Val^{X}_H (\bar u)$.

Let us make a remark on the kernel of the homomorphism $Val_H$. We
have
$$\Ker(Val_H) = Th(H) = (Th^X (H), X \in \Gamma).$$
Here $Th(H)= (Th^X (H), X \in \Gamma),$ is the {\it elementary theory} of
the algebra $H$, i.e., 
the set of formulas $u \in Th^X (H)$ such that $Val^X_H(u)=
\Hom(W(X),H)$. It is clear also that the image $ImVal_H$ is a
subalgebra in $Hal_\Theta(H)$  which consists of one-defined
elementary sets.

\subsection{The main theorem}
\begin{theorem}\label{th:main}
\cite{P7} The variety $Hal_\Theta$ is generated by all algebras $Hal_\Theta(H)$, $H \in\Theta$.
\end{theorem}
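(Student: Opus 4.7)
The plan is to invoke Birkhoff's HSP theorem: to show that the class $\{Hal_\Theta(H) : H \in \Theta\}$ generates the variety $Hal_\Theta$, it suffices to verify that every free algebra in $Hal_\Theta$ embeds into a direct product of algebras of the form $Hal_\Theta(H)$. I would begin with the case of the free Halmos algebra $Hal_\Theta^0$ over the multi-sorted set $M$ of equalities, exploiting the evaluation homomorphisms $Val_H \colon Hal_\Theta^0 \to Hal_\Theta(H)$ from Subsection 5.6, and then extend the argument to the free algebra over an arbitrary multi-sorted generating set.

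The core claim is that the product map $\prod_{H \in \Theta} Val_H \colon Hal_\Theta^0 \to \prod_{H \in \Theta} Hal_\Theta(H)$ is injective. Since each $\Phi(X)$ is an extended Boolean algebra, injectivity is equivalent to the statement that if $Val_H^X(\bar u) = Hom(W(X),H)$ for every $H \in \Theta$, then $\bar u = 1$ in $\Phi(X)$; equivalently, $\bar u, \bar v \in \Phi(X)$ with matching values across all $H$ must coincide. In congruence language, $\bigcap_H Ker(Val_H)$ is the trivial congruence on $Hal_\Theta^0$, which is the completeness-style statement that every formula universally valid across $\Theta$ is already identified with $1$ in the free algebra.

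I would establish this by a Henkin-Lindenbaum argument adapted to the multi-sorted Halmos framework. Starting from $\bar u \neq 1$ in $\Phi(X)$, the Boolean filter generated by $\neg \bar u$ extends to an ultrafilter $T$ of $\Phi(X)$. The ultrafilter $T$ is then realized as a logical kernel $LKer(\mu)$ of a point $\mu \colon W(X) \to H$ in a suitable $H \in \Theta$ built by a Henkin-style construction: enlarge $X$ with fresh witness variables for each formula $\exists x\, v \in T$, iterate to saturate over all sorts, and take $H$ as the quotient of a free $\Theta$-algebra by the equalities that the atomic part of $T$ forces. By construction $\mu \notin Val_H^X(\bar u)$, contradicting the assumed universal validity.

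Passage from $Hal_\Theta^0$ to a free algebra $F(Y)$ over an arbitrary multi-sorted $Y$ is then routine: every multi-sorted assignment $\phi \colon Y \to Hal_\Theta(H)$ extends by freeness to a homomorphism $F(Y) \to Hal_\Theta(H)$, and separation of distinct elements of $F(Y)$ reduces to the injectivity already established for $Hal_\Theta^0$ after the extra generators are viewed as auxiliary constants. The main obstacle is the Henkin step in its multi-sorted form: one must coordinate the realization of $T$ with the operations $s_*$ and with the quantifier/equality identities of Subsection 5.3 across all sorts $X \in \Gamma$ simultaneously, and this coordination is the genuine technical heart of the argument.
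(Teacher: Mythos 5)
First, a point of reference: the paper does not actually prove Theorem \ref{th:main} --- it is quoted from \cite{P7}, and the only surrounding material is the reformulation $\widetilde{\pi}=\bigcap_H Ker(\pi_H)$ together with the remark that this congruence is the Lindenbaum--Tarski congruence. Your Birkhoff-plus-completeness strategy is therefore consistent with the intended route, but as written it contains a genuine gap at exactly the point you flag as ``the technical heart.'' You start from $\bar u\neq 1$ in $\Phi(X)$, where $\Phi(X)$ is \emph{by definition} the free algebra of the variety $Hal_\Theta$, extend the filter generated by $\neg\bar u$ to an ultrafilter $T$, and propose to realize $T$ by a Henkin model in some $H\in\Theta$. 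To run a Henkin construction you must know that $T$, together with its witness extensions across all the sorts, is consistent in a sense that guarantees the quotient of $W(X\cup\{\text{witnesses}\})$ by the atomic part of $T$ is a model of every formula in $T$; the only hypothesis available is that $\neg\bar u\neq 0$ modulo the defining identities of $Hal_\Theta$. Bridging ``nonzero in the free Halmos algebra'' and ``satisfiable in some $H\in\Theta$'' is precisely the completeness statement being proved, so invoking an unspecified Henkin construction here risks assuming the conclusion. What is actually required is the locally finite polyadic-algebra machinery: conservativity of adjoining a fresh variable (that applying $\exists$ after the embedding $s_*:\Phi(X)\to\Phi(X\cup\{z\})$ recovers the original element in the \emph{free} algebra), and compatibility of the witness choices with the identities 3.1--3.2 and 4.1--4.2 of Subsection 5.3. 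None of this is supplied.

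The second gap is the passage from $Hal_\Theta^0$ to free algebras over arbitrary generating sets. Since the equalities in $M_X$ are constants of the signature $L_X$, the algebra $Hal_\Theta^0$ is essentially the closed-term algebra, and injectivity of $\prod_H Val_H$ on it controls only ground identities. To conclude that the \emph{variety} $Hal_\Theta$ is generated by the algebras $Hal_\Theta(H)$ you must separate arbitrary distinct elements $p(\bar v)\neq q(\bar v)$ of a free algebra on genuine variables by a homomorphism into some $Hal_\Theta(H)$. Your proposed device --- ``viewing the extra generators as auxiliary constants'' --- amounts to substituting particular elements of $\Phi(X)$ for the $v_i$, and such a substitution can identify $p$ and $q$ even when they are distinct in the free algebra, so it does not produce the required separation. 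This step needs its own argument (a normal form for $L_\Theta$-terms, or a representation theorem for the free $Hal_\Theta$-algebras themselves) and cannot be dismissed as routine.
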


This means that identities of all $Hal_\Theta(H)$ determine the variety of Halmos algebras
$Hal_\Theta$.

If $\Theta_1$ is a subvariety in $\Theta$, then the variety $Hal_{\Theta_1}$ in $Hal_\Theta$,
generated by all $Hal_{\Theta_1}(H)$, $H \in \Theta_1$, corresponds to $\Theta_1$. Therefore,
if $H_1$ and $H_2$ are algebras from $\Theta_1$, then they are isotyped in $\Theta_1$ if and
only if they are isotyped in $\Theta$.

Note also the following general observation. It is clear that there is a canonical homomorphism of multi-sorted algebras
$$
\pi_H: \Upsilon^0\to Hal_\Theta(H),
$$
\noindent
where $\pi_H=(\pi_H^X, x\in  \Gamma), $ and $\pi_H^X$ are homomorphisms  $\pi_H^X:\Upsilon^0_X\to Bool (W(X),H)).$   This homomorphism is unique because it takes equalities to the corresponding equalities. Hence the kernel $Ker(\pi_H)$ is also the congruence of identities of the algebra $Hal_\Theta(H)$. Consider $\widetilde{\pi}=\bigcap_HKer(\pi_H)$. This is the congruence of all identities of the variety generated by all $Hal_\Theta(H)$.

Theorem \ref {th:main} now means that there is the equality $$Hal_\Theta^0=\Upsilon^0/\widetilde{\pi};\ \  \Phi(X)=\Upsilon^0_X/\widetilde{\pi}^X,$$ \noindent
where $\widetilde{\pi}^X=\pi_X. $ It can be shown (see \cite{P9}, \cite{P7}) that the congruence $\widetilde{\pi}$ corresponds to Lindenbaum-Tarski congruence.

We gave the necessary information from algebraic logic. The conditions from Subsection 1.2 are now realized.

\subsection{Category of elementary sets}
First consider the category $Set_\Theta (H)$ of affine sets
over an algebra $H$. Its objects are of the form $(X,A)$, where $A$
is an arbitrary subset in the affine space $\Hom(W(X),H)$. The
morphisms are

$$
[s] : (X,A) \to (Y,B).
$$
Here $s:W(Y) \to W(X)$ is a morphism in $\Theta^0$. The
corresponding $\tilde s: \Hom(W(X),H) \to \Hom(W(Y),H)$ should be
coordinated with $A$ and $B$ by the condition: if $\nu \in A\subset
\Hom(W(X),H)$, then $\tilde s (\nu) \in B\subset \Hom(W(Y),H)$. Then
the induced mapping $[s]:A \to B$ we consider as a morphism $(X,A)
\to (Y,B)$.

Now we define the category of algebraic sets $K_\Theta(H)$ and the
category of elementary sets $LK_\Theta(H)$. Both these categories
are full subcategories in $Set_\Theta (H)$ and  are viewed as
important invariants of the algebra $H$. We call them AG- and
LG-invariants of $H$.

The objects of  $K_\Theta(H)$ are of the form $(X,A)$, where $A$ is
an algebraic set in $\Hom(W(X),H)$. If we take for $A$ the
elementary sets, then we are getting  the category of elementary
sets $LK_\Theta(H)$. The category $K_\Theta(H)$ is a full
subcategory in $LK_\Theta(H)$.

As it was mentioned, if algebras $H_1$ and $H_2$ are isotyped, then
the categories $LK_\Theta(H_1)$ and $LK_\Theta(H_2)$ are isomorphic.

\subsection{Model Theory and algebraic logic}

Let us recall some known facts. Along with an algebra of formulas
$\Phi = \Phi(X)$ we consider the algebra of pure formulas 
$\Upsilon^0_X$. Here formulas are identified with their records. Variables
from the set $X$ take part in the records. A variable may be either
bound by a quantifier, or  free. A formula $u$ is called closed (or a
proposition) if all its variables are bound. If $u$ is a closed
formula, then its value $Val^X_H(u)$ is a unit or zero of the
algebra $Bool(W(X),H)$ (the whole space $Hom(W(X),H)$ or the empty
set of points). The set of closed formulas $T$ in $\Upsilon^0$ or in
$Hal^0_\Theta$ is called a theory. A theory is satisfiable if it has
a model. A theory $T$ is called complete if every closed formula $u$
belongs to $T$ or $\neg u$ belongs to $T$. The theory $Th(H)$ is
always complete. A theory $T$ is called categorical in the given
cardinal $\alpha$ if any two of its models of the cardinality $\alpha$
are isomorphic.

This paper is one in the series of papers related to universal
algebraic geometry \cite{KMR}, \cite{BMR},\cite{MR},\cite{P6},\cite{P2},\cite{P4},\cite{P8},etc.
As we have mentioned, in the frameworks of the
considered theory there arise various problems close to algebra and
Model Theory. They seem to be new, although some of them may look
simple to specialists (I mean specialists in Model Theory).

The following remarks will concern pure logic and algebraic logic in
Model Theory. Model Theory is a combination of
syntax and semantics. Syntax (languages) play an essential role
comparable to that of semantics (models). One can speak of syntactic
structure related to languages and of theories in languages. All
this is applicable to concrete mathematical problems.

Algebraic logic is not just a syntax, but nevertheless it works
in Model Theory. It is helpful to treat homomorphisms $Val^X_H
:\Phi(X) \to Bool(W(X),H)$ interrelating formula and its value.
Elementary theory can be presented as a kernel of such homomorphism.
In concern to theory of types one can speak of logical kernel of a
point. This kernel $LKer(\mu)$ automatically turns to be ultrafilter
in the algebra of formulas $\Phi =\Phi(X)$. There are a lot of other
reasons to apply algebraic logic in Model Theory.

Let us once more name open problems:

\begin{problem}
Consider infinite logically noetherian algebras in different varieties
$\Theta$.
\end{problem}

\begin{problem}
Consider separable algebras in different varieties $\Theta$.
\end{problem}

\begin{problem}
What are the varieties $\Theta$ such that free algebras in $\Theta$ are logically separable.
\end{problem}

\begin{problem}
Construct automorphically finitary algebras $H$ in different varieties $\Theta$. In other words, we want to get algebras $H$ in $\Theta$ such that   for every finite $X$ there is a finite number
of $Aut(H)$-orbits in the space $Hom(W(X),H)$.
\end{problem}

\begin{problem}
A general problem: types and isotypeness in multi-sorted algebras.
Consider this problem for the variety of representations of
groups $Rep-K$.
\end{problem}

Let us note that in \cite{PZ} there was  Problem 1.23  concerning
non-isomorphic isotyped  abelian groups. We see now that this problem can be
easily solved (Proposition \ref{prop:inf_dim}). In fact, the following problem is actual for abelian
groups:

\begin{problem}
Find conditions when two abelian groups are elementary equivalent but not isotyped.
\end{problem}

Finally,

\begin{problem}
Which noetherian groups are not logically noetherian.
\end{problem}

In all the cases we mean special  $\Theta$ and $H\in
\Theta$. For instance, $\Theta$ may be modules, vector spaces, linear
algebras, Lie, associative algebras (finitely dimension or not), etc.
The concrete always  illuminates the general.

\subsection{Acknowledgements}
The author is pleased to thank Zlil Sela who noticed that the logical kernel of a point is a type,
and Alexei Miasnikov who payed my attention on the fact that the logic-geometrical equivalence of algebras is
the same as isotypeness of algebras. I am also grateful to my colleagues Yu.Ershov, E.Katsov, V.Remeslennikov, E.Rips, A.Olshansky, G.Zhitomirski,  B.Zilber
 and others for the constant support.

The paper was written in Jurmala, Latvia. I had perfect working conditions thanks to my
good friends Anna Efimenko and Dima Koval, and Alla and Igor Duman. The manuscript was typed and prepared
by E.\& T.Plotkin.


\end{document}

As it was mentioned above, the category $\Hal^0_\Theta$ plays in
logical geometry the same role as the category $\Theta^0$ does in
universal algebraic geometry.

\bibitem{BM1} G.Baumslag, A.~Myasnikov, V.~Remeslennikov, Algebraic geometry
over groups I,  J. of Algebra, {\bf 219:1} (1999) 16 -- 79.

\bibitem{MR2} A.~Myasnikov, V.~Remeslennikov, Algebraic geometry
over groups II, Logical foundations J. of Algebra, {\bf 234:1}
(2000) 225 -- 276.

\bibitem{Pl1} B.~Plotkin,  Algebraic logic, varieties of
algebras and algebraic varieties, in Proc. Int. Alg. Conf., St.
Petersburg, 1995,  St.Petersburg,  1999, p. 189 -- 271.

\bibitem{Pl2} B.~Plotkin, Varieties of algebras and algebraic
varieties,  Israel Math. Journal, {\bf 96:2} (1996),  511 -- 522.

\bibitem{Pl3} B.~Plotkin, Varieties of algebras and algebraic varieties.
Categories of  algebraic varieties. Siberian Advanced Mathematics,
       Allerton Press, {\bf 7:2} (1997), 64 -- 97.

\bibitem{Pl4} B.~Plotkin, Some notions of algebraic geometry in
universal algebra,  Algebra and Analysis, {\bf 9:4} (1997), 224 --
248, St.Peterburg Math. J., {\bf 9:4}, (1998)  859 -- 879.


\bibitem{Pl5} B.~Plotkin, Zero divisors in group-based algebras.
Algebras without zero divisors, Bul. Acad. Sci. Mold. Mat., {\bf 2},
(1999), 67 -- 84.

\bibitem{Pl6} B.~Plotkin, Seven lectures on the universal algebraic geometry,
Preprint,(2002),  Arxiv:math, GM/0204245, 87pp.

\bibitem{Pl7} B.~Plotkin, Infinitary quasi-identities and infinitary
quasivarieties, Proc. Latvian Acad. Sci., Section B, {\bf 57} N.3/4,
(2003)  111 -- 112.

\bibitem{Pl8} B.~Plotkin,  {\it  Problems in algebra inspired by universal algebraic
geometry,} Fundamental and Applied mathematics, {\bf 10:3} (2004),
p. 181 -— 197, and { http://arxiv.org/ abs/ math. GM/0406101},
(2004) 21 pp.



\bibitem{Pl9} B.~Plotkin,  {\it Algebraic geometry in the variety $Mod-K$},
Manuscript.

\bibitem{Pl10} B.~Plotkin, {\it Algebras with the same algebraic geometry}, Proceedings of
the International Conference on Mathematical
 Logic, Algebra and Set Theory,  dedicated to 100 anniversary
of P.S.Novikov, Proceedings of the Steklov Institute of Mathematics,
MIAN, {\bf v.242}, (2003), 176 -- 207, and { http://arxiv. org/
math. math.GM/0210194}.


\bibitem{Pl12} B.~Plotkin,{\it Geometrical equivalence, geometrical
similarity, and geometrical compatibility of algebras}, Zapiski
Nauch. Sem. POMI, {\bf 330} (2006), 201 -- 222.

\bibitem{Pl13} B.~Plotkin, {\it Algebras with the same logic},
Manuscript.

\bibitem{PPT} B.~Plotkin, E.~Plotkin, A.~Tsurkov, "Geometrical equivalence of
groups", Communications in Algebra, {\bf 27:8} (1999), 4015 -- 4025.

\bibitem{PlTs} B.~Plotkin, A.~Tsurkov, Action type geometrical
equivalence of representations of groups, Algebra and Discrete
Mathematics, {\bf 4},  (2005), 48 –- 79, see also
http://arxiv.org/abs/math.RT/0501337, (2004).

\bibitem{PV} B.~Plotkin,  S.~Vovsi, {\it Varieties of group representations. General Theory, connections and
applications}, Riga "Zinatne", 1983 (in Russian).

\bibitem{PlZh1} B.~Plotkin, G.~Zhitomirski, {\it On automorphisms of categories of universal algebras},
International Journal of Algebra and Computations,
 (2007), see Arxiv: math.CT/0411408, 2004.

\bibitem{PlZh2} B.~Plotkin, G.~Zhitomirski, {\it On automorphisms of
categories of free algebras of some varieties}, Journal of Algebra,
{\bf 306:2}, (2006), 344 -- 367. 

\end{thebibliography}


\begin{thebibliography}{BGK2}



\bibitem[Ba]{Ba}
R. Baer, Engelsche Elemente Noetherscher Gruppen, Math. Ann.  133
(1957) 256--270.

\bibitem[BMR]{BMR} G.Baumslag, A.~Myasnikov, V.~Remeslennikov, Algebraic geometry
over groups I,  J. of Algebra, {\bf 219:1} (1999) 16 -- 79.


\bibitem[CK]{CK}
C.C.~Chang, H.J. Keisler, {\it Model Theory}, North-Holland Publ. Co., (1973).

\bibitem[G]{G}
R.Grossberg, {\it Classification theory for abstract elementary
classes}, Logic and Algebra, ed. Yi Zhang, Contemporary Mathematics,
{\bf 302}, AMS, (2002), 165--204.



\bibitem[Ha]{Ha} P.Hall, {\it Some constructions for locally finite groups},
J. London Math. Soc. 34 (1959), 305--319.

\bibitem[H]{H}
P.R.~Halmos, {\it Algebraic logic}, New York, (1969).

\bibitem[Ho]{Ho}W. Hodges, {\it Model theory}, Encyclopedia of Mathematics and its
Applications 42, Cambridge University Press, Cambridge, 1993.


\bibitem[HMT]{HMT}
L.~Henkin, J.D.~Monk, A.~Tarski, {\it Cylindric Algebras},
North-Holland Publ. Co., (1985).



\bibitem[HNN]{HNN} G.Higman, B.Neumann, H. Neumann  {\it Embedding Theorems for Groups}, Jornal of the London Mathematical Society {(\bf 24:4)}, (1949), 247–-254.

\bibitem[Ke] {Ke} O.Kegel,  {\it Regular limits of infinite symmetric groups}, to appear

\bibitem[Ku]{Ku} A. G. Kurosh, {\it Theory of groups}, 3rd. ed., "Nauka", Moscow, 1967; English transl. of 2nd ed., vols. I, II,. Chelsea, New York, 1960.

\bibitem[KMR]{KMR} A. Kvaschuk, A. Myasnikov, V.Remeslennikov, {\it Algebraic geometry over
groups III: Elements of model theory}, Journal of Algebra, {\bf 288:1},
(2005), p.78-98.



\bibitem[Ma]{Ma}
D.~Marker, {\it Model Theory: An Introduction}, Springer Verlag, 2002, 360pp.




\bibitem[MR]{MR} A.~Myasnikov, V.~Remeslennikov, Algebraic geometry
over groups II, Logical foundations J. of Algebra, {\bf 234:1}
(2000) 225 -- 276.


 \bibitem[Ne]{Ne} B.H.Neumann, {\it An essay on free products of groups with amalgamation},
Philos. Trans. Roy. Soc. London Math. {\bf 246} (1954) 503--554.




\bibitem[P1]{P1}
B.~I.~Plotkin, {\it Radical groups}, Mat.\ Sb.\ N.S.\ {\bf 37(79)}
(1955), 507--526; English transl.\ in Amer.\ Math.\ Soc.\ Transl.\
(2) {\bf 17} (1961), 9--28.

\bibitem[P2]{P2}
B.~I.~Plotkin, {\it Radical and nil-elements in groups}, Izv. Vuzov,
Math., (Russian), (1958), {\bf 1}, 130--135.



\bibitem[P3]{P3} B.Plotkin,  Algebraic logic, varieties of
algebras and algebraic varieties, in Proc. Int. Alg. Conf., St.
Petersburg, 1995,  St.Petersburg,  1999, p. 189 -- 271


\bibitem[P4]{P4} B.~Plotkin, {\it Algebras with the same algebraic geometry}, Proceedings of
the International Conference on Mathematical
 Logic, Algebra and Set Theory,  dedicated to 100 anniversary
of P.S.Novikov, Proceedings of the Steklov Institute of Mathematics,
MIAN, {\bf v.242}, (2003), 176 -- 207, and { http://arxiv. org/
math. math.GM/0210194}.

\bibitem[P5]{P5} B.~Plotkin, Some notions of algebraic geometry in
universal algebra,  Algebra and Analysis, {\bf 9:4} (1997), 224 --
248, St.Peterburg Math. J., {\bf 9:4}, (1998)  859 -- 879.

\bibitem[P6]{P6} B.~Plotkin, Seven lectures on the universal algebraic geometry,
Preprint,(2002),  Arxiv:math, GM/0204245, 87pp.


\bibitem[P7]{P7}
B.~Plotkin, {\it Algebraic geometry in First Order Logic},
Sovremennaja Matematika and Applications {\bf 22} (2004), 16--62.
Journal of Math. Sciences, {\bf 137}, n.5, (2006), 5049-- 5097.
http:// arxiv.org/ abs/ math GM/0312485.


\bibitem[P8]{P8}
B.~Plotkin, {\it Some results and problems related to universal
algebraic geometry,} International Journal of Algebra and
Computation,  {\bf 17(5/6)} (2007), 1133-1164.

\bibitem[P9]{P9} B. Plotkin, "Universal algebra, algebraic logic and databases"
       Kluwer Acad. Publ., 1994.


\bibitem[P10]{P10}
B.~Plotkin, {\it Notes on Engel groups and Engel elements in groups. Some generalizations}, Izv. Ural. Univ. Ser. Mat. Mekh. 36 (7) (2005) 153–166; available at http://arXiv.org/math.GR/0406100.

\bibitem[PZ]{PZ}
B.~Plotkin, G.Zhitomirski, {\it Some logical invariants of
algebras and logical relations between algebras}, Algebra and Analysis, {\bf 19:5} (2007), 214 --
245, St.Peterburg Math. J., {\bf 19:5}, (2008)  859 -- 879.




\end{thebibliography}
\end{document}

REGULAR LIMITS OF INFINITE SYMMETRIC GROUPS, Otto H. Kegel

7] Neumann, B.H.: Some remarks on in¯nite groups.
J. London Math. Soc. 12 (1937) 120{127
[8] Neumann, B.H.: An essay on free products of groups with amalgamation.
Philos. Trans. Roy. Soc. London Math. 246 (1954) 503{554

[3] Hall, P.: Some constructions for locally ¯nite groups.
J. London Math. Soc. 34 (1959) 305{319
[4] Higman, G., Neumann, B.H. and Neumann, H.: Embedding theorems for groups.
J. London Math. Soc. 24 (1949) 247{254
[5] Higman, G., Scott, E.: Existentially closed groups. Oxford University Press, 1988

\subsection{Examples}
Consider an example when the algebras $H_1$ and $H_2$ are isotyped.

\begin{proposition}
Let algebras $H_1$ and $H_2$ be infinitely dimension vector spaces over a field $P$. Then
$H_1$ and $H_2$ are isotyped.
\end{proposition}

\begin{proof}

Take a point $\mu:W(X) \to H_1$ and check that there is $\nu:W(X)
\to H_2$ such that $LKer(\mu) = LKer(\nu)$. let $X=\{x_1, \ldots ,
x_n \}$. Take $X'=\{x, X \}$. Besides, consider a set $X_1 = \{ x,
x_2, \ldots , x_n \}$. We have algebras $W=W(X)$, $W'=W(X')$ and
$W_1=W(X_1)$. The variable $x$ is auxiliary. Take further $\mu: W
\to H_1$, $\mu ': W' \to H_1$ and $\mu_1: W_1 \to H_1$. Here $\mu '$
induces $\mu$ and $\mu_1$.

Let now $a= \mu '(x)$, $a_i= \mu (x_i)= \mu '(x_i)$, $i=1, \ldots,
n$. Denote by $A'$ a subalgebra in $H_1$, generated by the elements
$a, a_1, \ldots, a_n$. Define an isomorphism $\alpha ': A' \to B'$,
where $B'$ is a subalgebra in $H_2$, and let $\alpha '(a)=b$,
$\alpha '(a_i)=b_i$. The subalgebra $B'$ is generated by the
elements $b, b_1, \ldots, b_n$. in the algebras $A'$ and $B'$ we
have subalgebras $A= \{a_1, \ldots, a_n \}$, $A_1= \{a, a_2, \ldots,
a_n \}$ and $B= \{b_1, \ldots, b_n \}$, $B_1= \{b, b_2, \ldots, b_n
\}$ respectively. Proceed from $\nu ' :W' \to H_2$ with $\nu '(x)
=b$, $\nu '(x_i) =\nu(x_i) =b$. We have also $\alpha : A \to B$ and
$\alpha_1 : A_1 \to B_1$ where $\alpha$ and $\alpha_1$ are induced
by the isomorphism $\alpha '$. Here $\mu \alpha = \nu$, $\mu_1
\alpha_1 = \nu_1$,$\mu ' \alpha ' = \nu '$, where $\nu ':W' \to H_2$
induces $\nu$ and $\nu_1$ and everything is concerned with $B'$, $B$
and $B_1$.

We call a formula $u \in \Phi= \Phi(X)$ correct, if for any $\mu$
and $\nu= \mu \alpha$ the inclusion $\mu \in Val_{H_1}(u)$, $u \in
LKer(\mu)$ holds if and only if $u \in LKer(\nu)$, $\nu \in
Val_{H_2}(u)$. We intend to check that every formula $u$ is correct.
All the equalities are correct. If $u$ is correct, then its negation
is correct. If $u_1$ and $u_2$ are correct, then $u_1 \vee u_2$ and
$u_1 \wedge u_2$ are.

Check now that if $u$ is correct, then $\exists x_1 u$ also is.
Equally well we could take $\exists x_i u$.

So, let $\mu \in Val_{H_1}(\exists x_1 u) = \exists x_1
Val_{H_1}(u)$. Here $\mu_1 \in Val_{H_1}(u)$ with $\mu (y) = \mu _1
(y)$ for each $y \not = x$, $y \in X$.

Using $\mu_1 \alpha_1 = \nu_1$ and the condition on $u$, we get
$\nu_1 \in Val_{H_2}(u)$ and $\nu_1$ and $u$ differ only on the
variable $x$. Hence, $\nu \in \exists x_1 Val_{H_2}(u) =
Val_{H_2}(\exists x_1 (u)$. An auxiliary variable $x$ was used only
in one inductive step, concerned with the quantifier.

Let we have $s: W(Y) \to W(X)$ and $s_* : \Phi(Y) \to \Phi(X)$ and
let $u$ be an arbitrary formula in $\Phi(X)$. Given $H_1$ and $H_2$,
$\mu \alpha = \nu$, where  $\mu: W(X) \to H_1$, $\nu: W(X) \to H_2$,
$\mu \in Val_{H_1}(s_* u) = s_* Val_{H_1}(u)$, $s \mu \in
Val_{H_1}(u)$. Similarly, $s \nu \in Val_{H_2}(u)$ and $\nu \in s_*
Val_{H_2}( u)= Val_{H_2}(s_* u)$. This implies that all formulas are
correct.

Again, given $\mu$ and $\nu$, $\nu = \mu \alpha$, let $u \in
LKer(\mu)$. Then $u \in LKer(\nu)$. The opposite direction is
similar. Hence, $LKer(\mu) = LKer(\nu)$, the points $\mu$ and $\nu$
are isotyped. We have $\nu$ for $\mu$ and $\mu$ for $\nu$. The
algebras $H_1$ and $H_2$ are isotyped.
\end{proof}

In particular, for two free groups $H_1$ and $H_2$ with the finite
ranges the conditions hold true, therefore they are isotyped. It is
also known that they are isomorphic \cite{}. Let now $H_1$ be a free
group and $H_2$ be locally free. The conditions hold true, the
groups are isotyped, but they are not isomorphic. These remarks
should be taken into account in the investigation of the Problem 2
in the case $\Theta =Grp$. As an example we can take an infinite
cyclic group for $H_1$ and locally cyclic not twisted group for
$H_2$. They are isotyped, but not isomorphic, if $H_2$ is not
cyclic. Such examples give infinitely dimension non-isomorphic
vector spaces.

Let $P$ be an algebraically closed field and $L$ its extension of
greater cardinality. Consider $Hom(W,P)$ and $Hom(W,H)$. Formulas
$u$ hold in the first one if and only if they hold in the second
one. Hence, $LKer(\mu)= LKer(\nu)$ for $\mu:W \to P$ and $\nu:W \to
H$. For $u$ we have $f(x_1, ldots , x_n) \equiv 0$. $P$ and $L$ are
isotyped but not isomorphic. Possibly such $f(x_1, ldots , x_n)
\equiv 0$ are closed up to $LKer (\mu)$.

^ rOGER c. lYNDON AND pAUL e. sCHUPP. cOMBINATORIAL gROUP tHEORY. sPRINGER-vERLAG, nEW yORK, 2001. "cLASSICS IN mATHEMATICS" SERIES, REPRINT OF THE 1977 EDITION. isbn-13: 9783540411581; cH. iv. fREE pRODUCTS AND hnn eXTENSIONS.

[Chang and Keisler 1990] C. C. Chang and H. J. Keisler, Model theory, Third ed.,
Studies in Logic and the Foundations of Math. 73, North-Holland, Amsterdam,
1990

[Hodges 1993] W. Hodges, Model theory, Encyclopedia of Mathematics and its
Applications 42, Cambridge University Press, Cambridge, 1993.

We show that such
$H$ is logically perfect.
Fix a set $X=\{x, x_1, \ldots , x_n\}$. Consider
$Hom(W(X),H)$, where $W(X)$ is the free abelian group over $X$.
Proceed also from the algebra of formulas $\Phi =\Phi(X)$. Fix a
sequence $m_1, \ldots , m_n$ where all $m_i$ are natural numbers.
Associate a formula $u=u(m_1, \ldots , m_n) \in \Phi$ of the form
$(\exists x (x^{m_1}=x_1 \wedge \ldots \wedge x^{m_n}=x_n))$ to any
sequence of such kind. All $a_1, \ldots , a_n\in H$ such that there is $a\in H$ with
the condition $a^{m_1}=a_1 , \ldots , a^{m_n}=a_n$ satisfy this
formula. Along with the set $X$ consider its subset $X_0 = \{x_1,
\ldots , x_n\}$. To any $\mu: W(X) \to H$ corresponds its
restriction $\mu_0: W(X_0) \to H$. Take $\mu, \nu \in Val_H^X(u)$.
We have $(\mu(x), \mu(x_1), \ldots , \mu(x_n))=(a, a_1, \ldots ,
a_n) =(a, \bar a)$ where $\bar a =(a_1, \ldots , a_n)$. 

Similarly, $(\nu(x), \nu(x_1), \ldots , \nu(x_n))=(b, b_1, \ldots ,
b_n) =(b, \bar b)$ with $\bar b =(b_1, \ldots , b_n)$. We have also
$a^{m_1} = a_1$, $a^{m_n} = a_n$, $b^{m_1} = b_1$, $b^{m_n} = b_n$.
Take $A=\{a\}$, $B=\{b\}$. Both are infinite cyclic groups and we
have an isomorphism $\alpha : A \to B$, $\alpha(a) =b$. Now
$\alpha(a^{m_i}) =\alpha(a_i)= \alpha(a)^{m_i} = b^{m_i} =b_i$.

Let further $A_0, \ B_0$ be  subgroups generated by all $a_i$ and
$b_i$ respectively. Isomorphism $\alpha$ induces the isomorphism
$\alpha ' : A_0 \to B_0$ correlated with the transitions $a_i \to
b_i$. This means that $\bar a \tau \bar b$.

Conversely, let $\bar a \tau \bar b$. The subgroups $A_0$ and $B_0$
are generated by all $a_i$, $b_i$ respectively. Both of them are
cyclic: $A_0=\{a\}$, $B_0=\{b\}$. For the given isomorphism $\alpha$
we set $\alpha(a)=b$ and $\alpha(a_i)=b_i$. We have also
$a_i=a^{m_i}$ for some $m_i$, and then $b_i=b^{m_i}$. Therefore the
sequences $(a,a_1, \ldots , a_n)$ and $(b,b_1, \ldots , b_n)$
determine $\mu$ and $\nu$ belonging to $Val_H^X(u)$.

Now we prove that the group $H$ is logically perfect. 
We
consider $H$ in additive notation. Here multiplication on an
arbitrary nonzero element in $H$ is an automorphism. Take $u=u(m_1,
\ldots , m_n)$. Let $\mu$ and $\nu$ be two points from
$Val_H^{X}(u)$ with the corresponding sequences $(a,a_1, \ldots ,
a_n)$, $m_i a = a_i$ and $(b,b_1, \ldots , b_n)$, $m_i b = b_i$.
Take $\alpha = b / a$, $b = \alpha a$. Then $b_i = \alpha a_i$. We
have $\alpha (a_i)=\alpha(m_i a) = m_i(\alpha a) = m_i b = b_i$. It
means that the automorphism $\alpha$ transforms $\mu_0$ into
$\nu_0$. Then $\tau =\rho_0=\rho$. In particular, cosets  of the
relation $\rho_0=\tau$ are elementary sets.

V etoj statje glavnoe vnimanie udeljaetsja algebre $H$ from $\Theta$ s tochki zrenija svjazannyh s nej logiki i geometrii.  S drugoj storony obychno v AG i LG na pervom meste logika i geometreija, a sama algebra vtorichna.